\newtheorem{theorem}{Theorem}
\newtheorem{fact}{Fact}
\newtheorem{proposition}{Proposition}
\newtheorem{remark}{Remark}
\newtheorem{conjecture}{Conjecture}
\newtheorem{corollary}{Corollary}
\DeclareMathOperator{\Sym}{Sym}
\DeclareMathOperator{\Aut}{Aut}
\begin{document}

\chapter{The Random Graph}
\label{ch32:chap32}

%

\textbf{Summary.} Erd\H{o}s and R\'{e}nyi showed the paradoxical
result that there is a unique (and highly symmetric) countably
infinite random graph. This graph, and its automorphism group, form
the subject of the present survey.

\section{Introduction}
\label{ch32:sec2.1}

In 1963, Erd\H{o}s and R\'{e}nyi \cite{ch32:bib18} showed:
\begin{theorem}\label{ch32:them1.1} 
There exists a graph $R$ with the following
property. If a countable graph is chosen at random, by selecting
edges independently with probability $\frac{1}{2}$ from the set of
$2$-element subsets of the vertex set, then almost surely (i.e., with
probability $1$), the resulting graph is isomorphic to $R$.
\end{theorem}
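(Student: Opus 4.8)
The plan is to isolate the single combinatorial feature that forces both existence and uniqueness: the \emph{extension property} (sometimes called the one-point extension property). Say that a graph $G$ has the extension property if for every pair of finite disjoint sets of vertices $U, V \subseteq V(G)$ there is a vertex $z \notin U \cup V$ that is adjacent to every vertex of $U$ and to no vertex of $V$. I would first show that a random countable graph almost surely has this property, and then show that any two countable graphs with this property are isomorphic. Together these yield a graph $R$, unique up to isomorphism, which the random construction produces with probability $1$.

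For the almost-sure half, fix a pair $(U,V)$ of finite disjoint vertex sets and enumerate the remaining vertices $z_1, z_2, \dots$. For each $z_i$, the event that $z_i$ is adjacent to all of $U$ and to none of $V$ has probability $2^{-(|U|+|V|)} > 0$, and because edges are chosen independently these events are mutually independent across distinct $z_i$. Hence the probability that \emph{no} $z_i$ witnesses the extension for $(U,V)$ is $\prod_i \bigl(1 - 2^{-(|U|+|V|)}\bigr) = 0$. There are only countably many pairs $(U,V)$, so the union of these null events is still null, and with probability $1$ the random graph satisfies the extension property for every $(U,V)$ simultaneously. This is a routine Borel--Cantelli / countable-union argument; the only care needed is to keep the finitely many constraints imposed by $U$ and $V$ genuinely independent across the infinitely many candidate vertices.

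For uniqueness I would run a back-and-forth argument. Let $G$ and $H$ both satisfy the extension property, enumerate their vertices, and build an isomorphism as the union of an increasing chain of finite partial isomorphisms $f_0 \subseteq f_1 \subseteq \cdots$. At an odd stage I take the first not-yet-matched vertex $g$ of $G$; its already-matched neighbours and non-neighbours determine disjoint finite sets $U, V$ in $H$, and the extension property in $H$ supplies a vertex to which $g$ can be mapped while preserving all adjacencies. At an even stage I do the symmetric thing starting from an unmatched vertex of $H$, using the extension property in $G$. Alternating the two directions guarantees that every vertex of $G$ and of $H$ is eventually in the domain, respectively range, so $\bigcup_n f_n$ is a graph isomorphism.

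I expect the main obstacle to be bookkeeping rather than conceptual: verifying that at each stage the sets $U$ and $V$ fed to the extension property are finite and disjoint, and that the newly chosen vertex can be taken distinct from all previously used ones — which the extension property permits, since one may enlarge $V$ by the finitely many already-used vertices one wishes to avoid. Once this is checked, categoricity follows, and combining it with the probabilistic step shows that the random graph is almost surely isomorphic to the unique countable graph with the extension property, which we name $R$.
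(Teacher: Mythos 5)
Your proposal is correct and follows essentially the same route as the paper: the extension property you isolate is exactly the paper's property ($\ast$), the almost-sure half is the same countable-union-of-null-sets computation, and the uniqueness half is the same back-and-forth construction. (Your worry about avoiding previously used vertices is already handled automatically, since at each stage $U\cup V$ is the entire current range and the witness $z$ is necessarily distinct from every vertex in $U\cup V$.)
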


This theorem, on first acquaintance, seems to defy common sense --- a
random process whose outcome is predictable. Nevertheless, the
argument which establishes it is quite short. (It is given below.)
Indeed, it formed a tailpiece to the paper of Erd\H{o}s and
R\'{e}nyi, which mainly concerned the much less predictable world of
finite random graphs. (In their book \emph{Probabilistic Methods in
Combinatorics}, Erd\H{o}s and Spencer \cite{ch32:bib19} remark that
this result ``demolishes the theory of infinite random graphs.'')

I will give the proof in detail, since it underlies much that
follows. The key is to consider the following property, which a
graph may or may not have:
\begin{itemize}
\item[($\ast$)] \emph{Given finitely many distinct vertices $u_1, \ldots, u_m, v_1, \ldots, v_n$, there exists a vertex $z$ which is adjacent to $u_1,\ldots, u_m$ and nonadjacent to $v_1,\ldots,v_n$.}
\end{itemize}
Often I will say, for brevity, ``$z$ is correctly joined''.
Obviously, a graph satisfying ($\ast$) is infinite, since $z$ is
distinct from all of $u_1, \ldots, u_m, v_1, \ldots, v_n$. It is
not obvious that any graph has this property. The theorem follows
from two facts:

\begin{fact}\label{ch32:subsec2.1.1}
With probability $1$, a countable random graph satisfies ($\ast$).
\end{fact}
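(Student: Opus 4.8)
The plan is to prove the statement one configuration at a time and then assemble the pieces by a countable union. First I would fix a single configuration: distinct vertices $u_1, \ldots, u_m, v_1, \ldots, v_n$, and call the event that no correctly joined vertex exists the \emph{bad event} for this configuration. To estimate its probability, choose an infinite sequence of ``witness'' vertices $w_1, w_2, \ldots$ disjoint from the fixed set (possible since the vertex set is infinite). For each $w_k$, the event that $w_k$ is adjacent to every $u_i$ and nonadjacent to every $v_j$ depends only on the $m+n$ edges joining $w_k$ to the fixed vertices; these edges are present or absent independently with probability $\frac{1}{2}$, so this event has probability exactly $2^{-(m+n)}$.

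Second, because distinct witnesses $w_k$ involve pairwise disjoint edge sets, the events ``$w_k$ is correctly joined'' are mutually independent. Hence the probability that none of $w_1, \ldots, w_N$ is correctly joined is $\bigl(1 - 2^{-(m+n)}\bigr)^N$, which tends to $0$ as $N \to \infty$. Therefore the bad event for this fixed configuration has probability $0$; equivalently, with probability $1$ some correctly joined vertex $z$ exists for this particular choice of $u_i$ and $v_j$.

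Finally I would remove the dependence on the fixed configuration. Since the vertex set is countable, there are only countably many finite configurations $(u_1, \ldots, u_m, v_1, \ldots, v_n)$. The event that ($\ast$) fails is the union of the bad events over all these configurations --- a countable union of null sets, hence itself null by countable subadditivity of the probability measure. Thus with probability $1$, property ($\ast$) holds simultaneously for every configuration, which is exactly the assertion of the Fact.

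I expect the only delicate point to be the logical structure rather than any computation. Property ($\ast$) is a ``for all configurations, there exists a witness'' assertion, so one cannot bound its failure probability in a single stroke; the argument succeeds precisely because the measure is countably additive and there are only countably many configurations, so each individually-null bad event can be safely unioned away. The independence used in the geometric estimate is clean because each witness vertex consumes its own private set of edges, and this is what makes the failure probability decay to zero.
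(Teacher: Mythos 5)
Your proposal is correct and follows essentially the same route as the paper: fix a configuration, use the independence of the witness events (each with probability $2^{-(m+n)}$) to show the failure probability is at most $(1-2^{-(m+n)})^N \to 0$, then take a countable union over all configurations. The only difference is expository — you make the disjointness of the edge sets behind the independence claim explicit, which the paper leaves implicit.
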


\begin{fact}\label{ch32:subsec2.1.2} 
Any two countable graphs satisfying ($\ast$) are isomorphic.
\end{fact}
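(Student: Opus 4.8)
The plan is to use a back-and-forth argument, the standard device for proving two countable structures isomorphic by building the isomorphism one vertex at a time from both sides simultaneously. Let $\Gamma_1$ and $\Gamma_2$ be two countable graphs each satisfying ($\ast$), and fix enumerations of their vertex sets as $a_1, a_2, \ldots$ and $b_1, b_2, \ldots$ respectively. I would construct an increasing chain $f_0 \subseteq f_1 \subseteq \cdots$ of finite partial isomorphisms, where each $f_k$ is a bijection between a finite set of vertices of $\Gamma_1$ and a finite set of vertices of $\Gamma_2$ that both preserves and reflects adjacency, and take the desired isomorphism to be the union $f = \bigcup_k f_k$.

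The heart of the matter is the single extension step, and this is precisely what ($\ast$) is tailored to supply. Suppose $f_k$ maps $\{x_1, \ldots, x_r\}$ onto $\{y_1, \ldots, y_r\}$ and I wish to bring a new vertex $x$ of $\Gamma_1$ into the domain. Partition the $x_i$ according to whether they are adjacent to $x$; let $U$ consist of the images in $\Gamma_2$ of the neighbours of $x$ among the $x_i$, and $V$ the images of the non-neighbours. Applying ($\ast$) inside $\Gamma_2$ to the finite sets $U$ and $V$ produces a vertex $z$ adjacent to every vertex of $U$ and to no vertex of $V$; setting $f_{k+1}(x) = z$ extends the partial isomorphism, since by construction $z$ has exactly the same adjacencies to $y_1, \ldots, y_r$ that $x$ has to $x_1, \ldots, x_r$. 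The symmetric step, bringing a prescribed vertex of $\Gamma_2$ into the range, invokes ($\ast$) inside $\Gamma_1$ in the same manner.

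To guarantee that the limit map is a bijection and not merely an embedding, I would alternate the two kinds of step: at even stages I adjoin the least-indexed vertex $a_i$ not yet in the domain, and at odd stages the least-indexed $b_j$ not yet in the range. This ``going back and forth'' ensures that every $a_i$ eventually enters the domain and every $b_j$ eventually enters the range, so $f$ is total on $\Gamma_1$ and surjects onto $\Gamma_2$. As each $f_k$ preserves and reflects adjacency and the $f_k$ are nested, the union $f$ is an adjacency-preserving bijection, that is, an isomorphism $\Gamma_1 \to \Gamma_2$.

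I expect the extension step to be routine once ($\ast$) is in hand; the only real subtlety is the bookkeeping that forces the construction to terminate in a \emph{total} bijection. A pure ``forth'' construction would yield merely an embedding of $\Gamma_1$ into $\Gamma_2$ and might omit vertices of $\Gamma_2$, so the alternation is essential rather than cosmetic. I would also remark at the outset that ($\ast$) forces both graphs to be infinite, so the enumerations are genuinely infinite and the construction runs through all of the natural numbers without stalling.
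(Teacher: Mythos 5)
Your proposal is correct and follows essentially the same route as the paper: the single-vertex extension step via ($\ast$) in the target graph, followed by the back-and-forth alternation over fixed enumerations to ensure the union is a total bijection. The paper's proof differs only in minor bookkeeping details (e.g., choosing the least-indexed witness to avoid the Axiom of Choice).
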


\begin{proof}[Proof (of Fact~\ref{ch32:subsec2.1.1})]
We have to show
that the event that ($\ast$) fails has probability $0$, i.e., the set
of graphs not satisfying ($\ast$) is a null set. For this, it is
enough to show that the set of graphs for which ($\ast$) fails for
some given vertices $u_1, \ldots, u_m, v_1, \ldots, v_n$ is null.
(For this deduction, we use an elementary lemma from measure theory:
the union of countably many null sets is null. There are only
countably many values of $m$ and $n$, and for each pair of values,
only countably many choices of the vertices $u_1, \ldots, u_m, v_1,
\ldots, v_n$.) Now we can calculate the probability of this set.
Let $z_1,\ldots,z_N$ be vertices distinct from $u_1, \ldots, u_m,
v_1, \ldots, v_n$. The probability that any $z_i$ is not correctly
joined is $1- \frac{1}{2^{m+n}}$; since these events are independent
(for different $z_i$), the probability that none of $z_1,\ldots,z_N$
is correctly joined is $(1 - \frac{1}{2^{m+n}})^N$. This tends to $0$
as $N \rightarrow \infty$; so the event that no vertex is correctly
joined does have probability $0$.
\end{proof}

Note that, at this stage, we know that graphs satisfying ($\ast$)
exist, though we have not constructed one --- a typical
``probabilistic existence proof''. Note also that ``probability
$\frac{1}{2}$'' is not essential to the proof; the same result holds
if edges are chosen with fixed probability $p$, where $0 < p < 1$.
Some variation in the edge probability can also be permitted.

\begin{proof}[Proof (of Fact~\ref{ch32:subsec2.1.2})]
Let $\Gamma_1$ and
$\Gamma_2$ be two countable graphs satisfying ($\ast$). Suppose that
$f$ is a map from a finite set $\{x_1,\ldots, x_n\}$ of vertices of
$\Gamma_1$ to $\Gamma_2$, which is an isomorphism of induced
subgraphs, and $x_{n+1}$ is another vertex of $\Gamma_1$. We show
that $f$ can be extended to $x_{n+1}$. Let $U$ be the set of
neighbours of $x_{n+1}$ within $\{x_1,\ldots, x_n\}$, and $V =
\{x_1,\ldots, x_n\} \setminus U$. A potential image of $x_{n+1}$
must be a vertex of $\Gamma_2$ adjacent to every vertex in 
$f(U)$ and nonadjacent to every vertex in $f(V)$. Now property
($\ast$) (for the graph $\Gamma_2$) guarantees that such a vertex
exists.

Now we use a model-theoretic device called ``back-and-forth''. (This
is often attributed to Cantor \cite{ch32:bib11}, in his
characterization of the rationals as countable dense ordered set
without endpoints. However, as Plotkin \cite{ch32:bib41} has shown,
it was not used by Cantor; it was discovered by Huntington
\cite{ch32:bib32} and popularized by Hausdorff \cite{ch32:bib25}.)

Enumerate the vertices of $\Gamma_1$ and $\Gamma_2$, as $\{x_1,
x_2, \ldots\}$ and $\{y_1, y_2, \ldots\}$ respectively. We build
finite isomorphisms $f_n$ as follows. Start with $f_0 = \emptyset$.
Suppose that $f_n$ has been constructed. If $n$ is even, let $m$ be
the smallest index of a vertex of $\Gamma_1$ not in the domain of
$f_n$; then extend $f_n$ (as above) to a map $f_{n+1}$ with $x_m$ in
its domain. (To avoid the use of the Axiom of Choice, select the
correctly-joined vertex of $\Gamma_2$ with smallest index to be the
image of $x_m$.) If $n$ is odd, we work backwards. Let $m$ be the
smallest index of a vertex of $\Gamma_2$ which is not in the range
of $f_n$; extend $f_n$ to a map $f_{n+1}$ with $y_m$ in its range
(using property ($\ast$) for $\Gamma_1$).

Take $f$ to be the union of all these partial maps. By going
alternately back and forth, we guaranteed that every vertex of
$\Gamma_1$ is in the domain, and every vertex of $\Gamma_2$ is in
the range, of $f$. So $f$ is the required isomorphism.
\end{proof}

The graph $R$ holds as central a position in graph theory as
$\mathbb{Q}$ does in the theory of ordered sets. It is surprising
that it was not discovered long before the 1960s! Since then, its
importance has grown rapidly, both in its own right, and as a prototype
for other theories.

\begin{remark}\rm
Results of Shelah and Spencer
\cite{ch32:bib47} and Hrushovski \cite{ch32:bib31} suggest that
there are interesting countable graphs which ``control'' the
first-order theory of finite random graphs whose edge-probabilities
tend to zero in specified ways. See Wagner \cite{ch32:bib54},
Winkler \cite{ch32:bib55} for surveys of this.
\end{remark}

\section{Some constructions}
\label{ch32:sec2.2}

Erd\H{o}s and R\'{e}nyi did not feel it necessary to give an
explicit construction of $R$; the fact that almost all countable
graphs are isomorphic to $R$ guarantees its existence. Nevertheless,
such constructions may tell us more about $R$. Of course, to show
that we have constructed $R$, it is necessary and sufficient to
verify condition ($\ast$).

I begin with an example from set theory. The downward
L\"{o}wenheim-Skolem theorem says that a consistent first-order
theory over a countable language has a countable model. In
particular, there is a countable model of set theory (the
\emph{Skolem paradox}).

\begin{theorem}\label{ch32:them2.1} 
Let M be a countable model of set theory.
Define a graph $M^{\ast}$ by the rule that $x\sim y$ if and only if
either $x\in y$ or $y\in x$. Then $M^{\ast}$ is isomorphic to $R$.
\end{theorem}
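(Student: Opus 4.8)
The plan is to verify that $M^{\ast}$ satisfies property ($\ast$); by Fact~\ref{ch32:subsec2.1.2} this suffices to conclude $M^{\ast} \cong R$, since $R$ itself satisfies ($\ast$). So I must show: given finitely many distinct vertices $u_1,\ldots,u_m,v_1,\ldots,v_n$ (which are elements of the model $M$), there exists an element $z$ of $M$ adjacent to each $u_i$ and nonadjacent to each $v_j$ in the graph $M^{\ast}$. Unwinding the definition of the edge relation, I need $z$ such that, for each $i$, either $z \in u_i$ or $u_i \in z$, and for each $j$, both $z \notin v_j$ and $v_j \notin z$.

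First I would try the natural candidate suggested by the membership relation: take $z$ to be a set that \emph{contains} each $u_i$ as an element (giving $u_i \in z$, hence adjacency) while arranging that $z$ avoids all the unwanted incidences with the $v_j$. The obvious first attempt is $z = \{u_1,\ldots,u_m\}$. This immediately gives $u_i \in z$ for each $i$, so the $u_i$ are correctly joined. The remaining issue is the $v_j$: I need $z \notin v_j$ and $v_j \notin z$ for each $j$. Since $z = \{u_1,\ldots,u_m\}$, the condition $v_j \notin z$ just says $v_j \neq u_i$ for all $i$, which holds because the vertices were assumed distinct. The delicate condition is $z \notin v_j$ — there is no reason a priori that the particular set $\{u_1,\ldots,u_m\}$ is not an element of some $v_j$.

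To repair this I would enlarge $z$ slightly so as to break any accidental membership $z \in v_j$, using the Foundation (regularity) axiom, which holds in $M$ as a model of set theory. The key step is to adjoin to $z$ an element chosen to make $z$ ``too big'' to belong to any $v_j$: for instance, one can take $z = \{u_1,\ldots,u_m\} \cup \{w\}$ where $w$ is built to guarantee, via a rank or Foundation argument, that $z \neq v_j$ and indeed $z \notin v_j$ for every $j$. A clean way to force $z \notin v_j$ is to ensure $z$ has higher rank than every $v_j$, or to invoke Foundation directly: if $z$ is constructed to contain an element not contained in any $v_j$, one blocks the equalities, and a rank comparison blocks the memberships. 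I must only take care that adjoining $w$ does not create an unwanted edge, i.e. that $w$ is distinct from and non-incident to the $v_j$ — which can be arranged by choosing $w$ of sufficiently large rank as well.

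The main obstacle, and the step deserving the most care, is precisely controlling the condition $z \notin v_j$: membership in set theory is not symmetric or well-behaved under naive enlargement, so I cannot simply assume the constructed set lies outside the finitely many $v_j$. The resolution leans essentially on the axioms available in $M$ — Pairing and Union to form $z$, and Foundation to rule out the pathological memberships by a rank argument. I would expect the author's proof to exploit exactly this: that the existence of sets of arbitrarily high rank (guaranteed within any model of set theory) lets us always find a correctly joined $z$, thereby verifying ($\ast$) and hence $M^{\ast} \cong R$.
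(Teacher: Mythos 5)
Your overall strategy is the right one and is essentially the paper's: verify property ($\ast$) by taking $z$ to contain each $u_i$ as an element (so adjacency to the $u_i$ is automatic and $v_j\notin z$ follows from distinctness), and then use Foundation to kill the one delicate case $z\in v_j$. The only real shortfall is that you never actually exhibit the extra element $w$; you assert that one ``of sufficiently large rank'' can be built, which is true but leaves the key construction as a promissory note (and, as stated, leans on the rank function, hence on more of ZF than is needed). The paper closes exactly this gap with a single clean choice: $w=x=\{v_1,\ldots,v_n\}$, i.e.\ $z=\{u_1,\ldots,u_m,x\}$. Then $v_j\in z$ would force $v_j=u_i$ (excluded) or $v_j=x$, whence $x\in x$; and $z\in v_j$ would give the cycle $x\in z\in v_j\in x$; both are direct contradictions to Foundation, with no rank or ordinals required. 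This is why the paper can remark afterwards that only Empty Set, Pairing, Union and Foundation are used (in particular not Infinity or Replacement), whereas your rank-based completion, while valid in a model of full ZF, would not support that remark. One small further comment: your worry that $w$ must be ``non-incident'' to the $v_j$ is unnecessary --- membership is not transitive, so $v_j\in w\in z$ does not make $v_j$ adjacent to $z$; indeed the paper's witness deliberately has $v_j\in x\in z$. So: same approach, correct in outline, but incomplete until you name the witness, and the canonical way to name it is precisely the paper's.
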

\begin{proof}
Let $u_1, \ldots, u_m, v_1, \ldots, v_n$ be
distinct elements of $M$. Let $x = \{v_1,\ldots, v_n\}$ and $z =
\{u_1, \ldots, u_m, x\}$. We claim that $z$ is a witness to
condition ($\ast$). Clearly $u_i\sim z$ for all $i$. Suppose that
$v_j \sim z$. If $v_j \in z$, then either $v_j = u_i$ (contrary to
assumption), or $v_j = x$ (whence $x \in x$, contradicting the Axiom
of Foundation). If $z\in v_j$, then $x \in z \in v_j \in x$, again
contradicting Foundation.
\end{proof}

Note how little set theory was actually used: only our ability to
gather finitely many elements into a set (a consequence of the Empty
Set, Pairing and Union Axioms) and the Axiom of Foundation. In
particular, the Axiom of Infinity is not required. Now there is a
familiar way to encode finite subsets of $\mathbb{N}$ as natural
numbers: the set $\{a_1, \ldots,a_n\}$ of distinct elements is
encoded as $2^{a_1}+ \cdots+ 2^{a_n}$. This leads to an explicit
description of $R$: the vertex set is $\mathbb{N}$; $x$ and $y$ are
adjacent if the $x^{\rm th}$ digit in the base $2$ expansion of $y$ is
a $1$ or \emph{vice versa}. This description was given by Rado
\cite{ch32:bib42}.

The next construction is more number-theoretic. Take as vertices the
set $\mathbb{P}$ of primes congruent to $1 \pmod{4}$. By quadratic
reciprocity, if $p,q \in\mathbb{P}$, then $\big(\frac{p}{q}\big) = 1$ if
and only if $\big(\frac{q}{p}\big) = 1$. (Here ``$\big(\frac{p}{q}\big) = 1$''
means that $p$ is a quadratic residue $\pmod{q}$.) We declare $p$ and $q$
adjacent if $\big(\frac{p}{q}\big)= 1$.

Let $u_1, \ldots, u_m, v_1, \ldots, v_n\in\mathbb{P}$. Choose a
fixed quadratic residue $a_i\pmod{u_i}$ (for example, $a_i = 1$),
and a fixed non-residue $b_j\pmod{v_j}$. By the Chinese Remainder
Theorem, the congruences
\[ 
x \equiv 1\pmod{4},\quad x \equiv a_i \pmod{u_i},\quad x \equiv b_j \pmod{v_j},
\]
have a unique solution $x \equiv x_0\pmod {4u_1 \ldots u_m v_1
\ldots  v_n}$. By Dirichlet's Theorem, there is a prime $z$
satisfying this congruence. So property ($\ast$) holds.

A set $S$ of positive integers is called $universal$ if, given
$k\in\mathbb{N}$ and $T \subseteq \{1,\ldots, k\}$, there is an
integer $N$ such that, for $i = 1, \ldots,k$,
\[
N+ i \in S\quad\mbox{if and only if}\quad i \in  T.
\]
(It is often convenient to consider binary sequences instead of
sets. There is an obvious bijection, under which the sequence
$\sigma$ and the set $S$ correspond when $(\sigma_i = 1)
~\Leftrightarrow~(i \in S)$ --- thus $\sigma$ is the characteristic
function of $S$. Now a binary sequence $\sigma$ is universal if and
only if it contains every finite binary sequence as a consecutive
subsequence.)

Let $S$ be a universal set. Define a graph with vertex set
$\mathbb{Z}$, in which $x$ and $y$ are adjacent if and only if 
$|x-y|\in S$. This graph is isomorphic to $R$. For let $u_1, \ldots,
u_m, v_1, \ldots, v_n$ be distinct integers; let $l$ and $L$ be the
least and greatest of these integers. Let $k = L - l + 1$ and $T =
\{u_i - l + 1 : i = 1, \ldots, m\}$. Choose $N$ as in the
definition of universality. Then $z = l - 1 - N$ has the required
adjacencies.

The simplest construction of a universal sequence is to enumerate
all finite binary sequences and concatenate them. But there are many
others. It is straight\-forward to show that a random subset of
$\mathbb{N}$ (obtained by choosing positive integers independently
with probability $\frac{1}{2}$) is almost surely universal. (Said
otherwise, the base $2$ expansion of almost every real number in 
$[0,1]$ is a universal sequence.)

Of course, it is possible to construct a graph satisfying ($\ast$)
directly. For example, let $\Gamma_0$ be the empty graph; if
$\Gamma_k$ has been constructed, let $\Gamma_{k+1}$ be obtained by
adding, for each subset $U$ of the vertex set of $\Gamma_k$, a
vertex $z(U)$ whose neighbour set is precisely $U$. Clearly, the
union of this sequence of graphs satisfies ($\ast$).

\section{Indestructibility}
\label{ch32:sec2.3}

The graph $R$ is remarkably stable: if small changes are made to it,
the resulting graph is still isomorphic to $R$. Some of these
results depend on the following analogue of property ($\ast$), which
appears stronger but is an immediate consequence of ($\ast$) itself.
\begin{proposition}\label{ch32:prop3.1} 
Let $u_1, \ldots, u_m, v_1, \ldots, v_n$ be
distinct vertices of $R$. Then the set
\[
Z = \{z : z\sim u_i\mbox{ for }i = 1,\ldots,m; z\nsim v_j\mbox{ for }j = 1,\ldots n\}
\]
is infinite; and the induced subgraph on this set is isomorphic to
$R$.
\end{proposition}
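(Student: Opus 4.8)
The plan is to verify that the induced subgraph on $Z$ satisfies property ($\ast$), since by Fact~\ref{ch32:subsec2.1.2} any countable graph with property ($\ast$) is isomorphic to $R$, and $Z$ is a set of vertices of the countable graph $R$. The infinitude of $Z$ will then follow for free, since we already observed that any graph satisfying ($\ast$) is infinite; but it is cleaner to note that infinitude is subsumed in the verification of ($\ast$) itself, as the witnessing vertex must be distinct from the finitely many vertices named.

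First I would fix distinct vertices $w_1, \ldots, w_p, x_1, \ldots, x_q$ lying in $Z$, playing the role of the ``local'' data for property ($\ast$) relative to the induced subgraph. The goal is to produce a vertex $z \in Z$ that is adjacent (in $R$) to each $w_k$ and nonadjacent to each $x_l$. The key idea is to apply property ($\ast$) in the \emph{ambient} graph $R$ to a single enlarged list of constraints: I want $z$ adjacent to all of $u_1, \ldots, u_m$ and to all of $w_1, \ldots, w_p$, and nonadjacent to all of $v_1, \ldots, v_n$ and to all of $x_1, \ldots, x_q$. Since all these named vertices are distinct (the $w$'s and $x$'s lie in $Z$ and hence differ from the $u$'s and $v$'s, and are distinct from each other by hypothesis), property ($\ast$) for $R$ furnishes exactly such a $z$.

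The point to check is that this $z$ does the job on both levels. The adjacency constraints involving $u_1, \ldots, u_m, v_1, \ldots, v_n$ guarantee $z \in Z$, so $z$ is a genuine vertex of the induced subgraph; and the adjacency constraints involving $w_1, \ldots, w_p, x_1, \ldots, x_q$ say precisely that $z$ is correctly joined to the prescribed vertices \emph{within} $Z$, since adjacency in the induced subgraph is just adjacency in $R$ restricted to $Z$. Thus the induced subgraph on $Z$ satisfies ($\ast$), and we conclude it is isomorphic to $R$.

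I do not expect any genuine obstacle here: the construction is a one-line application of ($\ast$) to a merged constraint list, and the entire content is bookkeeping to confirm that the $u$/$v$ constraints are what pin $z$ inside $Z$ while the $w$/$x$ constraints handle the internal witnessing. The only mild subtlety worth stating explicitly is that the combined list of named vertices consists of distinct vertices, so that ($\ast$) applies without modification; this is immediate from $w_k, x_l \in Z$ forcing $w_k \nsim v_j$ while $v_j \nsim v_j$ trivially, and similar disjointness observations.
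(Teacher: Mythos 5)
Your proof is correct and essentially identical to the paper's: both verify ($\ast$) for $Z$ by applying ($\ast$) in the ambient graph $R$ to the merged constraint list, so that the original $u_i,v_j$ pin the witness inside $Z$ while the new vertices $w_k,x_l$ supply the internal instance of ($\ast$). (Your closing justification of distinctness is slightly garbled --- the relevant points are that a member of $Z$ is adjacent to every $u_i$ and so equals no $u_i$, and that nonadjacency is a relation on pairs of \emph{distinct} vertices so no $v_j$ lies in $Z$ --- but this is immaterial, and the paper's own proof does not pause over it at all.)
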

\begin{proof}
It is enough to verify property ($\ast$) for $Z$. So
let $u_1^{\prime}, \ldots, u_k^{\prime}, v_1^{\prime}, \ldots,
v_l^{\prime}$  be distinct vertices of $Z$. Now the vertex $z$
adjacent to $u_1, \ldots, u_n, u_1^{\prime}, \ldots, u_k^{\prime}$
and not to $v_1, \ldots, v_n, v_1^{\prime}, \ldots, v_l^{\prime}$,
belongs to $Z$ and witnesses the truth of this instance of ($\ast$)
there.
\end{proof}

The operation of $switching$ a graph with respect to a set $X$ of
vertices is defined as follows. Replace each edge between a vertex
of $X$ and a vertex of its complement by a non-edge, and each such
non-edge by an edge; leave the adjacencies within $X$ or outside $X$
unaltered. See Seidel \cite{ch32:bib46} for more properties of this
operation.
\begin{proposition}\label{ch32:prop3.2} 
The result of any of the following operations
on $R$ is isomorphic to $R$:
\begin{itemize}
\item[(a)] deleting a finite number of vertices;
\item[(b)] changing a finite number of edges into non-edges or vice versa;
\item[(c)] switching with respect to a finite set of vertices.
\end{itemize}
\end{proposition}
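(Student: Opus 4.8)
The plan is to reduce all three parts to a single principle: by Facts~\ref{ch32:subsec2.1.1} and~\ref{ch32:subsec2.1.2}, a countable graph is isomorphic to $R$ precisely when it satisfies ($\ast$). Each of the three operations manifestly yields a countable graph on (a subset of) the vertex set of $R$, so in every case it suffices to verify ($\ast$) in the resulting graph $R'$. The recurring tool will be Proposition~\ref{ch32:prop3.1}: for any finite list of distinct vertices of $R$, the set of correctly-joined vertices is \emph{infinite}. This infinitude is exactly what lets me avoid the finitely many ``damaged'' vertices introduced by each operation.

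For (a), let $R'$ be $R$ with a finite set $F$ of vertices deleted, and let $u_1,\dots,u_m,v_1,\dots,v_n$ be distinct vertices of $R'$. By Proposition~\ref{ch32:prop3.1} the set $Z$ of vertices of $R$ adjacent to all the $u_i$ and to none of the $v_j$ is infinite, so I may pick $z\in Z\setminus F$; since deletion changes no adjacency among the surviving vertices, $z$ witnesses this instance of ($\ast$) in $R'$. For (b), let $R'$ differ from $R$ on a finite set of pairs, and let $W$ be the finite set of all vertices incident to some altered pair. For any $z\notin W$, no pair containing $z$ is altered, so $z$ has exactly the same neighbours in $R'$ as in $R$. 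Given the target vertices, I again use Proposition~\ref{ch32:prop3.1} to find $z\notin W$ adjacent in $R$ to all the $u_i$ and to none of the $v_j$; by the choice of $z$ the same adjacencies hold in $R'$, verifying ($\ast$).

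For (c), let $R'$ be obtained by switching with respect to a finite set $X$. Restricting attention to $z\notin X$, the adjacency of $z$ to a vertex $w$ is unchanged when $w\notin X$ and reversed when $w\in X$. Hence, to make $z$ adjacent in $R'$ to the $u_i$ and nonadjacent to the $v_j$, I reclassify each target vertex lying in $X$ onto the opposite side: I seek a $z$ adjacent in $R$ to every $u_i\notin X$ and every $v_j\in X$, and nonadjacent in $R$ to every $u_i\in X$ and every $v_j\notin X$. These are finitely many distinct vertices, so Proposition~\ref{ch32:prop3.1} supplies infinitely many such $z$ in $R$, and I pick one outside $X$; it witnesses ($\ast$) in $R'$.

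The only genuine obstacle is the sign-bookkeeping in part~(c): one must confirm that switching affects precisely the $z$-to-$X$ adjacencies --- and that the adjacencies among the $u_i,v_j$ themselves are irrelevant, since ($\ast$) constrains only the new vertex $z$ --- and then translate the requirement on $R'$ into the correct instance of ($\ast$) for $R$. Parts (a) and (b) are essentially immediate once Proposition~\ref{ch32:prop3.1} is invoked to dodge the finite exceptional set.
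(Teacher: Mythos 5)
Your proof is correct and follows essentially the same route as the paper: parts (a) and (b) invoke Proposition~\ref{ch32:prop3.1} to find a correctly-joined vertex avoiding the finitely many tampered vertices, and part (c) translates the requirement in the switched graph into the instance of ($\ast$) in $R$ asking for adjacency to $U\setminus X$ and $V\cap X$ and nonadjacency to $U\cap X$ and $V\setminus X$, exactly as the paper does. The sign-bookkeeping in (c) checks out, and your explicit use of the infinitude from Proposition~\ref{ch32:prop3.1} to choose $z\notin X$ is a minor clarification of a step the paper leaves implicit.
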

\begin{proof}
In cases (a) and (b), to verify an instance of
property ($\ast$), we use Proposition~\ref{ch32:prop3.1} to avoid
the vertices which have been tampered with. For (c), if $U
=\{u_1,\ldots,u_m\}$ and $V = \{v_1,\ldots, v_n\}$, we choose a
vertex outside $X$ which is adjacent (in $R$) to the vertices of $U
\setminus X$ and $V \cap X$, and non-adjacent to those of $U\cap X$
and $V \setminus X$.
\end{proof}

Not every graph obtained from $R$ by switching is isomorphic to $R$.
For example, if we switch with respect to the neighbours of a vertex
$x$, then $x$ is an isolated vertex in the resulting graph. However,
if $x$ is deleted, we obtain $R$ once again! Moreover, if we switch
with respect to a random set of vertices, the result is almost
certainly isomorphic to $R$.

$R$ satisfies the \emph{pigeonhole principle}:
\begin{proposition}\label{ch32:prop3.3} 
If the vertex set of $R$ is partitioned into a
finite number of parts, then the induced subgraph on one of these
parts is isomorphic to $R$.
\end{proposition}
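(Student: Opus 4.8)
The plan is to argue by contradiction, using the fundamental characterization supplied by Fact~\ref{ch32:subsec2.1.2}: a countable graph is isomorphic to $R$ precisely when it satisfies property $(\ast)$ (note that $(\ast)$ forces infiniteness, so countability and the existence of a correctly-joined witness are the only things to check). Suppose $V(R) = V_1 \cup \cdots \cup V_k$ is a partition into finitely many parts, and assume for contradiction that none of the induced subgraphs on the $V_i$ is isomorphic to $R$. Each $V_i$ is a subset of a countable set and hence countable, so by the characterization the failure of isomorphism means that property $(\ast)$ \emph{fails} in each induced subgraph $R[V_i]$.

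First I would unpack what this failure provides locally. For each $i$, the negation of $(\ast)$ inside $R[V_i]$ yields finitely many distinct vertices $u_1^{(i)}, \ldots, u_{m_i}^{(i)}, v_1^{(i)}, \ldots, v_{n_i}^{(i)}$ of $V_i$ such that no vertex $z \in V_i$ is adjacent to all the $u^{(i)}$'s and nonadjacent to all the $v^{(i)}$'s. I would then collect all of these witnessing vertices, over all $k$ parts, into a single set. Because the partition is finite and each witnessing family is finite, this combined set is finite — and this is the one and only place the finiteness hypothesis is used.

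Next I would apply property $(\ast)$ for $R$ itself to the combined finite set, designating every $u_p^{(i)}$ as a required neighbour and every $v_q^{(i)}$ as a required non-neighbour. This produces a single vertex $z$ of $R$ that is adjacent to all the $u$-vertices and nonadjacent to all the $v$-vertices \emph{simultaneously across every part}. Since the parts cover $V(R)$, the vertex $z$ lies in some $V_{i_0}$. But then $z$ is a vertex of $V_{i_0}$ adjacent to all of $u_1^{(i_0)}, \ldots, u_{m_{i_0}}^{(i_0)}$ and nonadjacent to all of $v_1^{(i_0)}, \ldots, v_{n_{i_0}}^{(i_0)}$, which is exactly the correctly-joined configuration that was supposed to have no witness inside $V_{i_0}$. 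This contradiction shows that some part must satisfy $(\ast)$, and hence its induced subgraph is isomorphic to $R$. I do not expect a serious technical obstacle here; the only genuine idea — and the step I would flag as the crux — is the recognition that finitely many \emph{local} failures of $(\ast)$ can be bundled into a single \emph{global} instance of $(\ast)$, whose witness is then forced into one of the offending parts.
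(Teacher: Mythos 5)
Your proposal is correct and is essentially the paper's own argument: negate the conclusion, extract from each part a finite witnessing family for the failure of $(\ast)$, take the union of these finitely many finite families, and observe that any vertex of $R$ correctly joined to the combined configuration would land in some part and contradict that part's failure. The paper phrases the final step as ``condition $(\ast)$ fails in $R$ for $U$ and $V$,'' but this is the same contradiction you derive.
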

\begin{proof}
Suppose that the conclusion is false for the
partition $X_1 \cup \ldots \cup X_k$ of the vertex set. Then, for
each $i$, property ($\ast$) fails in $X_i$, so there are finite
disjoint subsets $U_i, V_i$ of $X_i$ such that no vertex of $X_i$
is ``correctly joined'' to all vertices of $U_i$, and to none of
$V_i$. Setting $U = U_1 \cup \ldots \cup U_k$ and $V = V_1 \cup
\ldots \cup V_k$, we find that condition ($\ast$) fails in $R$ for
the sets $U$ and $V$, a contradiction.
\end{proof}

Indeed, this property is characteristic:
\begin{proposition}\label{ch32:prop3.4} 
The only countable graphs $\Gamma$ which have
the property that, if the vertex set is partitioned into two parts,
then one of those parts induces a subgraph isomorphic to $\Gamma$,
are the complete and null graphs and $R$.
\end{proposition}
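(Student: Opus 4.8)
The plan is to prove the two directions separately, the forward (classification) direction being the substantial one. First I would check that the three listed graphs do have the stated property. For the complete and null graphs this is immediate: in any partition of a countably infinite vertex set one part is infinite, and an infinite induced subgraph of a complete (resp.\ null) graph is again complete (resp.\ null) on countably many vertices, hence isomorphic to the whole. For $R$ it is exactly Proposition~\ref{ch32:prop3.3}, a two-part partition being a special finite partition.

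For the converse, let $\Gamma$ be countable with the partition property. A finite graph on $\ge 2$ vertices fails the property (split off a single vertex), so $\Gamma$ is infinite or a single vertex (which is both complete and null); assume $\Gamma$ infinite and neither complete nor null, and aim to verify ($\ast$), so that $\Gamma\cong R$ by Fact~\ref{ch32:subsec2.1.2}. The property is invariant under complementation (complementing each induced subgraph carries a copy of $\Gamma$ to a copy of $\overline{\Gamma}$, and permutes $R$, complete, null among themselves), so I may argue up to complementation. The key structural step is that $\Gamma$ has no isolated and no universal vertex: if $I$ is the nonempty set of isolated vertices, the partition $(I,\,V(\Gamma)\setminus I)$ forces one part to be $\cong\Gamma$, but $I$ is null (forcing $\Gamma$ null) while $V(\Gamma)\setminus I$ has no isolated vertex (a neighbour of a non-isolated vertex is non-isolated) though $\Gamma$ does --- either way a contradiction; dually for universal vertices. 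From this I extract the self-similarity I need: for every vertex $v$, both $N(v)\cong\Gamma$ and $\overline{N}(v)\cong\Gamma$. Indeed, in $(N(v)\cup\{v\},\,\overline{N}(v))$ the vertex $v$ is universal in its part, so that part is not $\cong\Gamma$, forcing $\overline{N}(v)\cong\Gamma$; symmetrically $(N(v),\,\{v\}\cup\overline{N}(v))$ forces $N(v)\cong\Gamma$.

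With this self-similarity, many instances of ($\ast$) follow by working inside a neighbourhood. Given distinct vertices, some to be dominated (the $u$'s) and some to be avoided (the $v$'s), suppose one of them, say $a$, has all the others on its correct side --- inside $N(a)$ if $a$ is a $u$, inside $\overline{N}(a)$ if $a$ is a $v$. Then I pass to that induced neighbourhood, which is $\cong\Gamma$ and contains all the remaining vertices, and finish by induction on the number of vertices. Chaining this reduces ($\ast$) to ``unbalanced'' patterns admitting no such vertex, the prototype being a set of pairwise non-adjacent vertices with no common neighbour. These I would rule out using the partition property directly. As a model, if two non-adjacent vertices $a,b$ had no common neighbour then $N(a)$ and $N(b)$ are disjoint, and the colouring putting $a$ with $N(b)$ in one class and $b$ with $N(a)$ in the other makes $a$ isolated in its class and $b$ isolated in the other; neither class is $\cong\Gamma$, a contradiction. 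Hence every two vertices have a common neighbour (dually, a common non-neighbour), which already yields all patterns on two vertices.

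The hard part will be the general unbalanced pattern on three or more vertices. The clean two-vertex colouring does not generalise directly: once every pair is known to have a common neighbour, a class containing an entire neighbourhood dominates everything, so no vertex can be made isolated there, and ``plant an isolated vertex in each class'' is no longer available. The fix I would pursue is to plant in each class a configuration already known to be impossible in $\Gamma$ --- for instance a pair of vertices having no common neighbour within the class, which is forbidden since every pair of $\Gamma$ has one --- and to choose the colouring (guided by the failed pattern and an induction on the number of constrained vertices) so that \emph{both} classes simultaneously acquire such a forbidden configuration. Arranging this simultaneous control for an arbitrary pattern, rather than for a single pair, is the crux; once ($\ast$) is secured, Fact~\ref{ch32:subsec2.1.2} identifies $\Gamma$ with $R$ and completes the classification.
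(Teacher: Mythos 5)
Your plan is sound up to and including the self-similarity step (the arguments that $\Gamma$ has no isolated or universal vertices, that $N(v)\cong\Gamma$ and $\overline{N}(v)\cong\Gamma$ for every $v$, and the two-vertex common-neighbour case are all correct), but it stops exactly where the real work begins, and you say so yourself: the general ``unbalanced'' instance of ($\ast$) is not handled, only a hope is expressed that one can ``plant a forbidden configuration in both classes simultaneously.'' That is the entire content of the proposition, so as it stands this is a genuine gap, not a proof. It is also far from clear that your neighbourhood-induction route can be pushed through: once every pair of vertices has a common neighbour, no two-class colouring can isolate a prescribed vertex, and controlling a forbidden sub-configuration in \emph{both} classes at once for an arbitrary pattern of $m$ vertices to dominate and $n$ to avoid is a problem of the same order of difficulty as the original statement.

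The idea you are missing is to run a minimal-counterexample argument on ($\ast$) itself rather than an induction through neighbourhoods. Suppose $\Gamma$ is neither complete nor null (so, as you showed, has no isolated or universal vertex, whence every one-point instance of ($\ast$) holds) and is not isomorphic to $R$; choose a failing instance $u_1,\ldots,u_m,v_1,\ldots,v_n$ with $m+n>1$ minimal. Split the named vertices into two nonempty sets $A$ and $B$. Put into $X$ the set $A$ together with every vertex not correctly joined to $A$ (and not in $B$), and into $Y$ the set $B$ together with everything else. Because the full instance fails, no vertex is correctly joined to both $A$ and $B$, so $X\cup Y$ is the whole vertex set; and $X$ fails the instance determined by $A$, while $Y$ fails the instance determined by $B$, each involving fewer than $m+n$ vertices. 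By minimality neither part can be isomorphic to $\Gamma$, contradicting the partition property. This single partition replaces your entire case analysis, including the ``crux'' you left open.
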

\begin{proof}
Suppose that $\Gamma$ has this property but is not
complete or null. Since any graph can be partitioned into a null
graph and a graph with no isolated vertices, we see that $\Gamma$
has no isolated vertices. Similarly, it has no vertices joined to
all others.

Now suppose that $\Gamma$ is not isomorphic to $R$. Then we can find
$u_1,\ldots,u_m$ and $v_1,\ldots,v_n$ such that ($\ast$) fails, with
$m +n$ minimal subject to this. By the preceding paragraph, $m + n >
1$. So the set $\{u_1,\ldots, v_n\}$ can be partitioned into two
non-empty subsets $A$ and $B$. Now let $X$ consist of $A$ together
with all vertices (not in $B$) which are not ``correctly joined'' to
the vertices in $A$; and let $Y$ consist of $B$ together with all
vertices (not in $X$) which are not ``correctly joined'' to the
vertices in $B$. By assumption, $X$ and $Y$ form a partition of the
vertex set. Moreover, the induced subgraphs on $X$ and $Y$ fail
instances of condition ($\ast$) with fewer than $m +n$ vertices; by
minimality, neither is isomorphic to $\Gamma$, a contradiction.
\end{proof}

Finally:
\begin{proposition}\label{ch32:prop3.5} 
$R$ is isomorphic to its complement.
\end{proposition}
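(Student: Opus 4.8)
The plan is to appeal to Fact~\ref{ch32:subsec2.1.2}, which says that any two countable graphs satisfying ($\ast$) are isomorphic. Since $R$ itself satisfies ($\ast$), it suffices to show that its complement, say $\overline{R}$, also satisfies ($\ast$); the isomorphism $R \cong \overline{R}$ is then furnished automatically by the back-and-forth machinery already established.

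So the whole task reduces to verifying ($\ast$) for $\overline{R}$. First I would write down what an instance of ($\ast$) demands there: given distinct vertices $u_1, \ldots, u_m, v_1, \ldots, v_n$, I need a vertex $z$ that is adjacent in $\overline{R}$ to each $u_i$ and nonadjacent in $\overline{R}$ to each $v_j$. The next step is to translate these requirements back into $R$ using the definition of the complement, namely that adjacency in $\overline{R}$ is nonadjacency in $R$ and \emph{vice versa}. Thus I am really asking for a vertex $z$ that is nonadjacent in $R$ to every $u_i$ and adjacent in $R$ to every $v_j$.

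But this is precisely an instance of ($\ast$) for $R$, with the roles of the two lists interchanged: take the $v_j$ as the vertices to which $z$ must be joined and the $u_i$ as those from which it must be kept apart. Property ($\ast$) for $R$ (or, if one prefers, Proposition~\ref{ch32:prop3.1}) then supplies such a $z$ at once. Hence $\overline{R}$ satisfies ($\ast$), and the proposition follows.

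I do not expect any genuine obstacle here. The entire content of the argument is the observation that ($\ast$) is manifestly symmetric in its two hypotheses --- swapping the ``joined-to'' list with the ``separated-from'' list is exactly what complementation does --- so the self-complementarity of $R$ is already encoded in the defining property. The only point requiring care is the bookkeeping: making sure the adjacency and non-adjacency conditions are transcribed in the correct direction when passing between $R$ and $\overline{R}$.
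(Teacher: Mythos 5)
Your argument is correct and is exactly the paper's: the paper disposes of this proposition with the single remark that property ($\ast$) is ``clearly self-complementary,'' which is precisely the swap of the two vertex lists that you spell out, followed by the appeal to Fact~\ref{ch32:subsec2.1.2}. You have simply made the bookkeeping explicit.
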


For property ($\ast$) is clearly self-complementary.

\section{Graph-theoretic properties}
\label{ch32:sec2.4}

The most important property of $R$ (and the reason for Rado's
interest) is that it is \emph{universal}: 

\begin{proposition}\label{ch32:prop4.1} 
Every finite or countable graph can be embedded
as an induced subgraph of $R$.
\end{proposition}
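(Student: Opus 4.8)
The plan is to run just the ``forth'' half of the back-and-forth argument already used in the proof of Fact~\ref{ch32:subsec2.1.2}. Given a finite or countable graph $\Gamma$, enumerate its vertices as $x_1, x_2, \ldots$ (the list terminating if $\Gamma$ is finite). I will construct an embedding $f \colon \Gamma \to R$ as the union of an increasing chain of partial maps $f_0 \subseteq f_1 \subseteq \cdots$, where $f_n$ is defined on $\{x_1, \ldots, x_n\}$ and is an isomorphism of induced subgraphs onto its image in $R$.

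First I would set $f_0 = \emptyset$ and describe the inductive step. Suppose $f_n$ has been built. Let $U$ be the set of neighbours of $x_{n+1}$ among $x_1, \ldots, x_n$, and let $V = \{x_1, \ldots, x_n\} \setminus U$. A correct image for $x_{n+1}$ is any vertex $z$ of $R$ adjacent to every vertex of $f_n(U)$ and nonadjacent to every vertex of $f_n(V)$. Since $f_n$ is injective, the vertices making up $f_n(U)$ and $f_n(V)$ are distinct, so property ($\ast$), applied in $R$ to these vertices, furnishes such a $z$; I set $f_{n+1} = f_n \cup \{(x_{n+1}, z)\}$.

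Two routine checks complete the argument. Because $z$ is adjacent to each vertex of $f_n(U)$ and nonadjacent to each vertex of $f_n(V)$ while $R$ has no loops, $z$ differs from every earlier image, so each $f_n$ (and hence $f = \bigcup_n f_n$) is injective; and by construction $z$ has exactly the prescribed adjacencies, so $f_{n+1}$ remains an isomorphism of induced subgraphs. Thus $f$ embeds $\Gamma$ as an induced subgraph of $R$.

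The point worth flagging is that there is essentially no obstacle here: unlike Fact~\ref{ch32:subsec2.1.2}, we do not require $f$ to be onto, so only the ``forth'' steps are needed and property ($\ast$) for $R$ alone suffices. (One could alternatively deduce the result by combining Fact~\ref{ch32:subsec2.1.2} with the direct construction at the end of Section~\ref{ch32:sec2.2}, but the one-sided back-and-forth is cleaner and self-contained.)
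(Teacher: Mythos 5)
Your proof is correct and follows essentially the same route as the paper: the paper also runs only the ``forth'' half of the back-and-forth argument, extending a finite partial isomorphism one vertex at a time by applying property ($\ast$) in $R$ to the images of the neighbours and non-neighbours of the new vertex. Your added remarks on injectivity and on why only ($\ast$) for the target graph is needed are accurate and match the paper's parenthetical comment.
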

\begin{proof}
We apply the proof technique of Fact~\ref{ch32:subsec2.1.2}; 
but, instead of back-and-forth, we
just ``go forth''. Let $\Gamma$ have vertex set $\{x_1,
x_2,\ldots\}$, and suppose that we have a map $f_n : \{x_1,
\ldots,x_n\} \rightarrow R$ which is an isomorphism of induced
subgraphs. Let $U$ and $V$ be the sets of neighbours and
non-neighbours respectively of $x_{n+1}$ in $\{x_1, \ldots,x_n\}$.
Choose $z \in R$ adjacent to the vertices of $f(U)$ and nonadjacent
to those of $f(V)$, and extend $f_n$ to map $x_{n+1}$ to $z$. The
resulting map $f_{n+1}$ is still an isomorphism of induced
subgraphs. Then $f = \bigcup f_n$ is the required embedding. (The
point is that, going forth, we only require that property ($\ast$)
holds in the target graph.)
\end{proof}

In particular, $R$ contains infinite cliques and cocliques. Clearly
no finite clique or coclique can be maximal. There do exist infinite
maximal cliques and cocliques. For example, if we enumerate the
vertices of $R$ as $\{x_1, x_2, \ldots\}$, and build a set $S$ by
$S_0 = \emptyset, S_{n +1} = S_n \cup \{x_m\}$ where $m$ is the
least index of a vertex joined to every vertex in $S_n$, and $S =
\bigcup S_n$, then $S$ is a maximal clique.

Dual to the concept of induced subgraph is that of \emph{spanning
subgraph}, using all the vertices and some of the edges. Not every
countable graph is a spanning subgraph of $R$ (for example, the
complete graph is not). We have the following characterization:

\begin{proposition}\label{ch32:prop4.2} 
A countable graph $\Gamma$ is isomorphic to a
spanning subgraph of $R$ if and only if, given any finite set
$\{v_1, \ldots, v_n\}$ of vertices of $\Gamma$, there is a vertex
$z$ joined to none of $v_1, \ldots, v_n$.
\end{proposition}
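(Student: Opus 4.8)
The statement is an ``if and only if'', and I would treat the two directions separately; throughout, recall that $\Gamma$ being isomorphic to a spanning subgraph of $R$ means there is a \emph{bijection} $f\colon V(\Gamma)\to V(R)$ with the property that $x\sim y$ in $\Gamma$ implies $f(x)\sim f(y)$ in $R$ (equivalently, $f(x)\nsim f(y)$ in $R$ implies $x\nsim y$ in $\Gamma$). Note first that the stated condition forces $\Gamma$ to be infinite, since the witness $z$ is distinct from the given vertices, so there is no cardinality obstruction to matching $V(R)$.

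For the necessity (forward) direction, suppose such a bijection $f$ exists and let $\{v_1,\dots,v_n\}$ be given. The plan is to apply property ($\ast$) to $R$ with no required neighbours, obtaining a vertex $w\in R$ nonadjacent to each $f(v_i)$ and distinct from them. Setting $z=f^{-1}(w)$, the contrapositive of edge-preservation gives $z\nsim v_i$ for every $i$ (and $z\ne v_i$ since $w\ne f(v_i)$), so $z$ is joined to none of the $v_i$, as required. This half is short and uses only ($\ast$) together with the surjectivity of $f$.

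The sufficiency (backward) direction is the substance, and the plan is a back-and-forth construction adapted to the one-sided nature of a spanning subgraph. First I would upgrade the hypothesis: applying it to larger and larger finite sets shows that, for any finite set of vertices of $\Gamma$, there are in fact \emph{infinitely many} vertices joined to none of them. I then enumerate $V(\Gamma)$ and $V(R)$ and build finite partial maps $f_k$ that preserve edges (from $\Gamma$ into $R$) but not necessarily non-edges, exactly as in Fact~\ref{ch32:subsec2.1.2} but with a weaker compatibility requirement. Going \emph{forth}, to bring a new vertex $x$ of $\Gamma$ into the domain I must choose its image $w\in R$ adjacent to the images of all current neighbours of $x$; by Proposition~\ref{ch32:prop3.1} there are infinitely many such $w$ in $R$, so I can pick one outside the current (finite) range. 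Going \emph{back}, to bring a new vertex $w$ of $R$ into the range I must choose a preimage $x$ in $\Gamma$ that is nonadjacent to every current domain vertex $y$ with $f(y)\nsim w$ (so that no $\Gamma$-edge is sent to an $R$-non-edge); by the infinitude just established there are infinitely many such $x$, so I can pick one outside the current domain. The union $f=\bigcup_k f_k$ is then a bijection preserving edges, i.e.\ an isomorphism of $\Gamma$ onto a spanning subgraph of $R$.

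The hard part will be the asymmetry of the construction. Unlike the back-and-forth of Fact~\ref{ch32:subsec2.1.2}, the two halves here are not mirror images: the forth step only needs to realise \emph{adjacencies}, and so relies on the ``common neighbour'' strength of ($\ast$) in $R$, whereas the back step only needs to realise \emph{non-adjacencies}, and so relies on the ``common non-neighbour'' hypothesis on $\Gamma$. Keeping track of precisely which constraints each step must satisfy --- and verifying that this weaker, one-directional requirement is genuinely all that a spanning subgraph demands --- is where care is needed; once the correct constraints are isolated, each extension exists for the reason indicated, and the alternation guarantees that the domain and range exhaust $V(\Gamma)$ and $V(R)$.
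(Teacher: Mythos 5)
Your proof is correct and follows essentially the same route as the paper's: an asymmetric back-and-forth in which the forth step (into $R$) need only realise adjacencies, via property $(\ast)$, while the back step (into $\Gamma$) need only realise nonadjacencies, via the stated hypothesis. The paper gives only a one-line description of this construction; your write-up supplies the details it omits (the upgrade of the hypothesis to infinitely many witnesses, the choice of each new vertex outside the current finite domain or range, and the easy forward implication), all correctly.
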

\begin{proof}
We use back-and-forth to construct a bijection
between the vertex sets of $\Gamma$ and $R$, but when going back
from $R$ to $\Gamma$, we only require that \emph{nonadjacencies}
should be preserved.
\end{proof}

This shows, in particular, that every infinite locally finite graph
is a spanning subgraph (so $R$ contains $1$-factors, one- and two-way
infinite Hamiltonian paths, etc.). But more can be said.

The argument can be modified to show that, given any non-null
locally finite graph $\Gamma$, any edge of $R$ lies in a spanning
subgraph isomorphic to $\Gamma$. Moreover, as in the last section,
if the edges of a locally finite graph are deleted from $R$, the
result is still isomorphic to $R$. Now let $\Gamma_1,
\Gamma_2,\ldots$ be given non-null locally finite countable graphs.
Enumerate the edges of $R$, as $\{e_1,e_2,\ldots\}$. Suppose that
we have found edge-disjoint spanning subgraphs of $R$ isomorphic to
$\Gamma_1,\ldots,\Gamma_n$. Let $m$ be the smallest index of an edge
of $R$ lying in none of these subgraphs. Then we can find a spanning
subgraph of $R - (\Gamma_1 \cup\cdots\cup\Gamma_n)$ containing $e_m$
and isomorphic to $\Gamma_{n +1}$. We conclude:

\begin{proposition}\label{ch32:prop4.3} 
The edge set of $R$ can be partitioned into
spanning subgraphs isomorphic to any given countable sequence of
non-null countable locally finite graphs.
\end{proposition}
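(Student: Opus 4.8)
The plan is to construct the required spanning subgraphs $H_1, H_2, \dots$ one at a time by a greedy induction, keeping them pairwise edge-disjoint, arranging $H_i \cong \Gamma_i$, and ensuring that every edge of $R$ is eventually used. Two ingredients carry the argument, both recorded in the discussion preceding the statement: first, that deleting the edges of a locally finite graph from $R$ leaves a graph isomorphic to $R$; and second, that inside a copy of $R$ any prescribed edge lies in a spanning subgraph isomorphic to a given non-null locally finite graph. Granting these, the construction is a matter of bookkeeping.

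For the inductive step, suppose edge-disjoint spanning subgraphs $H_1, \dots, H_n$ of $R$ have been found with $H_i \cong \Gamma_i$, and let $R_n = R - (H_1 \cup \dots \cup H_n)$ be the graph obtained by deleting all their edges. A finite union of locally finite graphs is locally finite, so by the first of these facts $R_n \cong R$. I enumerate the edges of $R$ as $e_1, e_2, \dots$ and let $e_m$ be the least-indexed edge covered by none of $H_1, \dots, H_n$; then $e_m$ is an edge of $R_n$. Applying the second fact inside $R_n \cong R$ produces a spanning subgraph $H_{n+1}$ of $R_n$ (hence of $R$) with $H_{n+1} \cong \Gamma_{n+1}$ and $e_m \in E(H_{n+1})$. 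By construction $H_{n+1}$ is edge-disjoint from $H_1, \dots, H_n$, so the invariant is maintained.

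It remains to check that $\bigcup_i E(H_i) = E(R)$, which is the only genuinely new point and the step I expect to need the most care. Here the device of forcing $e_m$ --- the least uncovered edge --- into $H_{n+1}$ at every stage does the work: I claim $e_j$ is covered by the end of stage $j$. Indeed, if $e_j$ were still uncovered after $j$ stages, then at each of those stages the least uncovered edge had index strictly below $j$ (otherwise $e_j$ itself would have been the one forced in), so the $j$ distinct edges newly covered in stages $1, \dots, j$ would all lie in $\{e_1, \dots, e_{j-1}\}$, which is impossible by counting. Hence every edge is used, the $E(H_i)$ are disjoint, and they partition $E(R)$ with $H_i \cong \Gamma_i$ as required.

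Finally, if one wants the two imported facts proved from scratch --- which is where the real substance lies --- I would obtain both as variants of the back-and-forth method already used for Propositions~\ref{ch32:prop4.1} and~\ref{ch32:prop4.2}. For the deletion fact I would verify ($\ast$) in $R_n$ directly: given target vertices, Proposition~\ref{ch32:prop3.1} supplies infinitely many witnesses in $R$, and since only finitely many edges at each $u_i$ are deleted (local finiteness), all but finitely many witnesses retain their required adjacencies, while nonadjacencies can only be preserved. For the prescribed-edge fact I would run the spanning-subgraph back-and-forth of Proposition~\ref{ch32:prop4.2}, but initialize the partial isomorphism by sending a chosen edge of $\Gamma_{n+1}$ to the prescribed edge; local finiteness of $\Gamma_{n+1}$ is exactly what lets the ``back'' steps always find an unused preimage nonadjacent to a prescribed finite set.
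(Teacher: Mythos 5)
Your proof is correct and follows essentially the same route as the paper, which also proceeds by the greedy induction on the least-indexed uncovered edge $e_m$, using the same two facts (deleting a locally finite set of edges leaves a copy of $R$, and any edge lies in a spanning subgraph isomorphic to a given non-null locally finite graph). You merely supply details the paper leaves implicit, namely the counting argument that every edge is eventually covered and the back-and-forth verifications of the two imported facts.
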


In particular, $R$ has a $1$-factorization, and a partition into
Hamiltonian paths.

\section{Homogeneity and categoricity}
\label{ch32:sec2.5}

We come now to two model-theoretic properties of $R$. These
illustrate two important general theorems, the
Engeler--Ryll-Nardzewski--Svenonius theorem and Fra\"{i}ss\'{e}'s
theorem. The context is first-order logic; so a \emph{structure}
is a set equipped with a collection of relations, functions and
constants whose names are specified in the language. If there are no
functions or constants, we have a \emph{relational structure}. The
significance is that any subset of a relational structure carries an
induced substructure. (In general, a substructure must contain the
constants and be closed with respect to the functions.)

Let $M$ be a relational structure. We say that $M$ is
\emph{homogeneous} if every isomorphism between finite induced
substructures of $M$ can be extended to an automorphism of $M$.

\begin{proposition}\label{ch32:prop5.1} 
$R$ is homogeneous.
\end{proposition}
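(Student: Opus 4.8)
The plan is to reuse the back-and-forth machinery of Fact~\ref{ch32:subsec2.1.2}, the only change being that the construction is seeded with the given finite isomorphism rather than with the empty map. So suppose $f_0$ is an isomorphism between the induced subgraphs on two finite vertex sets $A$ and $B$ of $R$; the aim is to extend $f_0$ to an automorphism of $R$. Note that the present setting is exactly that of Fact~\ref{ch32:subsec2.1.2} with both $\Gamma_1$ and $\Gamma_2$ taken to be $R$ itself, and with a nonempty partial map prescribed at the start.

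First I would recall the one-vertex extension step established inside the proof of Fact~\ref{ch32:subsec2.1.2}: if $g$ is a finite partial isomorphism from $R$ into $R$ and $x$ is a vertex of $R$ outside the domain of $g$, then, writing $U$ and $V$ for the neighbours and non-neighbours of $x$ within the domain of $g$, property ($\ast$) applied in the target copy of $R$ to the sets $g(U)$ and $g(V)$ yields a vertex $z$ correctly joined to $g(U)$ and $g(V)$; setting $g(x) = z$ extends $g$ by one vertex in its domain. Because the source and target are the same graph $R$, the symmetric statement holds as well: $g$ can be extended to cover any prescribed vertex in its range, by applying ($\ast$) in the source copy of $R$ to the preimages of the appropriate neighbours and non-neighbours.

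Next I would enumerate the vertices of $R$ twice, once as the domain and once as the range, and build a chain $f_0 \subseteq f_1 \subseteq f_2 \subseteq \cdots$ of finite partial isomorphisms exactly as in Fact~\ref{ch32:subsec2.1.2}: at even stages I force the least-indexed unused domain vertex into the domain, and at odd stages I force the least-indexed unused range vertex into the range, each time invoking the extension step above. To sidestep the Axiom of Choice I would again break ties by selecting the correctly-joined vertex of least index. The union $f = \bigcup_n f_n$ is then a well-defined injection whose domain is all of $R$ and whose range is all of $R$, and which preserves both adjacency and non-adjacency; hence $f$ is an automorphism of $R$, and it extends $f_0$ by construction.

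There is essentially no new obstacle here: the entire content is that ($\ast$) holds in $R$ and is available on both the source and the target side, which is immediate since both copies are $R$. The only point deserving a word of care is that seeding the recursion with a nonempty $f_0$ does not disturb the alternation --- but since the extension step accepts an arbitrary finite partial isomorphism as input, each back-and-forth sweep remains valid, and the initial agreement of $f$ with $f_0$ on $A$ is preserved at every stage. Thus homogeneity of $R$ is really just Fact~\ref{ch32:subsec2.1.2} run with a prescribed finite initial segment.
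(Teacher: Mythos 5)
Your proposal is correct and is essentially the paper's own proof: the paper likewise observes that the back-and-forth machine of Fact~\ref{ch32:subsec2.1.2} can be started from any given finite isomorphism rather than the empty map, and applies it with $\Gamma_1 = \Gamma_2 = R$. You have simply written out the details that the paper leaves implicit.
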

\begin{proof}
In the proof of Fact~\ref{ch32:subsec2.1.2}, the
back-and-forth machine can be started with any given isomorphism
between finite substructures of the graphs $\Gamma_1$ and
$\Gamma_2$, and extends it to an isomorphism between the two
structures. Now, taking $\Gamma_1$ and $\Gamma_2$ to be $R$ gives
the conclusion.
\end{proof}

Fra\"{i}ss\'{e} \cite{ch32:bib21} observed that $\mathbb{Q}$ (as an
ordered set) is homogeneous, and used this as a prototype: he gave a
necessary and sufficient condition for the existence of a
homogeneous structure with prescribed finite substructures.
Following his terminology, the \emph{age} of a structure $M$ is the
class of all finite structures embeddable in $M$. A class
$\mathcal{C}$ of finite structures has the \emph{amalgamation
property} if, given $A$, $B_1$, $B_2 \in \mathcal{C}$ and embeddings
$f_1 : A \rightarrow B_1$ and $f_2 : A \rightarrow B_2$, there
exists $C \in\mathcal{C}$ and embeddings $g_1 : B_1 \rightarrow C$
and $g_2 : B_2 \rightarrow C$ such that $f_1g_1 = f_2g_2$. (Less
formally, if the two structures $B_1, B_2$ have isomorphic
substructures $A$, they can be ``glued together'' so that the copies
of $A$ coincide, the resulting structure $C$ also belonging to the
class $\mathcal{C}$.) We allow $A=\emptyset$ here.

\begin{theorem}\label{ch32:them5.1} 
\begin{itemize}
\item[(a)] A class $C$ of finite structures (over a
fixed relational language)is the age of a countable homogeneous
structure $M$ if and only if $\mathcal{C}$ is closed under
isomorphism, closed under taking induced substructures, contains
only countably many non-isomorphic structures, and has the
amalgamation property.
\item[(b)] If the conditions of (a) are satisfied,
then the structure $M$ is unique up to isomorphism.
\end{itemize}
\end{theorem}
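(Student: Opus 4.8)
The plan is to isolate a single structural condition --- a \emph{one-point extension property} modelled on ($\ast$) --- and to show that, for a countable structure $M$ whose age is $\mathcal{C}$, this property is equivalent to homogeneity. Say that $M$ has the \emph{extension property} if, whenever $A$ is a finite substructure of $M$ and $B \in \mathcal{C}$ contains $A$ as an induced substructure with $|B| = |A| + 1$, every embedding of $A$ into $M$ extends to an embedding of $B$ into $M$. That homogeneity implies the extension property is immediate: given such $A \subseteq B$, embed $B$ into $M$ (possible since $B \in \mathcal{C} = \operatorname{age}(M)$), compare its restriction on $A$ with the given embedding of $A$, and use homogeneity to produce an automorphism of $M$ carrying one copy of $A$ to the other; composing yields the desired extension. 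Conversely, the extension property yields homogeneity by exactly the back-and-forth argument of Fact~\ref{ch32:subsec2.1.2} and Proposition~\ref{ch32:prop5.1}: enumerate $M$, start from the given finite isomorphism, and at each step transport a one-point substructure of $M$ across the current partial map and realize its image using the extension property. Once this equivalence is in hand, both parts of the theorem reduce to constructing and recognizing structures with the extension property.

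For the necessity direction of (a), suppose $\mathcal{C} = \operatorname{age}(M)$ for a countable homogeneous $M$. Closure under isomorphism and under induced substructures is formal, and countability of $\mathcal{C}$ up to isomorphism holds because $M$ has only countably many finite subsets. The amalgamation property is where homogeneity is used. Given $A, B_1, B_2 \in \mathcal{C}$ with $f_i : A \to B_i$, embed each $B_i$ into $M$ by some $h_i$; then $h_1 f_1$ and $h_2 f_2$ are two embeddings of $A$ into $M$, so $(h_2 f_2)(h_1 f_1)^{-1}$ is an isomorphism between finite substructures of $M$. Extend it to an automorphism $\alpha$ by homogeneity; then $\alpha h_1$ and $h_2$ embed $B_1$ and $B_2$ into $M$ agreeing on $A$, and the finite substructure of $M$ they generate serves as the amalgam $C \in \mathcal{C}$.

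For the sufficiency direction I would build $M$ as the union of an increasing chain $A_0 \subseteq A_1 \subseteq \cdots$ of members of $\mathcal{C}$. The bookkeeping is the crux: since $\mathcal{C}$ has only countably many isomorphism types and each $A_n$ is finite, there are only countably many \emph{requirements} of two kinds --- ``embed the next member of $\mathcal{C}$'' (handled by amalgamation over $\emptyset$, i.e.\ the joint embedding property, which the discussion before the theorem sanctions) and ``extend a given substructure of the current $A_n$ by a prescribed one-point extension in $\mathcal{C}$''. Interleaving these requirements in a single enumeration, I would satisfy each at some stage by invoking the amalgamation property to glue the required structure onto $A_n$ over the relevant common substructure, producing $A_{n+1} \in \mathcal{C}$. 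The limit $M = \bigcup_n A_n$ is countable; its age is contained in $\mathcal{C}$ because every finite substructure of $M$ lies in some $A_n$ and $\mathcal{C}$ is closed under substructures, and contains $\mathcal{C}$ because of the embedding requirements, so $\operatorname{age}(M) = \mathcal{C}$; and the extension requirements guarantee the extension property, hence homogeneity. The main obstacle is organizing the bookkeeping so that \emph{every} one-point extension over \emph{every} substructure that ever appears is eventually realized, even though the target $M$ keeps growing; this is a standard but delicate dovetailing argument, and amalgamation is precisely the tool that lets each individual step be completed inside $\mathcal{C}$.

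Finally, part (b) follows from the equivalence established at the outset. If $M_1$ and $M_2$ are countable homogeneous structures with the same age $\mathcal{C}$, then both enjoy the extension property. A back-and-forth construction between them --- identical in form to the proof of Fact~\ref{ch32:subsec2.1.2}, alternately extending the domain to cover $M_1$ and the range to cover $M_2$, each step licensed by the extension property of the appropriate structure --- produces an isomorphism $M_1 \to M_2$.
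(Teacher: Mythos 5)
Your proof is correct and is the canonical Fra\"{\i}ss\'e argument; the paper itself states this theorem without proof (citing Fra\"{\i}ss\'e), but the one-point extension property it records immediately after the statement as characterizing the Fra\"{\i}ss\'e limit is exactly the pivot of your argument, so you are on the paper's intended route. All four pieces --- extension property $\Leftrightarrow$ homogeneity via back-and-forth, amalgamation from homogeneity, the dovetailed chain construction using amalgamation over $\emptyset$ for joint embedding, and uniqueness by back-and-forth --- are sound as sketched.
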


A class $\mathcal{C}$ having the properties of this theorem is called
a \emph{Fra\"{\i}ss\'e class}, and the countable homogeneous structure
$M$ whose age is $\mathcal{C}$ is its \emph{Fra\"{i}ss\'{e} limit}. The
class of all finite graphs is Fra\"{i}ss\'{e} class;
its Fra\"{i}ss\'{e} limit is $R$. The Fra\"{i}ss\'{e}
limit of a class $\mathcal{C}$ is characterized by a condition
generalizing property ($\ast$): 
\emph{If $A$ and $B$ are members of the age of $M$ with $A\subseteq B$
and $|B| = |A| + 1$, then every embedding of $A$ into
$M$ can be extended to an embedding of $B$ into $M$.}

In the statement of the amalgamation property, when the two
structures $B_1, B_2$ are ``glued together'', the overlap may be
larger than $A$. We say that the class $\mathcal{C}$ has the
\emph{strong amalgamation property} if this doesn't occur;
formally, if the embeddings $g_1, g_2$ can be chosen so that, if
$b_1g_1 = b_2g_2$, then there exists $a \in A$ such that $b_1 =
af_1$ and $b_2 = af_2$. This property is equivalent to others we
have met.

\begin{proposition}\label{ch32:prop5.2} 
Let $M$ be the Fra\"{i}ss\'{e} limit of the
class $\mathcal{C}$, and $G = \Aut(M)$. Then the following are
equivalent:
\begin{itemize}
\item[(a)] $\mathcal{C}$ has the strong amalgamation property;
\item[(b)] $M \setminus A \cong M$ for any finite subset $A$ of $M$;
\item[(c)] the orbits of $G_A$ on $M \setminus A$ are infinite for any finite subset $A$ of $M$, where $G_A$
is the setwise stabiliser of $A$.
\end{itemize}
\end{proposition}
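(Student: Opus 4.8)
The plan is to establish the cycle $(a)\Rightarrow(c)\Rightarrow(b)\Rightarrow(a)$, using throughout the extension property that characterises the Fra\"{i}ss\'{e} limit (the generalisation of $(\ast)$ displayed after Theorem~\ref{ch32:them5.1}) together with homogeneity. I would first record two preliminaries. The first reconciles the setwise stabiliser in $(c)$ with the pointwise stabiliser $G_{(A)}$: since $[G_A : G_{(A)}]$ divides $|A|!$, a $G_A$-orbit on $M\setminus A$ is finite if and only if the $G_{(A)}$-orbit it contains is finite, so I may freely work with $G_{(A)}$. The second, an immediate consequence of homogeneity, is that for a finite $B\subseteq M$ the set of points of $M$ realising a fixed isomorphism type over $B$ is exactly one orbit of $G_{(B)}$: two points with the same type over $B$ determine an isomorphism of finite substructures fixing $B$ pointwise, which extends to an automorphism.

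For $(a)\Rightarrow(c)$, fix a finite $A\subseteq M$ and $x\in M\setminus A$, and put $B=A\cup\{x\}$. I would show the type of $x$ over $A$ is realised arbitrarily often. Strongly amalgamating $n$ copies of $B$ over $A$ yields a structure $C_n\in\mathcal{C}$ consisting of $A$ together with $n$ points each realising the type of $x$ over $A$, the \emph{strong} form of amalgamation being precisely what prevents these new points from being identified with one another or absorbed into $A$. Realising $C_n$ inside $M$ (it lies in the age) produces $n$ such points, all in the single $G_{(A)}$-orbit of $x$ by the second preliminary; hence that orbit, and therefore the $G_A$-orbit, is infinite.

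For $(c)\Rightarrow(b)$, I would verify the extension property for $M\setminus A$. Given $B_0\subseteq B_1$ in $\mathcal{C}$ with $|B_1|=|B_0|+1$ and an embedding $h\colon B_0\to M\setminus A$, the extension property of $M$ supplies some admissible image $z\in M$ for the extra point; but the set of all admissible images is the $G_{(h(B_0))}$-orbit of $z$, which is infinite by $(c)$, so I can choose $z\notin A$. Thus $M\setminus A$ is a countable structure whose age is contained in $\mathcal{C}$ and which satisfies the extension property; consequently its age is all of $\mathcal{C}$ and it is homogeneous, so by the uniqueness clause of Theorem~\ref{ch32:them5.1}(b) we get $M\setminus A\cong M$.

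For $(b)\Rightarrow(a)$, given a diagram $f_1\colon A\to B_1$ and $f_2\colon A\to B_2$, I would realise $B_1$ as a substructure of $M$ with $A\subseteq B_1$ and $f_1$ the inclusion, and set $D=B_1\setminus A$. Since $M\setminus D\cong M$ by $(b)$ and $A\subseteq M\setminus D$, the extension property inside $M\setminus D$ extends the inclusion of $A$ to an embedding $g_2\colon B_2\to M\setminus D$; as its image avoids $D$, the copies of $B_1$ and $B_2$ meet exactly in $A$, and the induced substructure on their union is the required strong amalgam. The step I expect to be the main obstacle is $(a)\Rightarrow(c)$: ordinary amalgamation only lets one glue two copies together, and it is the strong form that keeps the freshly adjoined realisations distinct, so that one genuinely obtains unboundedly many points of a single type. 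The other place demanding care is the translation, via homogeneity, between $1$-types over a finite set and orbits of its pointwise stabiliser, which is what lets me pass between the combinatorial condition $(a)$ and the group-theoretic condition $(c)$.
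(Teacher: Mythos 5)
The paper offers no proof of this proposition at all --- it simply refers the reader to Cameron \cite{ch32:bib6} and El-Zahar and Sauer \cite{ch32:bib15} --- so there is nothing to compare against line by line; your argument is essentially the standard one from those sources, and it is correct. The cycle $(a)\Rightarrow(c)\Rightarrow(b)\Rightarrow(a)$ works, and you have correctly identified the two points where care is needed: iterated strong amalgamation to manufacture unboundedly many realisations of a $1$-type, and the dictionary (via homogeneity) between isomorphism types over a finite set and orbits of its pointwise stabiliser. Two small points you should make explicit in a written version. First, the passage from two-fold to $n$-fold strong amalgamation is an induction (amalgamate $B$ with $C_{n-1}$ over $A$; strongness guarantees the new point avoids the image of $C_{n-1}$), and after embedding $C_n$ into $M$ you need homogeneity once more to move the image of $A$ back onto $A$ pointwise before invoking your orbit dictionary. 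Second, your first preliminary is phrased as if a $G_A$-orbit contains a single $G_{(A)}$-orbit; it may contain up to $|A|!$ of them, but since $G_{(A)}$ is normal in $G_A$ these are blocks permuted transitively by $G_A$ and hence all of the same cardinality, which gives the finiteness equivalence you want. Neither point is a gap, just a detail to spell out; the proof stands.
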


See Cameron \cite{ch32:bib6}, El-Zahar and Sauer \cite{ch32:bib15}.

A structure $M$ is called \emph{$\aleph_0$-categorical} if any countable
structure satisfying the same first-order sentences as $M$ is
isomorphic to $M$. (We must specify countability here: the upward
L\"{o}wenheim--Skolem theorem shows that, if $M$ is infinite, then
there are structures of arbitrarily large cardinality which satisfy
the same first-order sentences as $M$.)

\begin{proposition}\label{ch32:prop5.3} 
$R$ is $\aleph_0$-categorical.
\end{proposition}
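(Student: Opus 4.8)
The plan is to prove $\aleph_0$-categoricity of $R$ by exhibiting a direct isomorphism between any two countable models of its first-order theory. The cleanest route uses the back-and-forth argument already developed in the proof of Fact~\ref{ch32:subsec2.1.2}, so the real work is to show that property ($\ast$), which drove that argument, is a first-order consequence of the theory of $R$. Once that is established, the isomorphism construction is verbatim the back-and-forth machine of Fact~\ref{ch32:subsec2.1.2}.

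First I would observe that, for each fixed pair $(m,n)$, the instance of ($\ast$) asserting that for all distinct $u_1,\ldots,u_m,v_1,\ldots,v_n$ there is a correctly-joined $z$ is expressible as a single first-order sentence $\sigma_{m,n}$: it quantifies over finitely many vertices and uses only the adjacency relation and equality. Thus property ($\ast$) is captured not by one sentence but by the countable set $\{\sigma_{m,n} : m,n \ge 0\}$, each member of which holds in $R$ by Proposition~\ref{ch32:prop3.1} (or directly by the definition of $R$). Now let $\Gamma$ be any countable graph satisfying the same first-order sentences as $R$. Since each $\sigma_{m,n}$ is true in $R$, it is true in $\Gamma$; hence $\Gamma$ satisfies every instance of ($\ast$), so $\Gamma$ satisfies ($\ast$).

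Having shown that both $R$ and $\Gamma$ satisfy ($\ast$), I would invoke Fact~\ref{ch32:subsec2.1.2} directly: any two countable graphs satisfying ($\ast$) are isomorphic, so $\Gamma \cong R$. This is exactly the definition of $\aleph_0$-categoricity, completing the proof.

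The main obstacle, such as it is, lies entirely in the first step: recognizing that ($\ast$) is a \emph{schema} of first-order sentences rather than a single sentence, and confirming that each instance $\sigma_{m,n}$ really is first-order (it is, since $m$ and $n$ are fixed finite numbers, so no quantification over the infinite vertex set beyond a bounded block is needed). It is worth emphasizing in the write-up that ($\ast$) as a whole — ``for all finite configurations'' — is not itself a single first-order statement, and that what makes the argument go through is that elementary equivalence to $R$ transfers each of the countably many sentences $\sigma_{m,n}$ individually. Everything after that is supplied by results already proved.
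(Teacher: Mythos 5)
Your proof is correct and follows exactly the paper's route: the paper likewise translates ($\ast$) into the countable schema of first-order sentences $\sigma_{m,n}$ and (implicitly) relies on Fact~\ref{ch32:subsec2.1.2} to conclude that any countable elementarily equivalent graph is isomorphic to $R$. You merely spell out the final back-and-forth step that the paper leaves tacit.
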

\begin{proof}
Property ($\ast$) is not first-order as it stands,
but it can be translated into a countable set of first-order
sentences $\sigma_{m,n}$ (for $m, n \in \mathbb{N}$), where
$\sigma_{m,n}$ is the sentence
\[
(\forall u_1..u_mv_1..v_n)\bigg(\!\bigg({(u_1\neq
v_1)\& \ldots \& \atop (u_m \neq v_n)}\bigg)\rightarrow
(\exists z)\bigg({(z\sim u_1)\& \ldots \& (z\sim
u_m)\& \atop \neg(z\sim v_1)\& \ldots \& \neg(z\sim
v_n)}\bigg)\!\bigg).\qedhere
\]
\end{proof}

Once again this is an instance of a more general result. An
\emph{$n$-type} in a structure $M$ is an equivalence class of
$n$-tuples, where two tuples are equivalent if they satisfy the same
($n$-variable) first-order formulae. Now the following theorem was
proved by Engeler \cite{ch32:bib16}, Ryll-Nardzewski
\cite{ch32:bib44} and Svenonius \cite{ch32:bib49}:

\begin{theorem}\label{ch32:them5.2} 
For a countable first-order structure $M$, the
following conditions are equivalent:
\begin{itemize}
\item[(a)] $M$ is $\aleph_0$-categorical;
\item[(b)] $M$ has only finitely many $n$-types, for every $n$;
\item[(c)] the automorphism group of $M$ has only finitely many orbits on $M^n$, for every
$n$.
\end{itemize}
\end{theorem}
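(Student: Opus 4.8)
The plan is to establish that the three conditions are equivalent by proving the two biconditionals (b)$\Leftrightarrow$(c) and (a)$\Leftrightarrow$(b); the first is essentially group-theoretic and uses the back-and-forth device of Fact~\ref{ch32:subsec2.1.2}, while the second carries the genuinely model-theoretic content. Throughout write $T=\mathrm{Th}(M)$. One direction of the first biconditional is immediate: since an automorphism of $M$ preserves the truth of every first-order formula, any two tuples in the same $\Aut(M)$-orbit have the same type, so the partition of $M^n$ into orbits refines the partition into $n$-types. Hence finitely many orbits forces finitely many types, giving (c)$\Rightarrow$(b).

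For (b)$\Rightarrow$(c) I would show that the refinement loses nothing, i.e.\ that each $n$-type is a single orbit. Suppose there are finitely many $n$-types for every $n$, and let $\bar a,\bar b\in M^n$ realize the same type. Because there are only finitely many $(n{+}1)$-types, each is \emph{separated} from the others by a single formula: for a type $p$ take the conjunction $\psi$ of formulas distinguishing $p$ from each of the finitely many other types, so that within $M$ exactly the $p$-tuples satisfy $\psi$. Now run back-and-forth as in Fact~\ref{ch32:subsec2.1.2}, maintaining a finite partial map under which corresponding tuples have equal type (in particular they agree on atomic formulas, so the partial maps are isomorphisms of induced substructures). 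To extend over a new element $c$, let $\psi$ separate the type of $\bar a c$; then $M\models\exists z\,\psi(\bar a,z)$, and since $\bar a,\bar b$ have the same type also $M\models\exists z\,\psi(\bar b,z)$, so a suitable image $d$ (the one of least index, to avoid choice) exists and $\bar b d$ again has the right type. Alternating domain and range yields an automorphism carrying $\bar a$ to $\bar b$; thus types and orbits coincide and (b)$\Rightarrow$(c).

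Finally I would treat (a)$\Leftrightarrow$(b). For (a)$\Rightarrow$(b) I argue contrapositively: if $M$ realizes infinitely many $n$-types then the Stone space $S_n(T)$ is infinite, hence (being compact Hausdorff) has a non-isolated, so non-principal, point $p$; the Omitting Types Theorem supplies a countable model of $T$ omitting $p$, while $p$ is realized in some countable model of $T$, and these two non-isomorphic models witness that $M$ is not $\aleph_0$-categorical. For (b)$\Rightarrow$(a) the key observation is that the separating formulas $\psi$ above are in fact \emph{complete modulo} $T$: since all $p$-tuples of $M$ agree on any formula $\chi$ and $\psi$ picks out exactly the $p$-tuples, $M$---and therefore $T$---satisfies $\forall\bar x\,(\psi\to\chi)$ or $\forall\bar x\,(\psi\to\neg\chi)$. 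Consequently $S_n(T)$ is finite and every type is principal, hence realized in every countable model $N\equiv M$; a back-and-forth between $M$ and $N$, matching tuples of equal type exactly as before, produces an isomorphism, so $M$ is $\aleph_0$-categorical. I expect the main obstacle to be this last step: upgrading ``$\psi$ separates $p$ inside $M$'' to ``$\psi$ is complete modulo $T$,'' together with the invocation of the Omitting Types Theorem, are the two points where the passage between a fixed model and its full elementary theory must be handled carefully.
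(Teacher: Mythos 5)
The paper does not prove this theorem; it is stated with attributions to Engeler, Ryll-Nardzewski and Svenonius, so there is no in-paper argument to compare against. Your proof is the standard textbook argument (as in Hodges) and it is correct. The decomposition into (b)$\Leftrightarrow$(c) via isolating formulas plus back-and-forth inside $M$, and (a)$\Leftrightarrow$(b) via the Stone space $S_n(T)$, the Omitting Types Theorem, and the observation that the separating formulas are complete modulo $T$, is exactly how the result is usually established. You also correctly negotiate the one genuinely delicate point, which the paper itself flags in Remark~\ref{ch32:rem5.1}: the chapter defines an $n$-type as an equivalence class of tuples \emph{realized in} $M$, whereas the Omitting Types step and the completeness-modulo-$T$ step require passing to types of the theory $T=\mathrm{Th}(M)$; your argument that $T\models\forall\bar x\,\bigvee_p\psi_p$ and that each $\psi_p$ decides every formula is precisely the bridge needed to conclude that $S_n(T)$ is finite with all points isolated, so that every countable $N\equiv M$ realizes exactly the same types and the final back-and-forth goes through. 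The only cosmetic remark is that in (b)$\Rightarrow$(c) you should note explicitly that the union of the partial maps preserves atomic formulas in both directions (equality of types is symmetric in the relevant sense), but this is immediate from your setup.
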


Note that the equivalence of conditions (a) (axiomatizability) and
(c) (symmetry) is in the spirit of Klein's Erlanger Programm. The
fact that $R$ satisfies (c) is a consequence of its homogeneity,
since $(x_1,\ldots, x_n)$ and $(y_1,\ldots,y_n)$ lie in the same
orbit of Aut($R$) if and only if the map $(x_i \rightarrow y_i)$ $(i
= 1,\ldots, n)$ is an isomorphism of induced subgraphs, and there
are only finitely many $n$-vertex graphs.

\begin{remark}\label{ch32:rem5.1}\rm 
The general definition of an $n$-type in
first-order logic is more complicated than the one given here:
roughly, it is a maximal set of $n$-variable formulae consistent
with a given theory. I have used the fact that, in an
$\aleph_0$-categorical structure, any $n$-type is $realized$ (i.e.,
satisfied by some tuple) --- this is a consequence of the
G\"{o}del--Henkin completeness theorem and the downward
L\"{o}wenheim--Skolem theorem. See Hodges \cite{ch32:bib28} for more
details.
\end{remark}

Some properties of $R$ can be deduced from either its homogeneity or
its $\aleph_0$-categoricity. For example,
Proposition~\ref{ch32:prop4.1} generalizes. We say that a countable
relational structure $M$ is \emph{universal} (or \emph{rich for its
age}, in Fra\"{i}ss\'{e}'s terminology \cite{ch32:bib22}) if every
countable structure $N$ whose age is contained in that of $M$ (i.e.,
which is \emph{younger} than $M$) is embeddable in $M$.

\begin{theorem}\label{ch32:them5.3} 
If $M$ is either $\aleph_0$-categorical or homogeneous, then it is universal.
\end{theorem}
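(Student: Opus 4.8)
The plan is to treat the two hypotheses separately, since the arguments are genuinely different in character. If $M$ is homogeneous I would extend a partial embedding one element at a time, whereas if $M$ is $\aleph_0$-categorical the stepwise construction can fail and I would instead realise the whole of $N$ at once inside some model of $\mathrm{Th}(M)$ and then invoke categoricity. In both cases the goal is the same: given a countable $N$ that is younger than $M$ (i.e.\ $\mathrm{age}(N)\subseteq\mathrm{age}(M)$), produce an embedding $N\to M$.

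First, suppose $M$ is homogeneous, and enumerate the underlying set of $N$ as $\{x_1,x_2,\ldots\}$. I would build a chain of embeddings $f_n$ of the induced substructure on $\{x_1,\ldots,x_n\}$ into $M$ by ``going forth'' exactly as in Proposition~\ref{ch32:prop4.1}, the one new ingredient being how to extend $f_n$ to $x_{n+1}$. Write $A$ for the structure induced on $\{x_1,\ldots,x_n\}$ and $B$ for that induced on $\{x_1,\ldots,x_{n+1}\}$. Since $B$ lies in the age of $N$, hence in that of $M$, there is some embedding $g\colon B\to M$. Both $f_n$ and $g|_A$ embed $A$ into $M$, so $f_n\circ(g|_A)^{-1}$ is an isomorphism between the finite induced substructures $g(A)$ and $f_n(A)$ of $M$; by homogeneity it extends to an automorphism $\alpha$ of $M$. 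Then $f_{n+1}=\alpha\circ g$ is an embedding of $B$ that agrees with $f_n$ on $A$ and places $x_{n+1}$ in its domain, and $f=\bigcup_n f_n$ is the required embedding of $N$ into $M$.

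Second, suppose $M$ is $\aleph_0$-categorical. Here I would argue by compactness. Enumerate $N$ as $\{a_1,a_2,\ldots\}$, introduce a new constant $c_i$ for each $a_i$, and let $\Delta$ be the atomic diagram of $N$, namely all atomic and negated-atomic sentences in these constants that hold in $N$. Set $T=\mathrm{Th}(M)$. The key claim is that $T\cup\Delta$ is consistent: any finite subset of $\Delta$ mentions only finitely many constants, say those for $a_1,\ldots,a_k$, and merely records (part of) the isomorphism type of the substructure induced on $\{a_1,\ldots,a_k\}$; since this finite structure lies in $\mathrm{age}(N)\subseteq\mathrm{age}(M)$ it embeds in $M$, so the finite subset is satisfiable in $M\models T$. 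By compactness $T\cup\Delta$ has a model $M'$, and by the downward L\"{o}wenheim--Skolem theorem I may pass to a countable elementary substructure $M''\preceq M'$ containing all the interpretations $c_i^{M'}$. In $M''$ the map $a_i\mapsto c_i^{M''}$ preserves every atomic and negated-atomic formula, so it embeds $N$ into $M''$; and $M''$ is a countable model of $T$, whence $\aleph_0$-categoricity gives $M''\cong M$ and so $N$ embeds in $M$.

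The main obstacle is exactly the reason the two cases cannot be merged: the smooth forth argument available for homogeneous $M$ has no analogue for a general $\aleph_0$-categorical $M$, which need not enjoy the one-point extension property of a Fra\"{i}ss\'{e} limit, so a given embedding of $A$ may simply refuse to extend. Recognising that one must therefore abandon the stepwise construction and realise the \emph{entire} atomic diagram of $N$ simultaneously is the real content of that case; thereafter the finite-consistency step is the only nontrivial point, and it is precisely where the hypothesis that $N$ is younger than $M$ is used. I should also note, to justify handling both cases, that the two hypotheses are independent: a homogeneous structure over an infinite relational language need not be $\aleph_0$-categorical, and an $\aleph_0$-categorical structure need not be homogeneous.
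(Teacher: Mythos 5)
Your proof is correct. In the homogeneous case you follow essentially the paper's route: the paper ``goes forth'' as in Proposition~\ref{ch32:prop4.1}, using the one-point extension property of a homogeneous structure, and your derivation of that property (take any embedding $g$ of $B$ into $M$, note that $f_n\circ(g|_A)^{-1}$ is an isomorphism of finite induced substructures, extend it to an automorphism $\alpha$, and set $f_{n+1}=\alpha\circ g$) is exactly the intended argument, just written out. In the $\aleph_0$-categorical case, however, you take a genuinely different route. The paper (following Cameron \cite{ch32:bib7}) uses the Engeler--Ryll-Nardzewski--Svenonius theorem (Theorem~\ref{ch32:them5.2}) together with K\"onig's Infinity Lemma: since $\Aut(M)$ has only finitely many orbits on $M^n$, the orbits of embeddings into $M$ of the initial segments of $N$ form an infinite, finitely branching tree with nonempty levels, an infinite branch gives a coherent sequence of orbits, and transitivity within each orbit lets one lift the branch to an increasing chain of finite embeddings whose union embeds $N$. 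Your argument instead realises the entire atomic diagram of $N$ at once: finite satisfiability comes from the age hypothesis, compactness yields a model of $\mathrm{Th}(M)$ plus the diagram, downward L\"owenheim--Skolem returns to a countable model, and categoricity identifies that model with $M$. Both are valid. The paper's version stays inside $M$ and exploits the combinatorial content of $\aleph_0$-categoricity (finitely many orbits), which is natural for this survey; yours is shorter and uses only the definition of $\aleph_0$-categoricity, at the price of a detour through a possibly uncountable model and the compactness machinery, and it quietly uses countability of the language when invoking downward L\"owenheim--Skolem --- harmless here, since that is the standing assumption whenever $\aleph_0$-categoricity is discussed.
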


The proof for homogeneous structures follows that of
Proposition~\ref{ch32:prop4.1}, using the analogue of property
($\ast$) described above. The argument for $\aleph_0$-categorical
structures is a bit more subtle, using Theorem 5.4 and K\"{o}nig's
Infinity Lemma: see Cameron \cite{ch32:bib7}.

\section{First-order theory of random graphs}
\label{ch32:sec2.6}

The graph $R$ controls the first-order theory of finite random
graphs, in a manner I now describe. This theory is due to Glebskii
{\it et al.} \cite{ch32:bib24}, Fagin \cite{ch32:bib20}, and
Blass and Harary \cite{ch32:bib2}. A property P holds in
\emph{almost all finite random graphs} if the proportion of
$N$-vertex graphs which satisfy P tends to $1$ as $N \rightarrow
\infty$. Recall the sentences $\sigma_{m,n}$ which axiomatize $R$.

\begin{theorem}\label{ch32:them6.1} 
Let $\theta$ be a first-order sentence in the
language of graph theory. Then the following are equivalent:
\begin{itemize}
\item[(a)] $\theta$ holds in almost all finite random graphs;
\item[(b)] $\theta$ holds in the graph $R$;
\item[(c)] $\theta$ is a logical consequence of $\{\sigma_{m,n} : m, n \in \mathbb{N}\}$.
\end{itemize}
\end{theorem}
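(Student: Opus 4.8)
The plan is to treat the set $T = \{\sigma_{m,n} : m,n \in \mathbb{N}\}$ as a first-order theory and to show first that it is \emph{complete}; once that is done, the equivalence of (b) and (c) is formal, and the only genuinely probabilistic input is a finite analogue of Fact~\ref{ch32:subsec2.1.1}. To prove completeness, note that $R$ satisfies every $\sigma_{m,n}$ (this is just property ($\ast$)), so $R \models T$. Conversely, any countable model of $T$ satisfies ($\ast$) and hence is isomorphic to $R$ by Fact~\ref{ch32:subsec2.1.2}, so $T$ is $\aleph_0$-categorical (compare Proposition~\ref{ch32:prop5.3}). Since a graph satisfying enough of the $\sigma_{m,n}$ is infinite, $T$ has no finite models, so the {\L}o\'s--Vaught test applies and $T$ is complete. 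Completeness gives, for every sentence $\theta$, that $T \vdash \theta$ or $T \vdash \neg\theta$; and because $R \models T$, we have $R \models \theta$ if and only if $T \vdash \theta$. This is exactly the equivalence of (b) and (c).

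Next I would prove (c)$\Rightarrow$(a). The key lemma is the finite counterpart of Fact~\ref{ch32:subsec2.1.1}: each individual axiom $\sigma_{m,n}$ holds in almost all finite random graphs. For fixed $m,n$ and fixed distinct $u_1,\dots,u_m,v_1,\dots,v_n$, the probability that none of the remaining $N-(m+n)$ vertices is correctly joined is $(1 - 2^{-(m+n)})^{N-m-n}$, which tends to $0$; a union bound over the $O(N^{m+n})$ choices of witnesses keeps the total failure probability tending to $0$, so $\sigma_{m,n}$ holds almost surely. Now if $\theta$ is a logical consequence of $T$, then by the compactness theorem $\theta$ already follows from finitely many of the $\sigma_{m,n}$. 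Each of these finitely many axioms holds almost surely, and a finite intersection of events of limiting probability $1$ again has limiting probability $1$; on that event $\theta$ holds, so $\theta$ holds in almost all finite random graphs.

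Finally, (a)$\Rightarrow$(c) follows by combining completeness with the implication just established. If (a) holds but (c) fails, then by completeness $T \vdash \neg\theta$, so $\neg\theta$ holds almost surely by (c)$\Rightarrow$(a) applied to $\neg\theta$; but $\theta$ holds almost surely as well, and the proportions of $N$-vertex graphs satisfying $\theta$ and satisfying $\neg\theta$ cannot both tend to $1$. This contradiction closes the cycle; as a by-product one obtains the zero-one law, that every sentence either holds almost surely or fails almost surely.

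I expect the main obstacle to be the passage from ``every axiom holds almost surely'' to ``$\theta$ holds almost surely''. The estimate for a single $\sigma_{m,n}$ is a routine union-bound mirroring Fact~\ref{ch32:subsec2.1.1}, but an infinite conjunction of almost-sure events need not be almost sure, so the reduction to finitely many axioms via compactness is indispensable and is the conceptual crux. Everything else is either the finite probabilistic calculation or the bookkeeping of completeness supplied by the {\L}o\'s--Vaught test.
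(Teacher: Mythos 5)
Your proposal is correct and follows essentially the same route as the paper: the equivalence of (b) and (c) comes from the completeness of the theory $\{\sigma_{m,n}\}$ (which the paper leaves implicit in the phrase ``the sentences $\sigma_{m,n}$ axiomatize $R$'' and you make explicit via $\aleph_0$-categoricity and the {\L}o\'s--Vaught test), (c)$\Rightarrow$(a) uses the identical union bound $N^{m+n}(1-2^{-(m+n)})^{N-m-n}\to 0$ followed by reduction to finitely many axioms (your appeal to compactness and the paper's appeal to finiteness of proofs are interchangeable here), and (a)$\Rightarrow$(c) is the same contrapositive argument via $\neg\theta$. You have also correctly identified the finitary reduction as the crux, exactly as the paper's proof is organised.
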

\begin{proof}
The equivalence of (b) and (c) is immediate from the
G\"{o}del--Henkin completeness theorem for first-order logic and the
fact that the sentences $\sigma_{m,n}$ axiomatize $R$.

We show that (c) implies (a). First we show that $\sigma_{m,n}$
holds in almost all finite random graphs. The probability that it
fails in an $N$-vertex graph is not greater than $N^{m+n}(1 -
\frac{1}{2^{m+n}})^{N - m- n}$, since there are at most $N^{ m +n}$
ways of choosing $m + n$ distinct points, and $(1 -
\frac{1}{2^{m+n}})^{N - m- n}$ is the probability that no further
point is correctly joined. This probability tends to $0$ as $N
\rightarrow \infty$.

Now let $\theta$ be an arbitrary sentence satisfying (c). Since
proofs in first-order logic are finite, the deduction of $\theta$
involves only a finite set $\Sigma$ of sentences $\sigma_{m,n}$. It
follows from the last paragraph that almost all finite graphs
satisfy the sentences in $\Sigma$; so almost all satisfy $\theta$
too.

Finally, we show that not (c) implies not (a). If (c) fails, then
$\theta$ doesn't hold in $R$, so $(\neg \theta)$ holds in $R$, so
$(\neg \theta)$ is a logical consequence of the sentences
$\sigma_{m,n}$. By the preceding paragraph, $(\neg \theta)$ holds in
almost all random graphs.
\end{proof}

The last part of the argument shows that there is a zero-one law:

\begin{corollary}\label{ch32:coro6.1} 
Let $\theta$ be a sentence in the language of
graph theory. Then either $\theta$ holds in almost all finite random
graphs, or it holds in almost none.
\end{corollary}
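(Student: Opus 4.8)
The plan is to derive this immediately from Theorem~\ref{ch32:them6.1}, whose equivalence of conditions (a) and (b) does essentially all the work; the only genuine additional input is that $R$ is a single fixed structure, so that the law of excluded middle applies to it. For any first-order sentence $\theta$, exactly one of $\theta$ and $\neg\theta$ holds in $R$, and I would split into these two cases.

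In the case $R \models \theta$, the equivalence (a)$\Leftrightarrow$(b) of Theorem~\ref{ch32:them6.1} gives at once that $\theta$ holds in almost all finite random graphs. In the remaining case $R \models \neg\theta$, I would apply the same equivalence to the sentence $\neg\theta$ in place of $\theta$: since $\neg\theta$ holds in $R$, it holds in almost all finite random graphs. It then remains only to translate this back into a statement about $\theta$ itself. For each $N$, the proportion of $N$-vertex graphs satisfying $\theta$ and the proportion satisfying $\neg\theta$ sum to $1$; hence the former tends to $0$ exactly as the latter tends to $1$. Thus $\theta$ holds in almost no finite random graph, which together with the first case yields the claimed dichotomy.

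There is no real obstacle here, since this is precisely the content of the final paragraph of the proof of Theorem~\ref{ch32:them6.1}, now isolated as a self-contained statement. The only point meriting a word of care is the passage from ``$\neg\theta$ holds in almost all'' to ``$\theta$ holds in almost none,'' which rests on the complementary-proportions observation above rather than on any further probabilistic estimate.
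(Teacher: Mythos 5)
Your proposal is correct and matches the paper's own argument: the paper derives the corollary from the final paragraph of the proof of Theorem~\ref{ch32:them6.1}, which is exactly your case split on whether $R\models\theta$ or $R\models\neg\theta$ combined with the equivalence of (a) and (b). The complementary-proportions remark you add is implicit in the paper but harmless to make explicit.
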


It should be stressed that, striking though this result is, most
interesting graph properties (connectedness, hamiltonicity, etc.)
are not first-order, and most interesting results on finite random
graphs are obtained by letting the probability of an edge tend to
zero in a specified manner as $N \rightarrow \infty$, rather than
keeping it constant (see Bollob\'{a}s \cite{ch32:bib3}).
Nevertheless, we will see a recent application of
Theorem~\ref{ch32:them6.1} later.

\section{Measure and category}
\label{ch32:sec2.7}

When the existence of an infinite object can be proved by a
probabilistic argument (as we did with $R$ in
Section~\ref{ch32:sec2.1}), it is often the case that an
alternative argument using the concept of Baire category can be
found. In this section, I will sketch the tools briefly. See Oxtoby
\cite{ch32:bib39} for a discussion of measure and Baire category.

In a topological space, a set is \emph{dense} if it meets every nonempty
open set; a set is \emph{residual} if it contains a countable
intersection of open dense sets. The \emph{Baire category theorem}
states:

\begin{theorem}\label{ch32:them7.1} 
In a complete metric space, any residual set is non-empty.
\end{theorem}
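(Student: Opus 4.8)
The plan is to prove the Baire Category Theorem by constructing a point in the residual set as the limit of a carefully chosen Cauchy sequence, exploiting completeness of the metric space to guarantee that the limit exists and lies in the intersection. Let $(X,d)$ be a complete metric space, and suppose $R \supseteq \bigcap_{n=1}^{\infty} G_n$ where each $G_n$ is open and dense. It suffices to show that $\bigcap_n G_n$ is non-empty, since $R$ contains this set.

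First I would use density and openness of $G_1$ to pick any point $x_1 \in G_1$ together with a radius $r_1 \in (0,1)$ such that the closed ball $\overline{B}(x_1, r_1) \subseteq G_1$; this is possible because $G_1$ is open, so some open ball around $x_1$ lies in $G_1$, and I shrink its radius to get a closed ball inside. Then I would proceed inductively: given $x_n$ and $r_n$, the density of $G_{n+1}$ means it meets the open ball $B(x_n, r_n)$, so I choose $x_{n+1}$ in this intersection, and using that $G_{n+1} \cap B(x_n, r_n)$ is open I select a radius $r_{n+1} \in (0, r_n/2)$ with
\[
\overline{B}(x_{n+1}, r_{n+1}) \subseteq G_{n+1} \cap B(x_n, r_n).
\]
The nesting of these closed balls together with the geometric shrinking of the radii is the engine of the proof.

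Next I would verify that $(x_n)$ is Cauchy. Since $\overline{B}(x_{n+1}, r_{n+1}) \subseteq B(x_n, r_n)$, the centres satisfy $x_m \in \overline{B}(x_n, r_n)$ for all $m \ge n$, and because $r_n < 2^{-(n-1)} r_1 \to 0$, the tail distances $d(x_m, x_n)$ become arbitrarily small. By completeness, $x_n \to x$ for some $x \in X$. Each closed ball $\overline{B}(x_n, r_n)$ is closed and contains all $x_m$ with $m \ge n$, so it contains the limit $x$; hence $x \in \overline{B}(x_n, r_n) \subseteq G_n$ for every $n$, giving $x \in \bigcap_n G_n \subseteq R$, so $R$ is non-empty.

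The step I expect to require the most care is the inductive selection of the nested closed balls: I must ensure at each stage that the closed ball (not merely an open one) fits inside the intersection $G_{n+1} \cap B(x_n, r_n)$, which relies on first landing inside the open intersection and then shrinking the radius, and I must simultaneously force $r_{n+1} \le r_n/2$ so that the radii form a null sequence. Keeping these two constraints compatible is routine but is the genuine content; once the balls are correctly nested with shrinking radii, the appeal to completeness and the closedness of the balls is immediate.
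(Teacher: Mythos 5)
Your proof is correct: this is the standard nested-closed-balls argument for the Baire category theorem, and the paper itself states this result without proof (referring the reader to Oxtoby), so there is no competing argument to compare against. The inductive construction, the Cauchy estimate $d(x_m,x_n)\le r_n<2^{-(n-1)}r_1$, and the use of the closedness of each ball $\overline{B}(x_n,r_n)$ to place the limit inside every $G_n$ are all in order; the only implicit hypothesis is that the space is non-empty, which you need in order to choose $x_1$ and which is the usual tacit convention in stating the theorem.
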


(The analogous statement for probability is that a set which
contains a countable intersection of sets of measure $1$ is non-empty.
We used this to prove Fact~\ref{ch32:subsec2.1.1}.)

The simplest situation concerns the space $2^{\mathbb{N}}$ of all
infinite sequences of zeros and ones. This is a probability space,
with the ``coin-tossing measure'' --- this was the basis of our
earlier discussion --- and also a complete metric space, where we
define $d(x, y) = \frac{1}{2^n}$ if the sequences $x$ and $y$ agree
in positions $0, 1, \ldots, n - 1$ and disagree in position $n$.
Now the topological concepts translate into combinatorial ones as
follows. A set $S$ of sequences is open if and only if it is
\emph{finitely determined}, i.e., any $x \in S$ has a finite
initial segment such that all sequences with this initial segment
are in $S$. A set $S$ is dense if and only if it is \emph{always
reachable}, i.e., any finite sequence has a continuation lying in
$S$. Now it is a simple exercise to prove the Baire category theorem
for this space, and indeed to show that a residual set is dense and
has cardinality $2^{\aleph_0}$. We will say that ``almost all
sequences have property P (in the sense of Baire category)'' if the
set of sequences which have property P is residual.

We can describe countable graphs by binary sequences: take a fixed
enumeration of the $2$-element sets of vertices, and regard the
sequence as the characteristic function of the edge set of the
graph. This gives meaning to the phrase ``almost all graphs (in the
sense of Baire category)''. Now, by analogy with
Fact~\ref{ch32:subsec2.1.1}, we have:

\begin{fact}\label{ch32:subsec2.7.1} 
Almost all countable graphs (in the
sense of either measure or Baire category) have property ($\ast$).
\end{fact}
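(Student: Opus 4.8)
The plan is to establish the two halves of the statement separately, one for measure and one for Baire category, since the two notions of ``almost all'' require genuinely different arguments even though they run in parallel. For the measure half, there is essentially nothing to do: this is exactly the content of Fact~\ref{ch32:subsec2.1.1}, which we have already proved. I would simply point back to that proof, perhaps reminding the reader that the coin-tossing measure on $2^{\mathbb{N}}$ is precisely the measure under which edges are chosen independently with probability $\frac{1}{2}$, so that the earlier null-set computation applies verbatim once we fix the bijection between graphs and binary sequences described just above the statement.

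The substance lies in the Baire category half, and my plan is to mirror the structure of the measure argument step by step. First I would fix, for each choice of finitely many distinct vertices $u_1,\ldots,u_m,v_1,\ldots,v_n$, the set $S_{u,v}$ of graphs in which some vertex $z$ is correctly joined to these; property ($\ast$) is then the intersection of all the $S_{u,v}$. Since there are only countably many such tuples, it suffices (by the characterization of residual sets, together with the translations ``open $=$ finitely determined'' and ``dense $=$ always reachable'' recorded in the preceding paragraphs) to show that each $S_{u,v}$ is open and dense; a countable intersection of open dense sets is residual by definition. Checking openness is routine: membership in $S_{u,v}$ is witnessed by a single correctly-joined vertex $z$, and this witness depends only on the finitely many edges between $z$ and the $u_i, v_j$, so any graph agreeing with a member of $S_{u,v}$ on that finite initial segment also lies in $S_{u,v}$.

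The one step requiring a little care is density, i.e.\ showing each $S_{u,v}$ is always reachable. Here I would take an arbitrary finite initial segment --- equivalently, a graph specified on finitely many edges --- and exhibit a continuation lying in $S_{u,v}$. Because only finitely many adjacencies are constrained, there are infinitely many potential vertices $z$ whose edges to $u_1,\ldots,u_m,v_1,\ldots,v_n$ have not yet been determined; I would pick one such $z$ and extend the sequence so that $z$ is adjacent to every $u_i$ and to no $v_j$. This continuation lies in $S_{u,v}$, so the set is dense. I expect this density verification to be the main (indeed the only) obstacle, and it is mild: the essential point is simply that a finite initial segment leaves infinitely many vertices ``free,'' so a correct join can always be arranged. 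With openness and density in hand for every tuple, the intersection is residual, and the Baire category theorem (Theorem~\ref{ch32:them7.1}) guarantees the set of graphs with ($\ast$) is not only non-empty but, as noted earlier, of full cardinality.
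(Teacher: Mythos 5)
Your proof is correct and is precisely the argument the paper intends: the text merely records that ``the proof is an easy exercise'' and that the Baire category case is simpler because ``no limit is required'' --- which is exactly the point of your density verification, where a single unconstrained witness $z$ suffices. Both halves (quoting Fact~\ref{ch32:subsec2.1.1} for the measure case, and showing that each instance of ($\ast$) defines an open dense set so that their countable intersection is residual) follow the standard route, so there is nothing to add.
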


The proof is an easy exercise. In fact, it is simpler for Baire
category than for measure --- no limit is required!

In the same way, almost all binary sequences (in either sense) are
universal (as defined in Section~\ref{ch32:sec2.2}).

A binary sequence defines a path in the binary tree of countable
height, if we start at the root and interpret $0$ and $1$ as instructions
to take the left or right branch at any node. More generally, given
any countable tree, the set of paths is a complete metric space,
where we define the distance between two paths to be $\frac{1}{2^n}$
if they first split apart at level $n$ in the tree. So the concept
of Baire category is applicable. The combinatorial interpretation of
open and dense sets is similar to that given for the binary case.

For example, the age of a countable relational structure $M$ can be
described by a tree: nodes at level $n$ are structures in the age
which have point set $\{0, 1, \ldots,  n-1\}$, and nodes $X_n,
X_{n +1}$ at levels $n$ and $n +1$ are declared to be adjacent if
the induced structure of $X_{n+1}$ on the set $\{0, 1, \ldots,
n-1\}$ is $X_n$. A path in this tree uniquely describes a structure
$N$ on the natural numbers which is younger than $M$, and
conversely. Now Fact~\ref{ch32:subsec2.7.1} generalizes as follows:

\begin{proposition}\label{ch32:prop7.1} 
If $M$ is a countable homogeneous relational
structure, then almost all countable structures younger than $M$ are
isomorphic to $M$.
\end{proposition}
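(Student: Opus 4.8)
The plan is to reduce the statement to the \emph{extension property} that characterises the Fra\"{\i}ss\'e limit, and then to show that this property defines a residual set of paths in the tree. By Theorem~\ref{ch32:them5.1}, the age $\mathcal{C}$ of $M$ is a Fra\"{\i}ss\'e class and $M$ is its Fra\"{\i}ss\'e limit, characterised among countable structures of age $\mathcal{C}$ by the extension property: every embedding of some $A\in\mathcal{C}$ into the structure extends to an embedding of any one-point extension $B\supseteq A$ lying in $\mathcal{C}$. First I would check that, for a structure $N$ younger than $M$, one has $N\cong M$ if and only if $N$ satisfies this extension property. The forward implication is immediate, since $M$ itself has the property and it is isomorphism-invariant. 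For the converse, the extension property lets one embed any member of $\mathcal{C}$ into $N$ by adjoining one point at a time, starting from the empty structure, so the age of $N$ is all of $\mathcal{C}$; uniqueness in Theorem~\ref{ch32:them5.1}(b) then gives $N\cong M$. Thus it suffices to prove that the extension property holds for a residual set of paths.

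Next I would exhibit the required countable family of dense open sets. Every path in the tree is by construction a structure on $\mathbb{N}$ younger than $M$, so this tree of paths is exactly the ambient complete metric space. For each finite $S\subseteq\mathbb{N}$ and each one-point extension $D\in\mathcal{C}$ with point set $S\cup\{\ast\}$, let $U_{S,D}$ be the set of paths $N$ such that \emph{either} $N|_S$ differs from $D|_S$, \emph{or} there is some $z\in\mathbb{N}\setminus S$ for which the induced substructure $N|_{S\cup\{z\}}$ is isomorphic to $D$ by the map fixing $S$ pointwise and sending $z\mapsto\ast$. There are only countably many pairs $(S,D)$, and the extension property for $N$ is precisely the assertion that $N\in\bigcap_{S,D}U_{S,D}$. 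Each $U_{S,D}$ is open, since membership is always witnessed by a finite initial segment of the path: the inequality $N|_S\neq D|_S$ depends only on the relations among the points of $S$, and the existence of a witness $z$ depends only on the relations among $S\cup\{z\}$.

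The crux is density of $U_{S,D}$, and here the amalgamation property does the work. Given any node $N_0$ of the tree, after lengthening the initial segment if necessary I may assume its point set $\{0,\ldots,n-1\}$ contains $S$. If $N_0|_S\neq D|_S$, then every path through $N_0$ already lies in $U_{S,D}$. Otherwise $N_0|_S=D|_S$, and I would amalgamate $N_0$ and $D$ over their common restriction on $S$: the amalgamation property yields some $C\in\mathcal{C}$ into which both embed, agreeing on the copy of $S$. The key observation, where I expect the main difficulty to lie, is a dichotomy about the image of $\ast$ in $C$. If that image is distinct from the images of $\{0,\ldots,n-1\}$, then their union is a structure in $\mathcal{C}$ on $n+1$ points extending $N_0$ and realising $D$ at a fresh point, which I relabel to obtain a node of level $n+1$ lying in $U_{S,D}$. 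If instead the image of $\ast$ coincides with the image of some $j\in\{0,\ldots,n-1\}$, then, since $\ast$ cannot be identified with a point of $S$, we have $j\notin S$ and the induced substructure on $S\cup\{j\}$ is already a copy of $D$, so $N_0$ itself lies in $U_{S,D}$. In either case a path through $N_0$ meets $U_{S,D}$, establishing density. Notably this argument needs only ordinary amalgamation, not strong amalgamation, precisely because an amalgam that collapses the new point onto an old one hands us a witness for free.

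It then follows that $\bigcap_{S,D}U_{S,D}$ is residual, and by the reduction in the first paragraph this set consists exactly of the paths isomorphic to $M$; hence almost all countable structures younger than $M$ are isomorphic to $M$. The Baire category theorem (Theorem~\ref{ch32:them7.1}) shows in addition that this set is non-empty, recovering the existence of $M$. The main obstacle throughout is organising the extension property into a countable family that is simultaneously open and dense; the ``either/or'' shape of $U_{S,D}$ is chosen exactly so that openness is automatic and the density argument can fall back on a pre-existing witness whenever amalgamation collapses the new point.
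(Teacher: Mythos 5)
Your proof is correct and executes exactly the framework the paper sets up in Section~\ref{ch32:sec2.7} (the tree of labelled structures in the age of $M$, with open $=$ finitely determined and dense $=$ always reachable), which is all the paper offers, since it states the proposition without proof as a generalization of Fact~\ref{ch32:subsec2.7.1}. The one genuinely delicate point --- that ordinary (not strong) amalgamation suffices for density, because an amalgam collapsing the new point onto an old one already supplies a witness inside the current node --- is identified and handled correctly.
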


It is possible to formulate analogous concepts in the
measure-theoretic framework, though with more difficulty. But the
results are not so straightforward. For example, almost all finite
triangle-free graphs are bipartite (a result of Erd\H{o}s, Kleitman
and Rothschild \cite{ch32:bib17}); so the ``random countable
triangle-free graph'' is almost surely bipartite. (In fact, it is
almost surely isomorphic to the ``random countable bipartite graph'',
obtained by taking two disjoint countable sets and selecting edges
between them at random.)

A structure which satisfies the conclusion of
Proposition~\ref{ch32:prop7.1} is called \emph{ubiquitous} (or
sometimes \emph{ubiquitous in category}, if we want to distinguish
measure-theoretic or other forms of ubiquity). Thus the random graph
is ubiquitous in both measure and category. See Bankston and
Ruitenberg \cite{ch32:bib1} for further discussion.

\section{The automorphism group}
\label{ch32:sec2.8}

\subsection{General properties}

From the homogeneity of $R$ (Proposition~\ref{ch32:prop5.1}), we
see that it has a large and rich group of automorphisms: the
automorphism group $G = \Aut(R)$ acts transitively on the
vertices, edges, non-edges, etc. --- indeed, on finite
configurations of any given isomorphism type. In the language of
permutation groups, it is a rank $3$ permutation group on the vertex
set, since it has three orbits on ordered pairs of vertices, viz.,
equal, adjacent and non-adjacent pairs. Much more is known about
$G$; this section will be the longest so far.

First, the cardinality: 

\begin{proposition}\label{ch32:prop8.1} 
$|\Aut(R)| = 2^{\aleph_0}$.
\end{proposition}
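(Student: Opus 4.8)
The plan is to establish the two inequalities $|\Aut(R)| \le 2^{\aleph_0}$ and $|\Aut(R)| \ge 2^{\aleph_0}$ separately; the first is immediate, and the real content lies in the second. For the upper bound, every automorphism is in particular a permutation of the countably infinite vertex set, and there are exactly $\aleph_0^{\aleph_0} = 2^{\aleph_0}$ such permutations. Since $\Aut(R)$ is a subset of $\Sym(\mathbb{N})$, it follows at once that $|\Aut(R)| \le 2^{\aleph_0}$.

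For the lower bound I would exhibit an injection from the set $2^{\mathbb{N}}$ of binary sequences into $\Aut(R)$. The idea is to run the back-and-forth construction of Fact~\ref{ch32:subsec2.1.2} with both graphs equal to $R$, but to record a free binary choice at every forth step. Fix an enumeration $\{x_1,x_2,\dots\}$ of the vertices. For each $s \in 2^{\mathbb{N}}$ I build a chain of finite partial isomorphisms $f_0^s \subseteq f_1^s \subseteq \cdots$ as follows. At an even stage (a forth step) let $x_m$ be the least-indexed vertex not yet in the domain. By Proposition~\ref{ch32:prop3.1} the set of vertices correctly joined to the existing partial map is infinite; deleting the finitely many vertices already used as images still leaves an infinite set, which therefore contains at least two distinct admissible vertices $z^0$ and $z^1$, fixed by some canonical rule (say the two of least index). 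I then set $f_{n+1}^s(x_m) = z^{s_n}$, where $s_n$ is the relevant bit of $s$. At an odd stage (a back step) I proceed exactly as in the homogeneity proof, taking the least-indexed admissible preimage of the least-indexed vertex not yet in the range; here no binary choice is made. Setting $f^s = \bigcup_n f_n^s$ yields, by the usual back-and-forth argument, an adjacency-preserving bijection of $R$, that is, an automorphism.

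It remains to check that $s \mapsto f^s$ is injective, which gives $|\Aut(R)| \ge |2^{\mathbb{N}}| = 2^{\aleph_0}$ and completes the argument. If $s \ne s'$, let $n$ be the first index at which they differ. The two constructions make identical decisions through stage $n$, since the back steps and the earlier forth steps are governed by the same canonical rules, so the partial maps agree up to that point; at the forth step controlled by bit $n$ they assign the two distinct values $z^0 \ne z^1$ to the same vertex $x_m$. Hence $f^s(x_m) \ne f^{s'}(x_m)$, and the two automorphisms are distinct.

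The only real obstacle is verifying that inserting an arbitrary binary choice at every forth step never obstructs completion of the map to an automorphism. This is precisely where Proposition~\ref{ch32:prop3.1} does the work: because each relevant set of correctly-joined vertices is infinite (indeed induces a copy of $R$), the finitely many exclusions accumulated so far can always be met with room to spare. Thus both the forth steps, which must offer two admissible choices, and the back steps, which guarantee surjectivity, succeed at every stage no matter which bits are chosen, and the construction goes through for all $s \in 2^{\mathbb{N}}$ simultaneously.
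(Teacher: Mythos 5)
Your argument is correct, and it takes a different route from the paper. The paper does not give a self-contained proof at all: it simply invokes the general dichotomy that the automorphism group of a countable first-order structure is either at most countable or of size $2^{\aleph_0}$, according as some finite-tuple stabilizer is or is not trivial (and it later remarks that Proposition~\ref{ch32:prop8.2}, the existence of $2^{\aleph_0}$ non-conjugate cyclic automorphisms, gives a second proof). What you have done is, in effect, instantiate the proof of that general dichotomy for $R$: the branching back-and-forth construction, with two admissible images available at every forth step by Proposition~\ref{ch32:prop3.1}, builds a perfect binary tree of partial isomorphisms and hence $2^{\aleph_0}$ distinct automorphisms. This buys self-containedness at the cost of a page of bookkeeping; the paper's citation buys brevity and generality. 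Your key verifications are all in place: you correctly exclude the finitely many vertices already used as images (needed because a candidate $z$ non-adjacent to everything in the current range is not automatically distinct from those images), and your injectivity argument via the first differing bit is sound because back steps and the canonical choice of the two least-indexed candidates are deterministic given the current partial map. One purely cosmetic point: as written you consume bit $s_n$ at stage $n$ but only forth (even) stages read a bit, so odd-indexed bits are ignored and $s\mapsto f^s$ is not literally injective on all of $2^{\mathbb{N}}$; either index the bits by forth steps or restrict to sequences supported on the even coordinates --- the image still has cardinality $2^{\aleph_0}$ either way, so the conclusion is unaffected.
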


This is a special case of a more general fact. The automorphism
group of any countable first-order structure is either at most
countable or of cardinality $2^{\aleph_0}$, the first alternative
holding if and only if the stabilizer of some finite tuple of points
is the identity.

The normal subgroup structure was settled by Truss \cite{ch32:bib51}:

\begin{theorem}\label{ch32:them8.1} 
$\Aut(R)$ is simple.
\end{theorem}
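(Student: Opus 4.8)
The plan is to prove that $G = \Aut(R)$ is simple by showing that the normal closure of any nontrivial element $g \neq 1$ is all of $G$. I would first fix a normal subgroup $N \neq 1$ and choose $g \in N$ with $g \neq 1$, so there is a vertex $x$ with $xg \neq x$. The strategy is to show that $N$ contains enough automorphisms to generate everything, exploiting the extreme homogeneity and indestructibility of $R$ established earlier (Propositions \ref{ch32:prop3.1} and \ref{ch32:prop5.1}). Because $G$ is transitive on vertices, edges, and non-edges, and $G_A$ has infinite orbits on $R \setminus A$ for every finite set $A$ (the strong amalgamation property, Proposition \ref{ch32:prop5.2}), the group is highly transitive in a structured sense; this gives a lot of room to manipulate elements of $N$ by conjugation while staying inside $N$.

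The key steps I would carry out are as follows. First, I would establish a \emph{support-moving} lemma: given that some $g \in N$ moves a vertex $x$ to a non-adjacent (or adjacent) vertex, I can use Proposition \ref{ch32:prop3.1} to find a copy of $R$ sitting inside a neighbourhood set, and then conjugate $g$ by suitable automorphisms to produce commutators $[g, h] = g^{-1} h^{-1} g h \in N$ whose action is concentrated on a controllable region. Second, I would aim to show that $N$ contains an automorphism fixing a large induced copy of $R$ pointwise while acting as any prescribed automorphism on a complementary copy of $R$; this is feasible because $R$ can be split, via Proposition \ref{ch32:prop3.3} (the pigeonhole property), into pieces each isomorphic to $R$, and the stabiliser of one piece contains the full automorphism group of the complementary piece. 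Third, using homogeneity to transport any finite partial isomorphism into the region controlled by $N$, I would conclude that $N$ contains automorphisms realising arbitrary finite rearrangements, and then a back-and-forth argument (as in the proof of Fact \ref{ch32:subsec2.1.2}) assembles these into an arbitrary element of $G$, forcing $N = G$.

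The main obstacle I anticipate is the \emph{commutator/support-control step}: turning the bare hypothesis ``$g$ moves some vertex'' into the much stronger conclusion ``$N$ contains an element acting nontrivially but with controlled, essentially arbitrary behaviour on a sub-copy of $R$.'' The difficulty is that $g$ could a priori move every vertex in a complicated, globally entangled way, so I cannot simply localise its support. The way through, I expect, is to use the rank $3$ structure: since $x$ and $xg$ are either adjacent or non-adjacent, I can pin down a definable relationship and build, via Proposition \ref{ch32:prop3.1}, an infinite set $Z$ on which I have full freedom, then conjugate $g$ by elements supported on $Z$ to produce a nontrivial element of $N$ whose ``net effect'' can be read off on $Z$. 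Verifying that such commutators are genuinely nontrivial and that their behaviour can be steered to cover all of $G$ is the delicate combinatorial heart of the argument, and I would expect this to require careful bookkeeping about which vertices are fixed at each stage rather than any single clever trick.
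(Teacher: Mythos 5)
The paper offers no proof of this theorem: it cites Truss, whose argument establishes the much stronger fact that for any two non-identity $g,h\in\Aut(R)$, $h$ is a product of five conjugates of $g$ or $g^{-1}$, via a detailed analysis of the possible cycle structures of automorphisms of $R$. Your proposal is a plan rather than a proof, and the step you yourself flag as ``the delicate combinatorial heart'' --- passing from ``some $g\in N$ moves a vertex'' to ``$N$ contains elements with controlled, essentially arbitrary behaviour'' --- is the entire content of the theorem. Nothing in your sketch carries it out, so as it stands the proposal restates the problem rather than solving it.

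Worse, the specific mechanisms you propose cannot work. (i) You want to conjugate $g$ by automorphisms ``supported on $Z$'', where $Z$ is a witness set from Proposition~\ref{ch32:prop3.1}; but no non-identity automorphism of $R$ has support contained in such a $Z$. Indeed, an $h$ fixing $V(R)\setminus Z$ pointwise must send each $z\in Z$ to a vertex with the same neighbourhood in $V(R)\setminus Z$; for distinct $z,z'\in Z=Z(U,V)$, property $(\ast)$ gives a vertex adjacent to $z$ and to some $v_j\in V$ but not to $z'$, and this vertex lies outside $Z$ and separates $z$ from $z'$, forcing $h$ to fix $Z$ pointwise too. The same argument shows $\Aut(R)$ has \emph{no} nontrivial elements of finite support, so the ``automorphisms realising arbitrary finite rearrangements'' in your third step do not exist. (ii) Your claim that, after partitioning $V(R)$ into pieces each inducing $R$, ``the stabiliser of one piece contains the full automorphism group of the complementary piece'' is unjustified and in general false: extending an automorphism of the induced subgraph on $X$ by the identity on $Y$ must also preserve every cross-adjacency between $X$ and $Y$, which a typical automorphism of the graph on $X$ does not. (iii) Even granting the earlier steps, assembling an arbitrary element of $G$ by back-and-forth from partial data realised in $N$ would only place that element in the topological closure of $N$, and a normal subgroup need not be closed; so the final step does not yield $N=G$. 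To repair the argument you would need an entirely different engine --- either Truss's cycle-type analysis or the Macpherson--Tent method cited in the paper.
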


Truss proved a stronger result: if $g$ and $h$ are two non-identity
elements of $\Aut(R)$, then $h$ can be expressed as a
product of five conjugates of $g$ or $g^{-1}$. (This clearly implies
simplicity.) Recently Macpherson and Tent \cite{ch32:new11} gave a
different proof of simplicity which applies in more general situations. 

Truss also described the cycle structures of all elements of 
$\Aut(R)$.

A countable structure $M$ is said to have the \emph{small index
property} if any subgroup of $\Aut(M)$ with index less than
$2^{\aleph_0}$ contains the pointwise stabilizer of a finite set of
points of $M$i; it has the \emph{strong small index property} if any
such subgroup lies between the pointwise and setwise stabilizer of a finite
set. Hodges {\it et al.} \cite{ch32:bib29} and Cameron \cite{ch32:new4}
showed:

\begin{theorem}\label{ch32:them8.2} 
$R$ has the strong small index property.
\end{theorem}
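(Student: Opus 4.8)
The plan is to treat $G=\Aut(R)$ as a topological group under the topology of pointwise convergence, in which a neighbourhood base of the identity is given by the pointwise stabilizers $G_{(A)}$ of finite vertex sets $A$; this makes $G$ a Polish group in which each $G_{(A)}$ is an open subgroup of countable index. In these terms the strong small index property splits into two tasks: first, to show that any subgroup $H$ with $[G:H]<2^{\aleph_0}$ is \emph{open}, i.e.\ contains some $G_{(A)}$; and second, to squeeze such an $H$ between $G_{(A)}$ and the setwise stabilizer $G_{\{A\}}$.

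The engine for the first task is the existence of \emph{ample generic automorphisms}. I would consider, for each $n$, the class of finite graphs carrying an $n$-tuple of partial automorphisms (finite partial isomorphisms), and verify that these classes have the joint embedding and (cofinal/weak) amalgamation properties. The crucial input is Hrushovski's extension theorem: every finite graph $A$ embeds in a finite graph $B$ in which every partial automorphism of $A$ extends to a full automorphism of $B$. Feeding this into a Fra\"{i}ss\'{e}-style Baire-category argument inside $G$ yields, for every $n$, a comeager orbit of $G$ acting on $G^n$ by diagonal conjugation, i.e.\ $G$ has ample generic automorphisms. This step — ample generics, resting on Hrushovski's finite combinatorial theorem — is the main obstacle; everything else is comparatively formal.

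Granting ample generics, the openness of $H$ follows from the general implication ``ample generics $\Rightarrow$ small index property'' of Hodges et al.\ (later isolated by Kechris and Rosendal). The mechanism, and the point where the index bound $[G:H]<2^{\aleph_0}$ is used, is that the comeager diagonal conjugacy classes in the $G^n$ force a subgroup of index below $2^{\aleph_0}$ to be non-meager; a non-meager subgroup of a Polish group is then shown to satisfy $1\in\mathrm{int}(HH^{-1})=\mathrm{int}(H)$ by a Pettis-type argument, so $H$ is open and $G_{(A)}\le H$ for some finite $A$. I would import this as a black box rather than reprove it, since it is the standard SIP machinery.

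For the strong conclusion, choose $A$ of minimal cardinality with $G_{(A)}\le H$. First I would record that $N_G(G_{(A)})=G_{\{A\}}$: by strong amalgamation (Proposition~\ref{ch32:prop5.2}) every vertex outside $A$ is moved by some element of $G_{(A)}$, so the fixed-point set of $G_{(A)}$ is exactly $A$, whence any normalizing element permutes $A$. Now take $h\in H$; then $hG_{(A)}h^{-1}=G_{(hA)}\le H$, so $H\supseteq\langle G_{(A)},G_{(hA)}\rangle$. Invoking the lemma $\langle G_{(B)},G_{(C)}\rangle\supseteq G_{(B\cap C)}$ — provable by back-and-forth from the primitivity and strong-amalgamation structure of $R$ — gives $G_{(A\cap hA)}\le H$. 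If $hA\ne A$ then $|A\cap hA|<|A|$, contradicting the minimality of $|A|$; hence $hA=A$ for every $h\in H$, that is, $H\le G_{\{A\}}$. Combined with $G_{(A)}\le H$ this yields $G_{(A)}\le H\le G_{\{A\}}$, which is exactly the strong small index property.
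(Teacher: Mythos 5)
The paper does not actually prove this theorem; it attributes the small index property to Hodges, Hodkinson, Lascar and Shelah (via generic $n$-tuples of automorphisms, resting on Hrushovski's extension theorem) and the strengthening to Cameron's 2005 note. Your outline reproduces exactly that two-stage route, and the architecture is sound: ample generics give that every subgroup of index below $2^{\aleph_0}$ is open, and the minimality argument on $A$ (using $hG_{(A)}h^{-1}=G_{(hA)}$ and a generation lemma for pointwise stabilizers) correctly reduces the strong conclusion to $hA=A$ for all $h\in H$.

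The one place where your proposal has a genuine gap is the lemma $\langle G_{(B)},G_{(C)}\rangle\supseteq G_{(B\cap C)}$, which you dismiss as ``provable by back-and-forth from the primitivity and strong-amalgamation structure of $R$.'' That lemma is not formal: it is the entire mathematical content of the passage from the small index property to the strong small index property, it is specific to $R$ (it fails for general homogeneous structures --- for $\mathbb{Q}$ as an ordered set, for instance, two point stabilizers do not generate the whole group), and its proof requires a genuine construction of automorphisms of $R$ fixing $B\cap C$ as products of ones fixing $B$ and ones fixing $C$, typically by exploiting property $(\ast)$ to move points through auxiliary configurations. Primitivity handles only the base case $|B|=|C|=1$, $B\cap C=\emptyset$; the inductive step needs real work. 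A secondary inaccuracy: the derivation of the small index property from ample generics is not a Pettis-type argument applied to $HH^{-1}$ (Pettis needs $H$ to have the Baire property, which an arbitrary subgroup of small index need not have a priori); the actual argument constructs $2^{\aleph_0}$ cosets from a tree of generic tuples when $H$ is not open. Since you import that step as a black box this does not damage the proof, but the stated mechanism is not the right one.
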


The significance of this appears in the next subsection.
It is also related to the question of the reconstruction of a
structure from its automorphism group. For example,
Theorem~\ref{ch32:them8.2} has the following consequence:

\begin{corollary}\label{ch32:coro8.1}
Let $\Gamma$ be a graph with fewer than $2^{\aleph_0}$
vertices, on which $\Aut(R)$ acts transitively on vertices,
edges and non-edges. Then $\Gamma$ is isomorphic to $R$ (and the
isomorphism respects the action of $\Aut(R)$).
\end{corollary}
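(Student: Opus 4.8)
The plan is to use the strong small index property (Theorem~\ref{ch32:them8.2}) to pin down a vertex stabiliser of $G = \Aut(R)$ in its action on $\Gamma$, and then to transport the graph structure across an equivariant bijection of vertex sets.

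First I would fix a vertex $v_0$ of $\Gamma$. Since $G$ is transitive on $V(\Gamma)$, the index $[G : G_{v_0}]$ equals $|V(\Gamma)| < 2^{\aleph_0}$ by hypothesis, so $G_{v_0}$ has small index. By Theorem~\ref{ch32:them8.2} there is a finite set $F \subseteq V(R)$ with $G_{(F)} \le G_{v_0} \le G_{\{F\}}$, where $G_{(F)}$ and $G_{\{F\}}$ are the pointwise and setwise stabilisers of $F$.

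The heart of the argument --- and the step I expect to be the main obstacle --- is to show that $|F| = 1$. Here I would exploit the rank-$3$ hypothesis: the number of $G$-orbits on $V(\Gamma)^2$ (equivalently, the number of orbits of $G_{v_0}$ on $V(\Gamma)$) is exactly $3$. The inclusion $G_{v_0} \le G_{\{F\}}$ gives an equivariant surjection $V(\Gamma) = G/G_{v_0} \twoheadrightarrow G/G_{\{F\}}$, where by homogeneity $G/G_{\{F\}}$ is the set of induced copies of $R[F]$ in $R$; squaring and passing to orbit sets shows that the rank of $G$ on $V(\Gamma)$ is at least the rank of $G$ on $G/G_{\{F\}}$. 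I would then count the latter: two $k$-subsets inducing $R[F]$ are compared by their intersection and by the adjacencies between them, and for $|F| = k \ge 2$ this already yields at least four $G$-orbits --- coincident copies, and disjoint copies that are mutually all-adjacent, all-nonadjacent, or mixed, all of which exist in $R$ by Proposition~\ref{ch32:prop3.1} and universality. Hence the rank would be $\ge 4 > 3$, a contradiction; and $|F| = 0$ is excluded since it makes $\Gamma$ a single vertex, incompatible with transitivity on both edges and non-edges. So $|F| = 1$, and for a singleton $F = \{a\}$ we have $G_{(F)} = G_{\{F\}} = G_a$, whence $G_{v_0} = G_a$.

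It remains to produce the isomorphism. Since $G_{v_0} = G_a$ and $G$ is transitive on $V(R)$, the assignment $g\,v_0 \mapsto g\,a$ is a well-defined $G$-equivariant bijection $\phi : V(\Gamma) \to V(R)$. The edge set of $\Gamma$ is a $G$-invariant symmetric relation, so $\phi$ carries it to such a relation on $V(R)$; by the rank-$3$ structure of $R$ the only nontrivial possibilities are adjacency and non-adjacency. In the first case $\phi$ is an equivariant graph isomorphism $\Gamma \to R$; in the second, $\phi$ is an equivariant isomorphism $\Gamma \to \overline{R}$, and since $\overline{R} \cong R$ by Proposition~\ref{ch32:prop3.5} we again obtain $\Gamma \cong R$. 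Either way $\Gamma \cong R$ with the action of $\Aut(R)$ respected, as required.
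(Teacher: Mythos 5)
Your argument is correct and is exactly the deduction the paper has in mind: the text states the corollary as a consequence of the strong small index property (Theorem~\ref{ch32:them8.2}) without writing out a proof, and your chain --- small index of $G_{v_0}$, sandwiching between $G_{(F)}$ and $G_{\{F\}}$, the orbit-count on pairs of copies of $R[F]$ forcing $|F|=1$, and transport of structure along the resulting equivariant bijection --- supplies precisely the missing details. The one point worth a remark is the final case $\phi(\Gamma)=\overline{R}$: there the composite isomorphism $\Gamma\to R$ intertwines the actions only up to the automorphism of $\Aut(R)$ induced by conjugation with an anti-automorphism (a strictly pointwise-equivariant isomorphism to $R$ cannot exist there, since the centraliser of $\Aut(R)$ in $\Sym(V(R))$ is trivial), so the parenthetical ``respects the action'' must be read in that slightly weaker sense.
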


\subsection{Topology}

The symmetric group $\Sym(X)$ on an infinite set $X$ has a natural topology,
in which a neighbourhood basis of the identity is given by the pointwise
stabilizers of finite tuples. In the case where $X$ is countable, this
topology is derived from a complete metric, as follows. Take
$X=\mathbb{N}$.

Let $m(g)$ be the smallest point moved by the permutation $g$. Take the
distance between the identity and $g$ to be $\max\{2^{-m(g)},2^{-m(g^{-1})}\}$.
Finally, the metric is translation-invariant, so that $d(f,g)=d(fg^{-1},1)$.

\begin{proposition}
Let $G$ be a subgroup of the symmetric group on a countable set $X$. Then the
following are equivalent:
\begin{itemize}
\item[(a)] $G$ is closed in $\Sym(X)$;
\item[(b)] $G$ is the automorphism group of a first-order structure on $X$;
\item[(c)] $G$ is the automorphism group of a homogeneous relational structure
on $X$.
\end{itemize}
\end{proposition}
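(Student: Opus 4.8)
The plan is to prove the cycle of implications (a) $\Rightarrow$ (c) $\Rightarrow$ (b) $\Rightarrow$ (a), so that all the substance sits in the first step. The implication (c) $\Rightarrow$ (b) is immediate, since a homogeneous relational structure is in particular a first-order structure. For (b) $\Rightarrow$ (a), I would exploit the fact that the topology on $\Sym(X)$ is that of pointwise convergence, so every basic open set --- a coset of the pointwise stabiliser of a finite tuple --- is determined by the behaviour of a permutation at finitely many points. Given a first-order structure $M$ with $G = \Aut(M)$, the condition that a permutation $g$ respect a single instance of a relation, namely $R(a_1,\ldots,a_k) \Leftrightarrow R(ga_1,\ldots,ga_k)$, depends only on the values of $g$ at $a_1,\ldots,a_k$ and so defines a clopen subset of $\Sym(X)$; the analogous condition for a function or a constant is likewise clopen. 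Then $\Aut(M)$ is the intersection of all these clopen sets, and is therefore closed.

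The heart of the matter is (a) $\Rightarrow$ (c). Given a closed subgroup $G$, I would build its \emph{canonical structure} $M$: on the point set $X$, introduce for each $n$ and each orbit $O$ of $G$ on $X^n$ an $n$-ary relation symbol $R_O$, interpreted precisely as $O$. Since $X$ is countable, each $X^n$ is countable, so there are only countably many orbits and the language is countable. By construction every element of $G$ preserves each of its own orbits, giving $G \subseteq \Aut(M)$. Conversely, if $g \in \Aut(M)$ then for every finite tuple $\bar a = (a_1,\ldots,a_n)$ the tuples $\bar a$ and $g\bar a$ satisfy the same relation $R_O$, hence lie in the same $G$-orbit, so some $h \in G$ agrees with $g$ on $\bar a$. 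This says exactly that $g$ lies in the closure of $G$; since $G$ is closed, $g \in G$. Therefore $\Aut(M) = G$.

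It remains to check that $M$ is homogeneous, which is where the orbit construction pays off. A map $p \colon \bar a \mapsto \bar b$ between finite induced substructures is an isomorphism precisely when it preserves every relation; applying this to the $n$-ary relation that is the $G$-orbit of $\bar a$ forces $\bar b$ into that same orbit, so there is $h \in G = \Aut(M)$ with $h\bar a = \bar b$, extending $p$ to an automorphism. Hence $M$ is homogeneous, completing (a) $\Rightarrow$ (c). The main obstacle, and the only place closedness is genuinely used, is the inclusion $\Aut(M) \subseteq G$: the canonical structure can only recover $\overline{G}$, the group of permutations all of whose finite restrictions come from $G$, so the equivalence really does require the hypothesis that $G$ already equals its closure. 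I would therefore state the closure characterisation --- that $g \in \overline{G}$ if and only if every restriction of $g$ to a finite set extends to a member of $G$ --- explicitly, since both the easy direction (b) $\Rightarrow$ (a) and the computation $\Aut(M) = \overline{G}$ rest on it.
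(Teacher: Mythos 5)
Your proof is correct, and it is the canonical argument for this folklore result; the paper itself states the proposition without proof, so there is nothing to compare against. All three steps are sound: the observation that each instance of relation-preservation is a union of fibres of a finite restriction map (hence clopen) gives (b) $\Rightarrow$ (a); the canonical structure whose relations are the $G$-orbits on $X^n$ (countably many, since $X^n$ is countable) gives (a) $\Rightarrow$ (c); and you correctly isolate the one delicate point, namely that this structure can only recover $\overline{G} = \{g : \text{every finite restriction of } g \text{ extends to an element of } G\}$, which is exactly where the hypothesis that $G$ is closed is consumed.
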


So automorphism groups of homogeneous relational structures such as $R$ are
themselves topological groups whose topology is derived from a complete metric.

In particular, the Baire category theorem applies to groups like $\Aut(R)$.
So we can ask: is there a
``typical'' automorphism? Truss \cite{ch32:bib53} showed the
following result.

\begin{theorem}\label{ch32:them9.2}
There is a conjugacy class which is residual in
$\Aut(R)$. Its members have infinitely many cycles of each
finite length, and no infinite cycles.
\end{theorem}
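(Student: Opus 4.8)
The plan is to run the Baire category theorem in the complete metric space $\Aut(R)$ (whose completeness was noted in the Topology subsection) and to exhibit a single conjugacy class that is residual. Since conjugation acts continuously, a conjugacy class is a union of translates of any one of its members' orbits; so it suffices to produce a dense $G_\delta$ set $\mathcal{G}$ of automorphisms, all having the prescribed cycle type, and then to show that any two elements of $\mathcal{G}$ are conjugate. A conjugacy class containing the residual set $\mathcal{G}$ is itself residual, which gives the theorem. Throughout I think of $\Aut(R)$ as being coordinatised by \emph{finite partial automorphisms} — partial isomorphisms $p$ between finite induced subgraphs of $R$ — since a basic open set consists of all $g$ extending a given such $p$, and a residual property is one forced by a dense family of these conditions.

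First I would pin down the cycle structure by writing $\mathcal{G}$ as a countable intersection of dense open sets. For a fixed vertex $x$, the set $\{g : x \text{ lies on a finite cycle of } g\}$ is open, since $g^{\ell}(x)=x$ is witnessed by the action of $g$ on finitely many points; and it is dense, because any finite partial automorphism $p$ can be extended so that the $p$-orbit of $x$ closes into a cycle. To perform this closing step I would invoke the extension property for finite graphs: a finite partial automorphism of a finite graph extends to an honest automorphism $\sigma$ of some larger finite graph $B$. Embedding $B$ into $R$ (Proposition~\ref{ch32:prop4.1}) and extending $\sigma$ to all of $R$ by homogeneity (Proposition~\ref{ch32:prop5.1}) produces $g \in \Aut(R)$ extending $p$ under which $x$ sits on a finite cycle. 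Intersecting over the countably many $x$ yields a residual set on which there are no infinite cycles. Dually, for each $n$ and $k$ the set of $g$ possessing at least $k$ pairwise disjoint $n$-cycles is open and dense: given any condition $p$, one uses Proposition~\ref{ch32:prop3.1} to find $n$ fresh vertices mutually nonadjacent and nonadjacent to everything already used, and maps them cyclically, obtaining a fresh independent $n$-cycle that extends $p$. Intersecting over all $n,k$ forces infinitely many cycles of each finite length.

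Next I would establish that any two $g_1, g_2 \in \mathcal{G}$ are conjugate, by a back-and-forth construction of an isomorphism $\phi : R \to R$ with $\phi g_1 = g_2 \phi$. I build $\phi$ as the union of finite partial isomorphisms $\phi_t$ that intertwine $g_1$ and $g_2$ on their domains, alternately forcing each vertex of the first copy of $R$ into the domain and each vertex of the second copy into the range. At a typical step a new vertex $x$ arrives together with its finite $g_1$-orbit; I must locate in the second copy a vertex with an isomorphic $g_2$-orbit attached correctly to the already-matched points. This is possible precisely because both $g_1$ and $g_2$ realise, densely, all the finite orbit-configurations needed — a consequence of having been chosen in $\mathcal{G}$ — and because $R$ is homogeneous, so a partial isomorphism of finite substructures that respects the dynamics can always be completed. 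The union $\phi$ then conjugates $g_1$ to $g_2$, so $\mathcal{G}$ lies in one conjugacy class, and that class is residual.

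The hard part will be the amalgamation of finite partial automorphisms that drives both the cycle-closing step and the back-and-forth. Closing an arbitrary orbit segment into a cycle while preserving all adjacencies inside $R$ is exactly the statement that every finite partial automorphism of a finite graph extends to an automorphism of a larger finite graph (Hrushovski's theorem); this is the genuine combinatorial engine of the argument and is not a routine consequence of property ($\ast$) alone. Equally delicate is verifying that finite partial automorphisms amalgamate over a common core, so that the back-and-forth never stalls when a new orbit must be matched: one must check that two conditions with a shared restriction can be glued inside a single larger condition, which for graphs works via free amalgamation over the overlap but requires care to keep the partial map an isomorphism and to keep orbits finite. Establishing these two extension/amalgamation facts, and then confirming that they supply all the dense open sets used above, is where essentially all the work lies.
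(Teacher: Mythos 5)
Your overall architecture (Baire category in the complete metric on $\Aut(R)$, dense open conditions indexed by finite partial automorphisms, then a back-and-forth conjugation) matches the paper's, and your treatment of the cycle structure is fine: openness and density of ``the first $n$ points lie on finite cycles'', via the extension theorem for partial isomorphisms of finite graphs, is exactly how the paper disposes of its condition (a). The gap is in the conjugacy step. You define $\mathcal{G}$ purely by cycle type (no infinite cycles, infinitely many cycles of each finite length) and then assert that any two members of $\mathcal{G}$ ``realise, densely, all the finite orbit-configurations needed \ldots\ a consequence of having been chosen in $\mathcal{G}$.'' That is not a consequence of membership in $\mathcal{G}$ as you defined it. Cycle type does not determine the conjugacy class in $\Aut(R)$: a conjugating element must be a graph isomorphism intertwining the two permutations, so it must match the induced graph structure on and between the cycles. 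For instance, an automorphism all of whose $2$-cycles are non-edges, or one whose fixed-point set induces a clique rather than a copy of $R$, can still have infinitely many cycles of every finite length and no infinite cycles, yet cannot be conjugate to a generic automorphism. So $\mathcal{G}$, as defined, meets many conjugacy classes, and your back-and-forth stalls at the first configuration that $g_2$ fails to realise over the already-matched points.

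The missing ingredient is the paper's condition (b) and its strengthening: one must also impose, for each finite graph $\Gamma$ equipped with an isomorphism $f$ between induced subgraphs, the residual condition that $(\Gamma,f)$ embeds into $R$ so that $g$ extends $f$, and moreover --- this is what keeps the back-and-forth from stalling --- that whenever $(\Gamma,f)$ extends $(\Gamma_0,f_0)$, every embedding of $(\Gamma_0,f_0)$ compatible with $g$ extends to an embedding of $(\Gamma,f)$ compatible with $g$. Each such condition is residual: for a fixed embedding of $(\Gamma_0,f_0)$, the set of $g$ that either fail to extend $f_0$ there or admit the required extension is open and dense, and one intersects over the countably many embeddings and the countably many pairs. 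Only after adding these conditions to $\mathcal{G}$ does your intertwining back-and-forth go through; at that point the ``infinitely many cycles of each finite length'' clause falls out as a corollary of the realization conditions rather than needing to be imposed separately. Your final paragraph correctly identifies amalgamation of conditions as the combinatorial engine, but flagging it as ``delicate'' does not repair the definition of $\mathcal{G}$: the homogeneity-style extension conditions have to be built into the residual set itself.
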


Members of the residual conjugacy class (which is, of course,
unique) are called \emph{generic automorphisms} of $R$. I outline
the argument. Each of the following sets of automorphisms is
residual:
\begin{itemize}
\item[(a)] those with no infinite cycles;
\item[(b)] those automorphisms $g$ with the property that, 
if $\Gamma$ is any finite graph and $f$
any isomorphism between subgraphs of $\Gamma$, then there is an
embedding of $\Gamma$ into $R$ in such a way that $g$ extends $f$.
\end{itemize}
(Here (a) holds because the set of automorphisms for which the first
$n$ points lie in finite cycles is open and dense.) In fact, (b) can
be strengthened; we can require that, if the pair $(\Gamma, f)$
extends the pair $(\Gamma_0, f_0)$ (in the obvious sense), then any
embedding of $\Gamma_0$ into $R$ such that $g$ extends $f_0$ can be
extended to an embedding of $\Gamma$ such that $g$ extends $f$. Then
a residual set of automorphisms satisfy both (a) and the
strengthened (b); this is the required conjugacy class.

Another way of expressing this result is to consider the class
$\mathcal{C}$ of finite structures each of which is a graph $\Gamma$
with an isomorphism $f$ between two induced subgraphs (regarded as a
binary relation). This class satisfies Fra\"{i}ss\'{e}'s hypotheses,
and so has a Fra\"{i}ss\'{e} limit $M$. It is not hard to show that,
as a graph, $M$ is the random graph $R$; arguing as above, the map
$f$ can be shown to be a (generic) automorphism of $R$.

More generally, Hodges \textit{et al.} \cite{ch32:bib29} showed that
there exist ``generic $n$-tuples'' of automorphisms of $R$, and used
this to prove the small index property for $R$; see also Hrushovski
\cite{ch32:bib30}. The group generated by a generic $n$-tuple of
automorphisms is, not surprisingly, a free group; all its orbits are
finite. In the next subsection, we turn to some very different subgroups.

To conclude this section, we revisit the strong small index property. Recall
that a neighbourhood basis for the identity consists of the pointwise 
stabilisers of finite sets. If the strong small index property holds, then
every subgroup of small index (less than $2^{\aleph_0}$) contains one of
these, and so is open. So we can take the subgroups of small index as a
neighbourhood basis of the identity. So we have the following reconstruction
result:

\begin{proposition}
\label{ch32:reconst}
If $M$ is a countable structure with the strong small index property (for
example, $R$), then the structure of $\Aut(M)$ as topological group is
determined by its abstract group structure.
\end{proposition}

\subsection{Subgroups}

Another field of study concerns small subgroups. To introduce this,
we reinterpret the last construction of $R$ in
Section~\ref{ch32:sec2.2}. Recall that we took a universal set $S
\subseteq \mathbb{N}$, and showed that the graph $\Gamma(S)$ with
vertex set $\mathbb{Z}$, in which $x$ and $y$ are adjacent whenever
$|x- y|\in S$, is isomorphic to $R$. Now this graph admits the
``shift'' automorphism $x \mapsto  x + 1$, which permutes the
vertices in a single cycle. Conversely, let $g$ be a cyclic
automorphism of $R$. We can index the vertices of $R$ by integers so
that $g$ is the map $x \mapsto  x + 1$. Then, if $S = \{n \in
\mathbb{N}: n \sim 0\}$, we see that $x \sim y$ if and only if $|x-
y|\in S$, and that $S$ is universal. A short calculation shows that
two cyclic automorphisms are conjugate in $\Aut(R)$ if and only
if they give rise to the same set $S$. Since there are
$2^{\aleph_0}$ universal sets, we conclude:

\begin{proposition}\label{ch32:prop8.2} 
$R$ has $2^{\aleph_0}$ non-conjugate cyclic automorphisms.
\end{proposition}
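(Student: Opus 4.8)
The plan is to establish a bijection between the conjugacy classes of cyclic automorphisms of $R$ and the universal subsets of $\mathbb{N}$, and then to count the latter. The preceding paragraph already supplies both halves of the correspondence: every cyclic automorphism $g$, after indexing the vertices by $\mathbb{Z}$ so that $g$ becomes the shift $x \mapsto x+1$, determines the set $S = \{n \in \mathbb{N} : n \sim 0\}$, which is universal; and conversely every universal set $S$ arises this way, since $\Gamma(S)$ is isomorphic to $R$ and carries the shift as a cyclic automorphism. First I would check that $S$ is independent of the choice of origin in the indexing: because $g$ is an automorphism, adjacency is invariant under $x \mapsto x+1$, so $\{n : (k+n) \sim k\} = \{n : n \sim 0\}$ for every $k$. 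Thus $S$ is a genuine invariant of $g$.

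The heart of the argument is the claim that two cyclic automorphisms are conjugate in $\Aut(R)$ if and only if they yield the same set $S$ --- this is the ``short calculation'' asserted above, and I expect it to be the one step requiring real care. For the easy direction, if $g_1$ and $g_2$ both produce $S$, then in their respective $\mathbb{Z}$-indexings the adjacency rule is $m \sim n \iff |m-n| \in S$ in both cases; the map $h$ identifying the two indexings (sending $g_1^n(0_1)$ to $g_2^n(0_2)$) therefore preserves adjacency, so $h \in \Aut(R)$, and $hg_1h^{-1} = g_2$ by construction. For the converse, suppose $hg_1h^{-1} = g_2$ with $h \in \Aut(R)$. Indexing by $g_1$ and writing $a = h^{-1}(0)$, one computes $g_2^n(0) = h(a+n)$, and $0 = h(a)$; hence the set produced by $g_2$ is $\{n : h(a+n) \sim h(a)\}$, which equals $\{n : (a+n) \sim a\} = S$ since $h$ is an automorphism and adjacency is shift-invariant. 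So conjugacy is exactly equality of $S$, and the correspondence descends to a bijection on conjugacy classes.

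It remains to count the universal subsets of $\mathbb{N}$. Trivially there are at most $2^{\aleph_0}$ of them. For the lower bound I would invoke the observations of Section~\ref{ch32:sec2.7}: the universal binary sequences form a residual (indeed measure-$1$) subset of $2^{\mathbb{N}}$, and a residual set in that space was noted there to have cardinality $2^{\aleph_0}$. Hence there are exactly $2^{\aleph_0}$ universal sets, and therefore $2^{\aleph_0}$ pairwise non-conjugate cyclic automorphisms of $R$, as claimed.
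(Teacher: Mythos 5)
Your argument is correct and is essentially the paper's own: the bijection between conjugacy classes of cyclic automorphisms and universal subsets of $\mathbb{N}$, followed by the count of universal sets. You have simply written out the ``short calculation'' (conjugacy if and only if equality of $S$) that the paper omits, and your justification that there are $2^{\aleph_0}$ universal sets via the residual-set remark of Section~\ref{ch32:sec2.7} matches the paper's intent.
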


(Note that this gives another proof of Proposition~\ref{ch32:prop8.1}.)

Almost all subsets of $\mathbb{N}$ are universal --- this is true in
either sense discussed in Section~\ref{ch32:sec2.7}. The
construction preceding Proposition~\ref{ch32:prop8.2} shows that
graphs admitting a given cyclic automorphism correspond to subsets
of $\mathbb{N}$; so almost all ``cyclic graphs'' are isomorphic to
$R$. What if the cyclic permutation is replaced by an arbitrary
permutation or permutation group? The general answer is unknown:

\begin{conjecture}\label{ch32:conj8.1} 
Given a permutation group $G$ on a countable set, the following are equivalent:
\begin{itemize}
\item[(a)] some $G$-invariant graph is isomorphic to $R$;
\item[(b)] a random $G$-invariant graph is isomorphic to $R$ with positive probability.
\end{itemize}
\end{conjecture}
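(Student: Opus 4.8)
The plan is to make the probabilistic model explicit and then split off the trivial implication. Fix the countable vertex set and let $\Omega$ be the set of orbits of $G$ on the $2$-element subsets of vertices. Since $G$ preserves a graph exactly when its edge set is a union of such orbits, a $G$-invariant graph is precisely a function $\omega\colon\Omega\to\{0,1\}$ (with $\omega(O)=1$ meaning the pairs in $O$ are edges), and the ``random $G$-invariant graph'' is the point of $2^{\Omega}$ obtained by tossing one independent fair coin per orbit. This is at once a product probability space and a complete metric space, so both the measure-theoretic and the Baire-category machinery of Section~\ref{ch32:sec2.7} apply. In this language (b)$\Rightarrow$(a) is immediate: an event of positive probability is non-empty, so if a random $G$-invariant graph is $R$ with positive probability then some $G$-invariant graph is $R$. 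All the content is in (a)$\Rightarrow$(b).

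For that direction I would argue through property ($\ast$), which by Fact~\ref{ch32:subsec2.1.2} characterizes $R$. For each pair of disjoint finite vertex sets $U=\{u_1,\dots,u_m\}$ and $V=\{v_1,\dots,v_n\}$ let $A_{U,V}$ be the event that some vertex $z$ is correctly joined; then the random $G$-invariant graph is $R$ exactly on $\bigcap_{U,V}A_{U,V}$, and the goal is to show this set has positive measure. Two facts are immediate from hypothesis (a). Each $A_{U,V}$ is open, since a witness is certified by finitely many orbit-values. And each $A_{U,V}$ has positive probability: the invariant copy $\Gamma^{*}\cong R$ furnished by (a) contains a witness $z^{*}$, and because $\Gamma^{*}$ is $G$-invariant the edge-pairs $\{z^{*},u_i\}$ and the non-edge-pairs $\{z^{*},v_j\}$ must lie in distinct orbits, so fixing those finitely many orbits to agree with $\Gamma^{*}$ cuts out a nonempty cylinder of positive measure on which $z^{*}$ is already a correct witness. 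What is not automatic --- and already signals the difficulty --- is whether $A_{U,V}$ is dense: a single orbit may contain pairs $\{z,u_i\}$ for infinitely many candidate $z$, so fixing finitely many orbit-values adversely can annihilate all witnesses at once. Even granting density, the Baire-category conclusion would only re-prove that $\bigcap_{U,V}A_{U,V}$ is non-empty, i.e. re-prove (a), and would say nothing about its measure.

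The crux is to pass from ``each $A_{U,V}$ has positive probability'' to ``the intersection has positive probability,'' and the controlling data is how the orbits incident with $U\cup V$ are distributed as $z$ varies. If for a given $(U,V)$ there are infinitely many potential witnesses $z$ whose orbit-tuples $(\{z,u_1\},\dots,\{z,v_n\})$ are pairwise disjoint, then the correctly-joined events for those $z$ depend on disjoint coordinates, are independent, each of probability at least $2^{-(m+n)}$, and one recovers $P(A_{U,V})=1$ exactly as in Fact~\ref{ch32:subsec2.1.1}. Thus only instances exhibiting heavy orbit-sharing or finite orbits can fail with positive probability, and the role of (a) must be to bound how badly this happens simultaneously over all $(U,V)$. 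The natural attack is an infinite Lov\'{a}sz-local-lemma or decoupling argument: extract from the single global solution $\Gamma^{*}$ a uniform supply of near-independent witnesses, grouped by the finite configuration $U\cup V$ and exploiting the homogeneity of the $G$-action, so as to bound $P\big(\bigcup_{U,V}\overline{A_{U,V}}\big)$ away from $1$.

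The main obstacle --- and the reason the statement is still a conjecture --- is precisely this combination step. There is no general principle guaranteeing that a countable intersection of positive-probability (even residual) events has positive measure; residual sets can be null. Because a single coordinate $\omega(O)$ may govern infinitely many adjacencies when the orbit $O$ is infinite, the events $A_{U,V}$ can be correlated in delicate long-range ways, and a priori the dependencies could drive the measure of $\bigcap_{U,V}A_{U,V}$ to $0$ while one lone invariant copy of $R$ survives. Converting the qualitative ``a good $\omega$ exists'' into the quantitative ``good $\omega$'s are non-negligible'' is the heart of the problem, and I would expect any successful proof to rest on a clever quantitative form of the indestructibility phenomenon of Section~\ref{ch32:sec2.3}.
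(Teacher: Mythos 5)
This statement is labelled a \emph{conjecture} in the paper, and the paper offers no proof of it --- indeed it explicitly says ``the general answer is unknown,'' noting only that the special case of a group generated by a single permutation can be handled, and that ``positive probability'' cannot be strengthened to ``probability $1$.'' So there is no proof of the author's to compare yours against, and your proposal, to its credit, does not pretend to close the question: you prove only the trivial implication and then give a diagnosis of why the other one is hard. What you do establish is correct. The identification of $G$-invariant graphs with points of the product space $2^{\Omega}$ is the right model (and matches the paper's description of the random $G$-invariant graph); (b)$\Rightarrow$(a) is indeed immediate; each $A_{U,V}$ is open, being a union of cylinders determined by the finitely many orbits meeting $\{z\}\times(U\cup V)$; and your argument that each $A_{U,V}$ has positive probability under hypothesis (a) is sound, the key observation being that $G$-invariance of $\Gamma^{*}$ forces the edge-orbits and non-edge-orbits at the witness $z^{*}$ to be distinct, so the conditioning is consistent and costs only a factor of at most $2^{-(m+n)}$.

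The genuine gap is exactly where you locate it: nothing in your argument (or in any known argument) passes from ``each $A_{U,V}$ has positive probability'' to ``$\bigcap_{U,V}A_{U,V}$ has positive probability,'' and your own counter-remarks --- that a single infinite orbit can control infinitely many adjacencies at once, that residual sets can be null, and that density of the $A_{U,V}$ would at best re-derive (a) --- are precisely the obstructions that keep this a conjecture. The suggested routes (a Lov\'asz-local-lemma--style decoupling, or a quantitative form of indestructibility) are plausible directions but are not carried out, and carrying them out is the open problem. In short: your submission is a correct partial result plus an accurate problem analysis, not a proof, and it should be presented as such.
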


A random $G$-invariant graph is obtained by listing the orbits of
$G$ on the $2$-subsets of the vertex set, and deciding randomly
whether the pairs in each orbit are edges or not. We cannot replace
``positive probability'' by ``probability $1$'' here. For example,
consider a permutation with one fixed point $x$ and two infinite
cycles. With probability $\frac{1}{2}$, $x$ is joined to all or none
of the other vertices; if this occurs, the graph is not isomorphic
to $R$. However, almost all graphs for which this event does not
occur are isomorphic to $R$. It can be shown that the conjecture is
true for the group generated by a single permutation; and Truss'
list of cycle structures of automorphisms can be re-derived in this
way.

Another interesting class consists of the \emph{regular}
permutation groups. A group is \emph{regular} if it is transitive
and the stabilizer of a point is the identity. Such a group $G$ can
be considered to act on itself by right multiplication. Then any
$G$-invariant graph is a \emph{Cayley graph} for $G$; in other
words, there is a subset $S$ of $G$, closed under inverses and not
containing the identity, so that $x$ and $y$ are adjacent if and
only if $xy^{-1}\in S$. Now we can choose a \emph{random Cayley
graph} for $G$ by putting inverse pairs into $S$ with probability
$\frac{1}{2}$. It is not true that, for every countable group $G$, a
random Cayley graph for $G$ is almost surely isomorphic to $R$.
Necessary and sufficient conditions can be given; they are somewhat
untidy. I will state here a fairly general sufficient condition.

A \emph{square-root set} in $G$ is a set
\[
\sqrt{a} = \{x \in G : x^2=a\};
\]
it is \emph{principal} if $a = 1$, and \emph{non-principal}
otherwise.

\begin{proposition}\label{ch32:prop8.3} 
Suppose that the countable group $G$ cannot be
expressed as the union of finitely many translates of non-principal
square-root sets and a finite set. Then almost all Cayley graphs for
$G$ are isomorphic to $R$.
\end{proposition}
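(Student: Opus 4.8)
The plan is to verify property ($\ast$) almost surely, exactly along the lines of Fact~\ref{ch32:subsec2.1.1}, and then invoke Fact~\ref{ch32:subsec2.1.2}. A random Cayley graph is determined by independent fair coin tosses, one for each inverse pair $\{g,g^{-1}\}$, deciding whether $g\in S$. Fix distinct vertices $u_1,\ldots,u_m,v_1,\ldots,v_n$; it suffices to show that the set of graphs for which no vertex is correctly joined to this configuration is null, since there are only countably many configurations and a countable union of null sets is null. For a vertex $z$, being correctly joined means $z u_i^{-1}\in S$ for all $i$ and $z v_j^{-1}\notin S$ for all $j$, which depends only on the coin tosses for the inverse pairs of the $m+n$ \emph{test elements} $z a^{-1}$, where $a$ ranges over $\{u_1,\ldots,v_n\}$.

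The crux is to identify when these $m+n$ events are mutually independent fair coins. This happens exactly when $z\notin\{u_1,\ldots,v_n\}$ (so no test element is the identity) and no two test elements are mutual inverses (so the test elements occupy $m+n$ distinct inverse pairs; a lone involution among them merely contributes a singleton pair, still carrying probability $\tfrac12$). Call such a $z$ \emph{good}; for a good $z$ the probability of being correctly joined is exactly $2^{-(m+n)}$. The first key step is the computation that $z$ is \emph{not} good precisely when either $z\in\{u_1,\ldots,v_n\}$ or $z a^{-1}=b z^{-1}$ for some $a\neq b$ in the configuration; and the latter rearranges (right-multiply by $z$, then by $a^{-1}$) to $(za^{-1})^2=ba^{-1}$, i.e.\ $z\in\sqrt{ba^{-1}}\,a$. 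Since $a\neq b$ forces $ba^{-1}\neq 1$, each $\sqrt{ba^{-1}}$ is a \emph{non-principal} square-root set. Hence the set of non-good vertices lies in a finite set together with finitely many translates of non-principal square-root sets. This is exactly the configuration forbidden by the hypothesis: were the good vertices only finitely many, then $G$ itself would be expressible as a union of finitely many translates of non-principal square-root sets and a finite set, contrary to assumption. Therefore there are infinitely many good vertices.

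The remaining steps parallel Fact~\ref{ch32:subsec2.1.1}. I first thin the good vertices to an infinite family $z_1,z_2,\ldots$ whose test-element inverse pairs are pairwise disjoint: given $z_1,\ldots,z_k$, any $z$ sharing an inverse pair with some $z_i$ satisfies $z a^{-1}\in\{z_i b^{-1},\,b z_i^{-1}\}$, which pins $z$ down uniquely for each $i$ and each pair $(a,b)$, so at most $2k(m+n)^2$ vertices are excluded and a further good $z_{k+1}$ can always be chosen. Because the events ``$z_i$ correctly joined'' then depend on disjoint sets of coin tosses, they are mutually independent, each of probability $2^{-(m+n)}$; so the probability that none of $z_1,\ldots,z_N$ is correctly joined is $(1-2^{-(m+n)})^N\to 0$. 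Thus for this configuration the failure set is null, which completes the measure argument: almost every Cayley graph for $G$ satisfies ($\ast$) and so is isomorphic to $R$.

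The main obstacle is the identification in the second paragraph: recognising the dependence condition ``two test elements are mutual inverses'' as membership in a translate of a square-root set, and checking that non-principality is precisely what the hypothesis supplies. Everything downstream---independence via disjoint supports, the geometric decay estimate, and the countable union of null sets---is routine and mirrors the argument of Fact~\ref{ch32:subsec2.1.1}.
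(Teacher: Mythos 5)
Your proof is correct. Note that the paper itself does not prove Proposition~\ref{ch32:prop8.3} --- it only cites Cameron and Johnson \cite{ch32:bib9} --- so there is no in-text argument to compare against; but your argument is the natural one and, as far as I can tell, the one the hypothesis is designed for. You correctly identify the two points where the general-group setting differs from the plain Erd\H{o}s--R\'enyi computation: first, the coin tosses governing ``$z$ is correctly joined'' are attached to inverse pairs, and two test elements $za^{-1}$, $zb^{-1}$ collide exactly when $(za^{-1})^2=ba^{-1}$, i.e.\ $z\in\sqrt{ba^{-1}}\,a$ with $ba^{-1}\neq 1$, so the hypothesis delivers infinitely many ``good'' witnesses; second, independence \emph{across} different witnesses is not automatic, and your thinning step (excluding the at most $2k(m+n)^2$ vertices whose test pairs meet those of $z_1,\ldots,z_k$) is exactly what is needed to make the geometric bound $(1-2^{-(m+n)})^N$ legitimate. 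The observation that an involutory test element still carries a fair coin, and the closing countable-union-of-null-sets step, are both handled correctly.
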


This proposition is true in the sense of Baire category as well. In
the infinite cyclic group, a square-root set has cardinality at most
$1$; so the earlier result about cyclic automorphisms follows. See
Cameron and Johnson \cite{ch32:bib9} for further details.

\subsection{Overgroups}

There are a number of interesting overgroups of $\Aut(R)$ in the symmetric
group on the vertex set $X$ of $R$.

Pride of place goes to the \emph{reducts}, the overgroups which are closed
in the topology on $\Sym(X)$ (that is, which are automorphism groups of
relational structures which can be defined from $R$ without parameters).
These were classified by Simon Thomas \cite{ch32:bib50}.

An \emph{anti-automorphism}
of $R$ is an isomorphism from $R$ to its complement; a
\emph{switching automorphism} maps $R$ to a graph equivalent to
$R$ by switching. The concept of a \emph{switching
anti-automorphism} should be clear.

\begin{theorem}\label{ch32:them8.3} 
There are exactly five reducts of $R$, viz.: $A = \Aut(R)$; the
group $D$ of automorphisms and anti-automorphisms of $R$; the group
$S$ of switching automorphisms of $R$; the group $B$ of switching
automorphisms and anti-automorphisms of $R$; and the symmetric
group.
\end{theorem}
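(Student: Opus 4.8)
The five groups form a lattice: $A=\Aut(R)$ at the bottom; $D$ and $S$ immediately above it; $B$ above both; and the symmetric group $\Sym(X)$ at the top. The plan is to treat the two directions separately. For the easy direction I would exhibit each group as the automorphism group of a relational structure definable from $R$, so that by the characterization of closed subgroups of $\Sym(X)$ as automorphism groups of relational structures (stated earlier in this subsection) each is automatically a reduct. For $A$ this is the defining property of $R$. For $S$ the relevant structure is the associated \emph{two-graph}: the ternary relation $T(x,y,z)$ recording the parity of the number of edges among $\{x,y,z\}$. A direct count shows that switching by any set flips an \emph{even} number of the three pairs in each triple, so switching automorphisms preserve $T$; conversely one identifies $\Aut(T)=S$. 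For $D$ I would use the quaternary relation $I(x,y,z,w):\equiv(x\sim y\leftrightarrow z\sim w)$: automorphisms preserve adjacency and anti-automorphisms reverse it globally, so both preserve $I$, and $\Aut(I)=D$. (Anti-automorphisms exist at all because $R\cong\bar R$, by Proposition~\ref{ch32:prop3.5}.) The group $B$ is the automorphism group of the combined structure, recording four-point configurations up to switching and complementation. One then checks that each is genuinely a group and that the inclusions are strict: $A\subsetneq S,D$ because $S$ and $D$ are both transitive on unordered pairs while $A$ is not; $S\neq D$ because an anti-automorphism flips the parity of every triple and so violates $T$; and $B\neq\Sym(X)$ because $B$ still separates inequivalent four-point types.

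The substance is the converse: every reduct is one of these five. Since $R$ is $\aleph_0$-categorical and homogeneous, a reduct $G$ is completely determined by the partition of $n$-tuples into $\Aut(R)$-orbits that $G$ \emph{merges}, for every $n$ at once, in a way consistent across arities and closed under the Galois correspondence --- equivalently, by its invariant relations. I would organize the argument by arity, starting with pairs. The $G$-orbits on unordered pairs are unions of the two $\Aut(R)$-orbits (edges and non-edges). If $G$ keeps these two orbits distinct, then no element of $G$ can send an edge to a non-edge, so every element preserves adjacency and $G=A$. Hence any reduct properly containing $A$ is transitive on unordered pairs, i.e.\ contains a permutation turning some edge into a non-edge.

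Assuming $G$ is transitive on pairs, the next invariant to test is the two-graph $T$ on triples. The heart of the proof is to show that such a reduct must either preserve one of the structures $T$, $I$, or their combination --- landing it in $S$, $D$, or $B$ --- or else preserve no nontrivial relation at all, in which case $G=\Sym(X)$. For this I would take an element $g\in G$ witnessing the failure of all three invariants and use it, together with $\Aut(R)$, to realize arbitrary local behaviour; closing up then yields the full symmetric group. To control the \emph{a priori} wild action of $g$ on finite configurations, the key tool is structural Ramsey theory for finite ordered graphs: after passing to a suitable infinite subset, $g$ acts \emph{canonically}, and the canonical behaviours can be enumerated. Each canonical type is then matched against the list --- the adjacency-preserving and adjacency-reversing types give $A$ and the anti-automorphisms, the parity-preserving types give switching, and any remaining type is shown to generate all finite permutations.

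The main obstacle is precisely this canonization-and-classification step. An arbitrary element of a reduct can scramble adjacencies unpredictably, and the difficulty is to prove that, after restricting to a large homogeneous subset, only finitely many coherent patterns survive, and that every pattern lying outside the switching/complementation world collapses \emph{all} invariant relations. This is where Thomas's original argument requires a delicate induction on configuration size, and where the Ramsey-theoretic machinery does the real work; verifying that each non-canonical possibility forces $G=\Sym(X)$ --- rather than producing some hypothetical sixth reduct --- is the technical core of the theorem.
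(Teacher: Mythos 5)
The paper itself offers no proof of this theorem --- it is stated as a result of Simon Thomas with a citation --- so there is no in-text argument to measure you against; I can only judge the proposal on its own terms. Your easy direction is essentially right and complete in outline: the two-graph computation (a triple with $k$ vertices in the switching set has $k(3-k)$ crossing pairs, always even), the quaternary relation $I$ for $D$, and the strictness of the inclusions are all sound. (For $B\ne\Sym(X)$ one can also cite the parity of the edge count on five points, which is invariant under both switching and complementation; your count of $B$-orbits on $4$-sets works too, though you assert rather than verify it.) Combined with the characterization of closed groups as automorphism groups of relational structures from the Topology subsection, this shows the five listed groups are reducts and are pairwise distinct.

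The converse --- that there is no sixth reduct --- is the entire content of the theorem, and your proposal does not establish it. You correctly identify the shape of the known arguments: reduce to the lattice of merged $\Aut(R)$-orbits on $n$-tuples, observe that a reduct not merging edges with non-edges equals $A$, and then show that any element witnessing the failure of all three invariant structures can be canonized on a large subset using the Ramsey theorem for finite ordered graphs, after which the finitely many canonical behaviours are classified and the bad ones are shown to generate a dense subgroup of $\Sym(X)$. This is indeed how Thomas's proof (and the later canonical-functions treatments) proceed. But you then explicitly defer the two load-bearing steps: the enumeration of the canonical behaviours, and the verification that every behaviour outside the switching/complementation world collapses all invariant relations. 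Without these, nothing excludes a hypothetical further reduct --- for instance one that merges orbits only on tuples of length at least five. What you have is a correct, well-informed plan whose technical core is named but not executed, so it does not yet constitute a proof.
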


\begin{remark}\label{ch32:rema8.1}\rm
The set of all graphs on a given vertex set is
a $\mathbb{Z}_2$-vector space, where the sum of two graphs is
obtained by taking the symmetric difference of their edge sets. Now
complementation corresponds to adding the complete graph, and
switching to adding a complete bipartite graph. Thus, it follows
from Theorem~\ref{ch32:them8.3} that, if $G$ is a closed supergroup
of $\Aut(R)$, then the set of all images of $R$ under $G$ is
contained in a coset of a subspace $W(G)$ of this vector space. (For
example, $W(B)$ consists of all complete bipartite graphs and all
unions of at most two complete graphs.) Moreover, these subspaces
are invariant under the symmetric group. It is remarkable that the
combinatorial proof leads to this algebraic conclusion.
\end{remark}

Here is an application due to Cameron and Martins
\cite{ch32:bib10}, which draws together several threads from earlier
sections. Though it is a result about finite random graphs, the
graph $R$ is inextricably involved in the proof.

Let $\mathcal{F}$ be a finite collection of finite graphs. For any
graph $\Gamma$, let $\mathcal{F}(\Gamma)$ be the hypergraph whose
vertices are those of $\Gamma$, and whose edges are the subsets
which induce graphs in $\mathcal{F}$. To what extent does
$\mathcal{F}(\Gamma)$ determine $\Gamma$?

\begin{theorem}\label{ch32:them8.4} 
Given $\mathcal{F}$, one of the following
possibilities holds for almost all finite random graphs $\Gamma$:
\begin{itemize}
\item[(a)] $\mathcal{F}(\Gamma)$ determines $\Gamma$ uniquely;
\item[(b)] $\mathcal{F}(\Gamma)$ determines $\Gamma$ up to complementation;
\item[(c)] $\mathcal{F}(\Gamma)$ determines $\Gamma$ up to switching;
\item[(d)] $\mathcal{F}(\Gamma)$ determines $\Gamma$ up to switching and/or complementation;
\item[(e)] $\mathcal{F}(\Gamma)$ determines only the number of vertices of $\Gamma$.
\end{itemize}
\end{theorem}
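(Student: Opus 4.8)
The plan is to route the whole problem through the reduct classification of Theorem~\ref{ch32:them8.3}, using the random graph $R$ as an intermediary and passing to almost all finite $\Gamma$ by the zero--one law (Theorem~\ref{ch32:them6.1} and Corollary~\ref{ch32:coro6.1}). The organising observation is that the five possibilities (a)--(e) should match, one for one, the five reducts $A,D,S,B,\Sym(X)$ of $R$: (a) with $A=\Aut(R)$, (b) with the group $D$ of anti-automorphisms (complementation), (c) with the switching group $S$, (d) with $B$, and (e) with $\Sym(X)$.

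First I would attach a permutation group to $\mathcal F$. Regard $\mathcal F(R)$ as a relational structure on the vertex set (one relation per cardinality occurring in $\mathcal F$), definable from $R$ without parameters, and set $G=\Aut(\mathcal F(R))$. Every automorphism of $R$ preserves which subsets induce members of $\mathcal F$, so $G\supseteq\Aut(R)$; and $G$, being the automorphism group of a first-order structure, is closed. Hence $G$ is one of the five reducts of Theorem~\ref{ch32:them8.3}. Which one it is depends only on the closure behaviour of the finite family $\mathcal F$: since $R$ contains every finite graph as an induced subgraph in every possible way (Proposition~\ref{ch32:prop4.1} and the refinement Proposition~\ref{ch32:prop3.1}), an anti-automorphism lies in $G$ exactly when $\mathcal F$ is closed under complementation, and a switching automorphism lies in $G$ exactly when $\mathcal F$ is closed under switching. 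These are finite checks on $\mathcal F$ alone, and they pin down $G\in\{A,D,S,B,\Sym(X)\}$, hence the claimed case.

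Next I would translate ``$G$ is such-and-such a reduct'' into the reconstruction statement via Remark~\ref{ch32:rema8.1}. There the images of $R$ under $G$ form a coset of a subspace $W(G)$ of the $\mathbb Z_2$-space of graphs, with $W(A)=0$, $W(D)$ spanned by the complete graph, $W(S)$ by the complete bipartite graphs, $W(B)$ by both, and $W(\Sym(X))$ everything. The easy half of the theorem is then immediate, and holds for \emph{every} finite $\Gamma$: if $\Delta$ lies in the (evident finite analogue of the) space $W(G)$, then passing from $\Gamma$ to $\Gamma+\Delta$ is exactly a complementation and/or a switching, and the closure properties of $\mathcal F$ recorded by $G$ give $\mathcal F(\Gamma+\Delta)=\mathcal F(\Gamma)$. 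So $\mathcal F(\Gamma)$ certainly cannot distinguish $\Gamma$ from its $W(G)$-translates, which already yields cases (b)--(e) as \emph{at least} that much ambiguity.

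The hard half --- and the main obstacle --- is the converse: for almost all finite $\Gamma$ there are \emph{no accidental} coincidences, so that any $\Gamma'$ with $\mathcal F(\Gamma')\cong\mathcal F(\Gamma)$ arises from $\Gamma$ by a $W(G)$-operation together with a relabelling. Relabelling by the hypergraph isomorphism reduces this to: if $\mathcal F(\Gamma+\Delta)=\mathcal F(\Gamma)$ then $\Delta\in W(G)$. To obtain this for almost all $\Gamma$, I would isolate a first-order ``richness'' property $P_{\mathcal F}$ in the spirit of property ($\ast$), asserting that every bounded configuration of vertices has enough correctly-joined extensions to realise, on some subset $S$, each isomorphism type relevant to $\mathcal F$. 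Then $R$ satisfies $P_{\mathcal F}$, so by the zero--one law almost all finite $\Gamma$ do too. The combinatorial core is to show deterministically that a $P_{\mathcal F}$-rich $\Gamma$ has no $\mathcal F$-twin outside $W(G)$: if $\Delta\notin W(G)$ then $\Delta$ is not a global switching/complementation, so it carries a \emph{bounded} local witness --- a set of at most $\max_{H\in\mathcal F}|H|$ vertices on which the flip pattern is induced by no admissible operation --- and richness supplies a copy of that witness on which $\mathcal F$-membership is flipped, contradicting $\mathcal F(\Gamma+\Delta)=\mathcal F(\Gamma)$. Finally, since almost all finite graphs are asymmetric, the hypergraph automorphisms contribute nothing beyond the $W(G)$-operations and the trivial relabelling, so the five reconstruction types are precisely (a)--(e). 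I expect the delicate point to be the design of $P_{\mathcal F}$: one must ensure the witness for ``$\Delta\notin W(G)$'' is genuinely bounded and detectable by a first-order amount of richness, which is exactly where the finiteness of $\mathcal F$ and the homogeneity of $R$ have to be used together.
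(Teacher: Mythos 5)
Your setup is the same as the paper's --- classify $G=\Aut(\mathcal{F}(R))$ among the five reducts of Theorem~\ref{ch32:them8.3}, and use Theorem~\ref{ch32:them6.1} to transfer a first-order statement from $R$ to almost all finite graphs --- and your ``easy half'' (operations in $W(G)$ cannot change $\mathcal{F}(\Gamma)$) is fine. But the hard half, which is the actual content of the theorem, is left as a programme rather than proved, and the key idea that makes the paper's argument work is missing. The paper does not construct a bespoke richness property $P_{\mathcal{F}}$ at all. Instead it observes that, in the case $\Aut(\mathcal{F}(R))=\Aut(R)$, the edges and non-edges of $R$ are $2$-types of the structure $\mathcal{F}(R)$, so by $\aleph_0$-categoricity and Theorem~\ref{ch32:them5.2} there is a first-order formula $\phi(x,y)$ \emph{in the hypergraph language} with $x\sim y$ in $R$ iff $\phi(x,y)$ holds in $\mathcal{F}(R)$. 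Translating $\phi$ back into the graph language gives a single sentence $(\forall x,y)((x\sim y)\leftrightarrow\phi^{\ast}(x,y))$ true in $R$, hence true in almost all finite $\Gamma$; in any such $\Gamma$ the edge relation is literally computable from the abstract hypergraph $\mathcal{F}(\Gamma)$ by evaluating $\phi$, so reconstruction (and the handling of arbitrary hypergraph isomorphisms) is immediate. The other four cases are handled by defining the appropriate coarser relation the same way.

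The gap in your version is the deterministic lemma ``a $P_{\mathcal{F}}$-rich $\Gamma$ has no $\mathcal{F}$-twin outside $W(G)$.'' You assert that any $\Delta\notin W(G)$ carries a bounded local witness and that richness supplies a set $S$ whose $\mathcal{F}$-membership is flipped, but this is exactly the point that needs proof: you must show that the flip pattern induced by $\Delta$ on some bounded set can be completed to a set $S$ for which $\Gamma[S]\in\mathcal{F}$ but $(\Gamma+\Delta)[S]\notin\mathcal{F}$ (or vice versa), for \emph{every} $\mathcal{F}$ not closed under the corresponding operations. That is in effect a finitary analogue of Thomas's classification, not a consequence of it; Theorem~\ref{ch32:them8.3} as stated is about permutations preserving $\mathcal{F}(R)$, and Remark~\ref{ch32:rema8.1} only gives the containment of the images of $R$ in a coset of $W(G)$, not the converse you need. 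Without that lemma (and a precise, first-order formulation of $P_{\mathcal{F}}$ strong enough to carry it), the argument does not close. The Ryll-Nardzewski definability step is the device that lets the paper avoid all of this; you should use it.
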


I sketch the proof in the first case, that in which $\mathcal{F}$ is
not closed under either complementation or switching. We distinguish
two first-order languages, that of graphs and that of hypergraphs
(with relations of the arities appropriate for the graphs in
$\mathcal{F}$). Any sentence in the hypergraph language can be
``translated'' into the graph language, by replacing ``$E$ is an
edge'' by ``the induced subgraph on $E$ is one of the graphs in
$\mathcal{F}$''.

By the case assumption and Theorem~\ref{ch32:them8.3}, we have
$\Aut(\mathcal{F}(R)) = \Aut(R)$. Now by
Theorem~\ref{ch32:them5.2}, the edges and non-edges in $R$ are
$2$-types in $\mathcal{F}(R)$, so there is a formula $\phi(x, y)$ (in
the hypergraph language) such that $x \sim y$ in $R$ if and only if
$\phi(x, y)$ holds in $\mathcal{F}(R)$. If $\phi^{\ast}$ is the
``translation'' of $\phi$, then $R$ satisfies the sentence
\[
(\forall x, y)((x \sim y) \leftrightarrow \phi^{\ast}(x, y)).
\]
By Theorem~\ref{ch32:them6.1}, this sentence holds in almost all
finite graphs. Thus, in almost all finite graphs, $\Gamma$, vertices
$x$ and $y$ are joined if and only if $\phi(x, y)$ holds in
$\mathcal{F}(\Gamma)$. So $\mathcal{F}(\Gamma)$ determines $\Gamma$
uniquely.

By Theorem~\ref{ch32:them8.3}, $\Aut(\mathcal{F}(R))$ must be
one of the five possibilities listed; in each case, an argument like
the one just given shows that the appropriate conclusion holds.

\medskip

There are many interesting overgroups of $\Aut(R)$ which are not closed,
some of which are surveyed (and their inclusions determined) in a
forthcoming paper of Cameron \emph{et al.} \cite{ch32:new5}. These
arise in one of two ways.

First, we can take automorphism groups of non-relational structures, such as
hypergraphs with infinite hyperedges (for example, take the hyperedges to
be the subsets of the vertex set which induce subgraphs isomorphic to $R$),
or topologies or filters (discussed in the next section). Second, we may
weaken the notion of automorphism. For example, we have a chain of subgroups
\[
\Aut(R)<\Aut_1(R)<\Aut_2(R)<\Aut_3(R)<\Sym(V(R))
\]
with all inclusions proper, where
\begin{itemize}
\item $\Aut_1(R)$ is the set of permutations which change only finitely many
adjacencies (such permutations are called \emph{almost automorphisms} of $R$);
\item $\Aut_2(R)$ is the set of permutations which change only finitely many
adjacencies at any vertex of $R$;
\item $\Aut_3(R)$ is the set of permutations which change only finitely many
adjacencies at all but finitely many vertices of $R$.
\end{itemize}

All these groups are \emph{highly transitive}, that is, given any two
$n$-tuples $(v_1,\ldots,v_n)$ and $(w_1,\ldots,w_n)$ of distinct vertices,
there is an element of the relevant group carrying the first tuple to the
second. This follows from $\Aut_1(R)$ by the indestructibility of $R$. If
$R_1$ and $R_2$ are the graphs obtained by deleting all edges within
$\{v_1,\ldots,v_n\}$ and within $\{w_1,\ldots,w_n\}$ respectively, then
$R_1$ and $R_2$ are both isomorphic to $R$. By homogeneity of $R$, there
is an isomorphism from $R_1$ to $R_2$ mapping $(v_1,\ldots,v_n)$ to
$(w_1,\ldots,w_n)$; clearly this map is an almost-automorphism of $R$.

Indeed, any overgroup of $R$ which is not a reduct preserves no non-trivial
relational structure, and so must be highly transitive.

\section{Topological aspects}
\label{ch32:sec2.9}

There is a natural way to define a topology on the vertex set of
$R$: we take as a basis for the open sets the set of all finite
intersections of vertex neighbourhoods. It can be shown that this
topology is homeomorphic to $\mathbb{Q}$ (using the characterization
of $\mathbb{Q}$ as the unique countable, totally disconnected,
topological space without isolated points, due to Sierpi\'{n}iski
\cite{ch32:bib48}, see also Neumann \cite{ch32:bib38}). Thus:

\begin{proposition}\label{ch32:prop9.1} 
$\Aut(R)$ is a subgroup of the homeomorphism group of $\mathbb{Q}$.
\end{proposition}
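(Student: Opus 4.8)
The plan is to produce an injective group homomorphism from $\Aut(R)$ into the homeomorphism group of the vertex set $V(R)$ under the stated topology, and then to transport it across the given homeomorphism $V(R)\cong\mathbb{Q}$ into $\mathrm{Homeo}(\mathbb{Q})$. The point to exploit is that the topology is defined purely from the adjacency relation, so it is automatically respected by the symmetries of that relation.

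First I would check that every $g\in\Aut(R)$ is a homeomorphism of $V(R)$. Writing $N(v)=\{w:w\sim v\}$ for a vertex neighbourhood, we have $g(N(v))=N(g(v))$, since $w\sim v\iff g(w)\sim g(v)$. Thus $g$ carries each vertex neighbourhood to a vertex neighbourhood, and hence carries each basic open set (a finite intersection of vertex neighbourhoods) to another basic open set; so $g$ is an open map. Applying the same reasoning to $g^{-1}$, which is also an automorphism, shows that $g$ is continuous with continuous inverse, i.e.\ a homeomorphism.

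Next, the assignment sending each automorphism to itself, viewed as a self-map of the topological space $V(R)$, is visibly a group homomorphism $\Aut(R)\to\mathrm{Homeo}(V(R))$, and it is injective because a graph automorphism fixing every vertex is the identity. Finally I would fix a homeomorphism $\phi:V(R)\to\mathbb{Q}$ --- whose existence is precisely the cited characterization of $\mathbb{Q}$ as the unique countable, totally disconnected space without isolated points --- and note that conjugation $h\mapsto\phi h\phi^{-1}$ is a group isomorphism $\mathrm{Homeo}(V(R))\to\mathrm{Homeo}(\mathbb{Q})$. Composing the two maps yields the desired embedding $\Aut(R)\hookrightarrow\mathrm{Homeo}(\mathbb{Q})$.

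The genuinely substantive content, namely that the topology on $V(R)$ is homeomorphic to $\mathbb{Q}$, has already been supplied, so no real obstacle remains; the only step meriting a moment's care is the passage from $\Aut(R)$-invariance of the \emph{subbasis} of vertex neighbourhoods to invariance of the whole topology. This is automatic, since a bijection that permutes a subbasis permutes all finite intersections and hence all unions of such, i.e.\ the entire generated topology. Thus every automorphism respects the topology, and the embedding follows.
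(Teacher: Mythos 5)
Your proposal is correct and follows essentially the same route as the paper, which simply asserts the homeomorphism $V(R)\cong\mathbb{Q}$ (via Sierpi\'nski's characterization) and then concludes the proposition with a one-word ``Thus''. You have merely made explicit the routine details the paper leaves implicit: that automorphisms permute the subbasis of vertex neighbourhoods and hence act as homeomorphisms, and that conjugation by the identifying homeomorphism transports the embedding into $\mathrm{Homeo}(\mathbb{Q})$.
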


This is related to a theorem of Mekler \cite{ch32:bib36}:

\begin{theorem}\label{ch32:them9.1} 
A countable permutation group $G$ is embeddable
in the homeomorphism group of $\mathbb{Q}$ if and only if the
intersection of the supports of any finite number of elements of $G$
is empty or infinite.
\end{theorem}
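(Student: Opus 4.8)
The plan is to prove the two directions separately: necessity is a short topological observation, while sufficiency requires manufacturing a topology on the underlying set, and that is where the support hypothesis does all the work.

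First I would prove necessity. Suppose $G$ acts faithfully by homeomorphisms on a space homeomorphic to $\mathbb{Q}$. For each $g\in G$ the fixed set $\mathrm{Fix}(g)=\{x:gx=x\}$ is the set on which the continuous maps $g$ and the identity agree, hence is closed (the space being Hausdorff). Therefore the support $\mathrm{supp}(g)=\mathbb{Q}\setminus\mathrm{Fix}(g)$ is open, and a finite intersection $\mathrm{supp}(g_1)\cap\cdots\cap\mathrm{supp}(g_n)$ is again open. Since $\mathbb{Q}$ has no isolated points, every nonempty open set is infinite, so this intersection is empty or infinite, which is exactly the asserted condition. (The simplest forbidden configuration, a transposition fixing everything else, already fails, since a two-element subset of $\mathbb{Q}$ is never open.)

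For sufficiency I would reduce to a purely combinatorial construction. By Sierpi\'nski's characterisation quoted above, it suffices to topologise the countably infinite point set $\Omega$ of $G$ so that it is Hausdorff, zero-dimensional and without isolated points, with every element of $G$ a homeomorphism. To secure all of this simultaneously I would build a countable Boolean algebra $\mathcal{B}$ of subsets of $\Omega$ which is (i) \emph{$G$-invariant}, so $gB\in\mathcal{B}$ for all $g\in G$ and $B\in\mathcal{B}$; (ii) \emph{separating}, so that for $x\neq y$ some $B\in\mathcal{B}$ contains $x$ but not $y$; and (iii) such that every nonempty member of $\mathcal{B}$ is infinite. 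Taking $\mathcal{B}$ as a base of clopen sets then makes $\Omega$ zero-dimensional and Hausdorff; (iii) forces every basic neighbourhood to be infinite, so there are no isolated points; countability of $\mathcal{B}$ gives a countable base, whence the space is metrizable and, by Sierpi\'nski, homeomorphic to $\mathbb{Q}$; and (i) says precisely that each $g$ permutes the base, so $g$ and $g^{-1}$ are continuous.

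I would construct $\mathcal{B}$ in stages, starting from $\{\emptyset,\Omega\}$ and enumerating one separation task for each pair $x\neq y$. At a typical stage I have a countable $G$-invariant Boolean algebra satisfying (iii); to handle an unseparated pair I adjoin a set $A$ with $x\in A$, $y\notin A$ and pass to the algebra generated by the old one together with all translates $\{gA:g\in G\}$. The structural fact that drives everything is that $A\triangle gA\subseteq\mathrm{supp}(g)$ for every $g$ — off $\mathrm{supp}(g)$, membership in $A$ and in $gA$ coincide — and, iterating, the ``boundaries'' created when several translates are intersected are confined to finite intersections of supports. The main obstacle, and the only place the hypothesis enters, is verifying that this closure process never produces a finite nonempty set, so that invariant (iii) survives; this is exactly what the support condition guarantees, since the potentially small cells of the refined algebra are pinned down, via the containment above, by finite intersections of supports, which are empty or infinite by assumption. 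Thus $A$ can be chosen (for instance as a union of orbits of a suitable stabiliser, adapted to the support structure) so that every nonempty cell stays infinite; performing this uniformly across all stages and taking $\mathcal{B}$ to be the union of the algebras built yields a base with properties (i)--(iii). The degenerate case where $\Omega$ is finite is immediate, since the condition then forces $G$ to be trivial, which embeds vacuously.
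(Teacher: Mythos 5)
The paper does not actually prove this theorem; it is quoted from Mekler \cite{ch32:bib36}, so there is no internal proof to compare against. Your necessity argument is complete and correct: fixed-point sets of homeomorphisms of a Hausdorff space are closed, so supports are open, a finite intersection of open sets is open, and a nonempty open subset of a space without isolated points is infinite. Your architecture for sufficiency --- topologise the domain $\Omega$ by a countable $G$-invariant, point-separating Boolean algebra of subsets all of whose nonempty members are infinite, then invoke Sierpi\'nski's characterisation of $\mathbb{Q}$ --- is also a reasonable frame, and the reduction to Sierpi\'nski is carried out correctly.

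The gap is at the one point where the hypothesis must actually be used, and the justification you offer there is a non sequitur. You argue that a cell $C\cap\bigcap_{g\in F_1}gA\cap\bigcap_{h\in F_2}(hA)^c$ of the refined algebra is contained in a finite intersection of supports (via $A\mathbin{\triangle} gA\subseteq\mathrm{supp}(g)$), and that such intersections are empty or infinite by hypothesis; but a nonempty subset of an infinite set can perfectly well be finite, so the containment does not show the cell is infinite. Moreover, that containment only constrains cells with both $F_1$ and $F_2$ nonempty: cells of the form $C\cap\bigcap_{g\in F}gA$ (and the complementary kind) are not trapped inside any support at all, and they must also be controlled, simultaneously with all generators adjoined at earlier stages. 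The real content of Mekler's theorem is precisely the construction of separating sets $A$ for which every such finite Boolean combination comes out empty or infinite; your parenthetical ``$A$ can be chosen as a union of orbits of a suitable stabiliser, adapted to the support structure'' is a placeholder for that construction, not an argument. Until such an $A$ is exhibited, together with the bookkeeping that lets you handle every pair $x\ne y$ jointly, the sufficiency direction is not established.
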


Here, the support of a permutation is the set of points it doesn't
fix. Now of course $\Aut(R)$ is not countable; yet it does
satisfy Mekler's condition. (If $x$ is moved by each of the
automorphisms $g_1,\ldots,g_n$, then the infinitely many vertices
joined to $x$ but to none of $xg_1,\ldots, xg_n$ are also moved by
these permutations.)

The embedding in Proposition \ref{ch32:prop9.1} can be realised constructively:
the topology can be defined directly from the graph. Take a basis for the
open sets to be the sets of witnesses for our defining property $(*)$; that is, 
sets of the form
\[Z(U,V)=\{z\in V(R): (\forall u\in U)(z\sim u) \wedge (\forall v\in V)(z \not \sim v)\}\]
for finite disjoint sets $U$ and $V$. Now given $u\ne v$, there is a point
$z\in Z(\{u\},\{v\})$; so the open neighbourhood of $z$ is open and closed in
the topology and contains $u$ but not $v$. So the topology is totally
disconnected. It has no isolated points, so it is homeomorphic to $\mathbb{Q}$,
by Sierpi\'nski's Theoreom.

There is another interesting topology on the vertex set of $R$, which can be
defined in three different ways. Let $B$ be the ``random bipartite graph'',
the graph with vertex set $X\cup Y$ where $X$ and $Y$ are countable and
disjoint, where edges between $X$ and $Y$ are chosen randomly. (A simple
modification of the Erd\H{o}s--R\'enyi argument shows that there is a unique
graph which occurs with probability $1$.) Now consider the following
topologies on a countable set $X$:
\begin{description}
\item{$\mathcal{T}$:} point set $V(R)$, sub-basic open sets are
open vertex neighbourhoods.
\item{$\mathcal{T}^*$:} points set $V(R)$, sub-basic open sets are
closed vertex neighbourhoods.
\item{$\mathcal{T}^\dag$:} points are one bipartite block in $B$, sub-basic
open sets are neighbourhoods of vertices in the other bipartite block.
\end{description}

\begin{proposition}
\begin{itemize}
\item[(a)] The three topologies defined above are all homeomorphic.
\item[(b)] The homeomorphism groups of these topologies are highly transitive.
\end{itemize}
\end{proposition}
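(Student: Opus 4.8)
The plan is to realise all three topologies as the \emph{incidence topology} of a bipartite structure, and then to identify each of these structures with the random bipartite graph $B$, invoking its uniqueness. By a bipartite structure I mean a set $P$ of \emph{points}, a set $Q$ of \emph{tests}, and an incidence relation; it determines a topology on $P$ whose sub-basic open sets are the sets $S_q=\{p\in P:p\text{ is incident to }q\}$ for $q\in Q$. The first thing I would record is that this topology depends only on the isomorphism type of the bipartite structure: if $(\alpha,\beta)$ is an isomorphism of bipartite structures (bijections of points and of tests respecting incidence), then $\alpha(S_q)=S_{\beta(q)}$, so $\alpha$ carries sub-basis to sub-basis and is a homeomorphism. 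Thus part (a) reduces to exhibiting a common bipartite structure.

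For $\mathcal{T}^\dag$ this is immediate: its bipartite structure is $B$ itself, with points the block $X$, tests the block $Y$, and the given incidence. For $\mathcal{T}$ I take $P=Q=V(R)$ with $z$ incident to $u$ precisely when $z\sim u$, so the sub-basic sets are the open neighbourhoods; call this structure $B_R$. For $\mathcal{T}^*$ I take the same $P=Q=V(R)$ but declare $z$ incident to $u$ when $z\in N[u]$, i.e.\ when $z=u$ or $z\sim u$, so the sub-basic sets are the closed neighbourhoods; call this structure $B^{*}_R$. It then suffices, by the uniqueness of the random bipartite graph, to verify that $B_R$ and $B^{*}_R$ each satisfy the bipartite extension property: for any finite disjoint sets of points there is a test incident to all of the first and none of the second, and symmetrically with points and tests exchanged.

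Verifying this property is where property ($\ast$) does the work, and it is also where the only real subtlety lies. For $B_R$ the extension property for tests asks, given disjoint finite $A,B\subseteq V(R)$, for a vertex $u$ adjacent to every vertex of $A$ and to none of $B$ --- this is exactly ($\ast$), and the dual extension property for points is the identical statement by symmetry of adjacency. The subtlety is that here points and tests are drawn from the \emph{same} vertex set, the incidence is symmetric, and the ``diagonal'' never contributes an edge (no vertex is joined to itself), whereas a genuinely bipartite structure has no such coincidences. The remedy is Proposition~\ref{ch32:prop3.1}: the witnesses to ($\ast$) are not merely present but infinite in number, so I may always choose the required test $u$ outside the finite set $A\cup B$, sidestepping every diagonal coincidence. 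The same choice handles $B^{*}_R$, whose incidence includes the diagonal: taking $u\notin A\cup B$ adjacent to $A$ and nonadjacent to $B$ makes each point of $A$ incident to $u$ and each point of $B$ non-incident, as needed. Hence $B_R\cong B\cong B^{*}_R$, and by the first paragraph these isomorphisms induce homeomorphisms $\mathcal{T}\cong\mathcal{T}^\dag\cong\mathcal{T}^{*}$, proving (a).

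For part (b) I would exploit the homogeneity of $B$. The class of finite bipartite graphs with distinguished blocks is a Fra\"{i}ss\'{e} class, so by Theorem~\ref{ch32:them5.1} its limit $B$ is homogeneous. Given two $n$-tuples of distinct points in $X$, the induced substructures carry no edges (all edges of $B$ run between the blocks), so the bijection matching the tuples is a partial isomorphism and extends to a block-preserving automorphism $g$ of $B$. Since $g$ permutes the tests $Y$, it permutes the sub-basic sets $N(y)\cap X$ and so restricts to a homeomorphism of $\mathcal{T}^\dag$ carrying the first tuple to the second; hence the homeomorphism group of $\mathcal{T}^\dag$ is highly transitive. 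Transporting along the homeomorphisms of part (a) gives the same conclusion for $\mathcal{T}$ and $\mathcal{T}^{*}$. The main obstacle throughout is the bookkeeping of the diagonal just described; once Proposition~\ref{ch32:prop3.1} is used to avoid it, everything reduces to the extension property together with the uniqueness and homogeneity of $B$.
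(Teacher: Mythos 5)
The paper states this proposition without proof (it passes straight to the remark that the topologies are homeomorphic but not identical), so there is no official argument to compare against; but your proof is correct and is surely the intended one: the presence of $\mathcal{T}^\dag$ in the statement signals that all three are to be recognised as incidence topologies of bipartite structures with the two-sided extension property, hence of structures isomorphic to the random bipartite graph $B$, and part (b) then follows from the homogeneity of $B$ as a two-sorted structure. The one genuinely delicate point is the diagonal in $\mathcal{T}^*$ --- a witness $u$ lying in the ``forbidden'' set would wrongly lie in its own closed neighbourhood --- and you dispose of it correctly by using Proposition~\ref{ch32:prop3.1} to pick the witness outside $A\cup B$. (For $\mathcal{T}$ the precaution is not even needed: a vertex is never adjacent to itself, and the two blocks of $B_R$ are formally disjoint copies of $V(R)$, so no coincidence can arise there.) Your reduction of (b) to block-preserving automorphisms of $B$, using that an $n$-tuple inside one block induces an edgeless substructure, is also sound.
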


Note that the topologies are homeomorphic but not identical. For example,
the identity map is a continuous bijection from $\mathcal{T}^*$ to
$\mathcal{T}$, but is not a homeomorphism.

\section{Some other structures}
\label{ch32:sec2.10}

\subsection{General results}

As we have seen, $R$ has several properties of a general kind: for
example, homogeneity, $\aleph_0$-categoricity, universality,
ubiquity. Much effort has gone into studying, and if possible
characterizing, structures of other kinds with these properties.
(For example, they are all shared by the ordered set $\mathbb{Q}$.)

Note that, of the four properties listed, the first two each imply
the third, and the first implies the fourth. Moreover, a homogeneous
structure over a finite relational language is
$\aleph_0$-categorical, since there are only finitely many
isomorphism types of $n$-element structure for each $n$. Thus,
homogeneity is in practice the strongest condition, most likely to
lead to characterizations.

A major result of Lachlan and Woodrow \cite{ch32:bib35} determines
the countable homogeneous graphs. The graphs $H_n$ in this theorem are
so-called because they were first constructed by Henson \cite{ch32:bib26}.

\begin{theorem}\label{ch32:them10.1} 
A countable homogeneous graph is isomorphic to one of the following:
\begin{itemize}
\item[(a)] the disjoining union of $m$ complete graphs of size $n$, where $m, n \leq \aleph_0$ and at
least one of $m$ and $n$ is $\aleph_0$;
\item[(b)] complements of (a);
\item[(c)] the Fra\"{i}ss\'{e} limit $H_n$ of the class of $K_n$-free graphs, for fixed $n \geq 3$;
\item[(d)] complements of (c);
\item[(e)] the random graph $R$.
\end{itemize}
\end{theorem}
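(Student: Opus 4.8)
The plan is to classify the countable homogeneous graphs by a careful amalgamation/Fraïssé analysis, showing that the possible ages are severely constrained. By Theorem~\ref{ch32:them5.1}, a countable homogeneous graph is determined up to isomorphism by its age, a Fraïssé class $\mathcal{C}$ of finite graphs, so the task reduces to classifying the hereditary, amalgamation-closed classes of finite graphs. The first step is to dispose of the ``imprimitive'' cases: I would examine when the age fails to contain some small graph. If $\mathcal{C}$ omits the edge $K_2$ we get the null graph (a degenerate case of (a) with $n=1$); if it omits the non-edge $\overline{K_2}$ we get a complete graph. Dually, one studies the forbidden subgraphs. The key dichotomy is whether the class is closed under the operation forcing the graph to be a disjoint union of cliques, equivalently whether $\overline{K_2}+K_1$ (the path $P_3$) is forbidden in some guise; by a short argument a $P_3$-free graph is a disjoint union of complete graphs, which yields families (a) and (b).

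The heart of the matter is the remaining case, where the age contains all ``small'' configurations but may forbid large cliques. The plan is to show that the only nontrivial way a hereditary amalgamation class of graphs can fail to be the class of \emph{all} finite graphs (whose limit is $R$ by the remark after Theorem~\ref{ch32:them5.1}) is to forbid exactly $K_n$ for some fixed $n\ge 3$, giving Henson's graph $H_n$ in (c), or dually to forbid an independent set $\overline{K_n}$, giving (d). Concretely, I would let $\mathcal{F}$ be the set of minimal finite graphs \emph{not} in $\mathcal{C}$ (the forbidden subgraphs), so that $\mathcal{C}$ is exactly the class of $\mathcal{F}$-free graphs, and then analyze which $\mathcal{F}$ are compatible with the amalgamation property. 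The crucial structural fact to establish is that amalgamation forces the forbidden graphs to be \emph{cliques}: if some $F\in\mathcal{F}$ is not complete, it contains two non-adjacent vertices, and one can amalgamate two copies of $F$ minus a vertex over their common part to produce a forbidden configuration unless $\mathcal{C}$ is already all graphs. This is where the combinatorial work concentrates.

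The main obstacle, as I see it, is precisely this amalgamation analysis of forbidden subgraphs: showing that a proper, hereditary, amalgamation-closed class of graphs must forbid a single clique (or, dually after complementation, a single anticlique). One has to rule out forbidding more exotic families---for instance a single non-complete graph $F$, or an infinite antichain of forbidden graphs. The delicate point is that amalgamation over a shared subgraph can create new copies of small graphs, so if $F$ has an induced non-edge, the two ``branches'' of an amalgam can be joined by that missing edge freely, and iterating this argument shows $\mathcal{F}$ cannot contain any graph with an induced non-edge other than possibly forcing the whole class to degenerate. Making this rigorous requires a clean induction on $|F|$ together with the free/strong amalgamation available for graphs. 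Once $\mathcal{F}$ is known to consist of complete graphs, minimality and the hereditary property force $\mathcal{F}=\{K_n\}$ for a single $n$, and the self-duality of the problem under complementation (switching the roles of edges and non-edges, as in Proposition~\ref{ch32:prop3.5}) accounts for the dual families (b) and (d).

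Finally I would assemble the pieces: each surviving class is verified to genuinely have the amalgamation property (free amalgamation works for $K_n$-free graphs since joining two $K_n$-free graphs with no edges across introduces no new clique), so by Theorem~\ref{ch32:them5.1}(a) each yields a homogeneous graph, and by part (b) this graph is unique---giving exactly the list (a)--(e). The case $\mathcal{F}=\emptyset$ recovers $R$ as the limit of all finite graphs, the case $\mathcal{F}=\{K_n\}$ with $n\ge 3$ gives $H_n$, and the $P_3$-free analysis gives the disjoint-unions-of-cliques families together with their complements. I expect the forward verifications (that these classes really amalgamate) to be routine; the genuine difficulty, and the step deserving the most care, is the converse---that \emph{no other} Fraïssé class of graphs exists.
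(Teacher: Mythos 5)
The paper offers no proof of this statement: it is the Lachlan--Woodrow classification, quoted from \cite{ch32:bib35}, whose published proof runs to several dozen pages. So your proposal must be judged on its own terms, and on those terms it has a genuine gap at exactly the point you flag as ``where the combinatorial work concentrates''. Your reduction to classifying Fra\"iss\'e classes of finite graphs, and your treatment of the degenerate cases ($P_3$-free graphs are disjoint unions of cliques, giving (a) and (b)), are fine. But the crucial claim --- that in the remaining case every minimal forbidden induced subgraph must be a clique --- is not established by the amalgamation you describe. If $F$ is minimal forbidden with a non-edge $uv$, amalgamating $F-u$ with $F-v$ over $F-\{u,v\}$ only asserts that \emph{some} amalgam exists; the class is free to complete the diagram by adding the edge $uv$, so all you learn is that $F$ with that edge filled in lies in the age, which is no contradiction. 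Concretely, this mechanism cannot rule out the class of graphs with no induced $5$-cycle: there the completed amalgam $C_5+uv$ is itself $C_5$-free, so the one-point amalgamation closes up harmlessly, yet the theorem requires this class (and every other class with a non-complete minimal forbidden subgraph besides $P_3$ and its complement) to be excluded. Excluding them is the actual content of Lachlan and Woodrow's theorem.

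Their argument is not ``a clean induction on $|F|$'': it is a lengthy double induction showing that once the age of a homogeneous graph contains certain seed configurations (roughly, arbitrarily large cliques together with arbitrarily large independent sets and one mixed graph), it must contain \emph{all} finite graphs; the engine is a sequence of individually engineered amalgamations over bases much larger than a single vertex, combined with Ramsey-type arguments. Your forward verifications --- free amalgamation for the $K_n$-free classes, uniqueness of the Fra\"iss\'e limit via Theorem~\ref{ch32:them5.1}(b), and closure of the list under complementation --- are correct and routine, as you say. But the converse direction, which you defer to an unspecified induction, \emph{is} the theorem, and the specific amalgamation you propose to drive it fails.
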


The result of Macpherson and Tent \cite{ch32:new11} shows that the
automorphism groups of the Henson graphs are simple.
It follows from Proposition \ref{ch32:reconst} that $\Aut(R)$ is not
isomorphic to $\Aut(H_n)$. It is
not known whether these groups are pairwise non-isomorphic.

Other classes in which the homogeneous structures have been
determined include finite graphs (Gardiner \cite{ch32:bib23}),
tournaments (Lachlan \cite{ch32:bib34} --- surprisingly, there are
just three), digraphs (Cherlin \cite{ch32:bib12} (there are 
uncountably many, see Henson \cite{ch32:bib27}), and posets (Schmerl
\cite{ch32:bib45}). In the case of posets, Droste \cite{ch32:bib14} has
characterizations under weaker assumptions.

For a number of structures, properties of the automorphism group,
such as normal subgroups, small index property, or existence of
generic automorphisms, have been established.

A theorem of Cameron \cite{ch32:bib4} determines the reducts of
$\Aut(\mathbb{Q})$:

\begin{theorem}\label{ch32:them10.2} 
There are just five closed permutation groups
containing the group $\Aut(\mathbb{Q})$ of order-preserving
permutations of $\mathbb{Q}$, viz.: $\Aut(\mathbb{Q})$; the group of
order preserving or reversing permutations; the group of
permutations preserving a cyclic order; the group of permutations
preserving or reversing a cyclic order; and $\Sym(\mathbb{Q})$.
\end{theorem}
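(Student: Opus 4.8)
The plan is to exploit that $(\mathbb{Q},<)$ is $\aleph_0$-categorical, so that by the Engeler--Ryll-Nardzewski--Svenonius theorem (Theorem~\ref{ch32:them5.2}) the group $\Aut(\mathbb{Q})$ is oligomorphic: it has only finitely many orbits on $\mathbb{Q}^n$ for each $n$. Consequently every closed overgroup $G$ with $\Aut(\mathbb{Q})\le G\le\Sym(\mathbb{Q})$ is again oligomorphic, and is recovered as the automorphism group of the relational structure whose relations are exactly the $G$-invariant relations of each finite arity. Thus classifying the reducts reduces to determining which unions of $\Aut(\mathbb{Q})$-orbits on tuples can serve as the invariant relations of a closed group. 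First I would exhibit the five candidates: the order $<$ (giving $\Aut(\mathbb{Q})$); the \emph{betweenness} relation $B(x;y,z)$ (``$x$ lies between $y$ and $z$''), invariant under order-preserving and order-reversing maps; the \emph{cyclic order} $C(x,y,z)$, invariant under the circular-order group; the \emph{separation} relation $S(x,y;z,w)$, invariant under the circular-order-preserving-or-reversing group; and the empty signature, giving $\Sym(\mathbb{Q})$. One checks directly that each of these groups is closed, that the named group is the full automorphism group of the corresponding relation, and that they satisfy the stated inclusions.

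The completeness argument I would run by climbing the arity. At arity $2$: the only proper nonempty $\Aut(\mathbb{Q})$-invariant binary relations are $<$ and $>$, which have the same automorphism group; hence if $G$ preserves any nontrivial binary relation then $G=\Aut(\mathbb{Q})$, and otherwise $G$ is $2$-transitive. Assume now $G$ is $2$-transitive. At arity $3$ the six order-types of a triple of distinct points are grouped into the $G$-orbits, and this grouping is a $G$-invariant partition of the six types. Using $2$-transitivity together with the homogeneity and density of $\mathbb{Q}$, I would show that the only admissible partitions are the three-block one (``which point is the middle'', i.e.\ $G$ preserves $B$), the two-block one (the two circular orientations, i.e.\ $G$ preserves $C$), and the one-block one ($G$ is $3$-transitive). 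In the first case $\Aut(\mathbb{Q})$ has index $2$ in $\Aut(B)$, with no room for an intermediate closed group, so $G=\Aut(B)$; in the second case a parallel rigidity forces $G$ to be the circular-order group.

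It remains to treat the $3$-transitive case by passing to quadruples. Here either $G$ preserves the separation relation $S$, in which case the same index/rigidity analysis gives that $G$ is the circular-order-preserving-or-reversing group, or $G$ merges the separation orbits and is therefore $4$-transitive; an induction, feeding each new merge back through the back-and-forth homogeneity of $\mathbb{Q}$, then shows that $G$ is $k$-transitive for all $k$, hence highly transitive, and being closed $G=\Sym(\mathbb{Q})$. The hard part is the arity-$3$ and arity-$4$ analysis: one must rule out every ``exotic'' partial merging of orbits that does not respect the betweenness, circular, or separation structure. The cleanest way to close this gap is a Ramsey-type argument, using density to embed arbitrarily many copies of any finite configuration, so that any orbit identification not governed by one of these four canonical relations propagates to a full symmetry on the relevant tuples, collapsing $G$ to the next group up the chain.
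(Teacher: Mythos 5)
The paper does not actually prove Theorem~\ref{ch32:them10.2}; it only states the result and cites Cameron \cite{ch32:bib4}, so there is no in-text argument to compare yours against. Judged on its own, your proposal correctly sets up the standard framework: closed overgroups of an oligomorphic group are exactly the automorphism groups of reducts, the five candidate groups are the right ones (with betweenness, the circular order, and the separation relation as the invariant relations), and the classification does reduce to deciding which coarsenings of the $\Aut(\mathbb{Q})$-orbits on $k$-tuples can be the orbit partition of a closed overgroup. The final step (a closed highly transitive group is the full symmetric group) is also correct.

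The gap is that the entire mathematical content of the theorem lies in the step you explicitly defer: ruling out the ``exotic'' orbit-mergings. At arity $3$ the six order-types of a triple carry a regular $S_3$-action by permuting coordinates, so a $G$-invariant coarsening must be a partition into cosets of a subgroup of $S_3$; but there are three subgroups of order $2$, and only one of them (generated by reversal of the outer coordinates) gives betweenness. You must actually show that the other two coset partitions (which would correspond to a relation remembering, say, only which element of the triple is largest) cannot be the orbit partition of a group containing $\Aut(\mathbb{Q})$, and the situation at arity $4$ is worse because $S_4$ has many more subgroups. Moreover, your ``index/rigidity'' closure arguments only work as stated for betweenness, where $\Aut(\mathbb{Q})$ has index $2$ in $\Aut(B)$; for the circular and separation cases the index is infinite, so knowing that $G$ preserves $C$ and is $2$-transitive does not immediately give $G=\Aut(C)$ --- you need a separate argument that no proper closed subgroup of $\Aut(C)$ properly contains $\Aut(\mathbb{Q})$. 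Finally, the induction ``$4$-transitive and not preserving separation implies $k$-transitive for all $k$'' is asserted, not proved; this is precisely Cameron's theorem on highly homogeneous groups, and the Ramsey-theoretic mechanism you gesture at is the right tool but must be carried out. As it stands the proposal is a correct plan with the theorem's core left open.
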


However, there is no analogue of Theorem~\ref{ch32:them8.4} in this
case, since there is no Glebskii--B1ass--Fagin--Harary theory for
ordered sets. ($\mathbb{Q}$ is dense; this is a first-order property,
but no finite ordered set is dense.)

Simon Thomas \cite{ch32:new15} has determined the reducts of the random
$k$-uniform hypergraph for all $k$.

Since my paper with Paul Erd\H{o}s concerns sum-free sets (Cameron
and Erd\H{o}s \cite{ch32:bib8}), it is appropriate to discuss their
relevance here. Let $H_n$ be the Fra\"{i}ss\'{e} limit of the class
of $K_n$-free graphs, for $n \geq 3$ (see
Theorem~\ref{ch32:them10.1}). These graphs were first constructed
by Henson \cite{ch32:bib26}, who also showed that $H_3$ admits
cyclic automorphisms but $H_n$ does not for $n > 3$. We have seen
how a subset $S$ of $\mathbb{N}$ gives rise to a graph $\Gamma(S)$
admitting a cyclic automorphism: the vertex set is $\mathbb{Z}$, and
$x \sim y$ if and only if $|x - y|\in S$. Now $\Gamma(S)$ is
triangle-free if and only if $S$ is \emph{sum-free} (i.e., $x,y
\in S \Rightarrow x + y \notin S$). It can be shown that, for almost
all sum-free sets $S$ (in the sense of Baire category), the graph
$\Gamma(S)$ is isomorphic to $H_3$; so $H_3$ has $2^{\aleph_0}$
non-conjugate cyclic automorphisms. However, the analogue of this
statement for measure is false; and, indeed, random sum-free sets
have a rich and surprising structure which is not well understood
(Cameron \cite{ch32:bib5}). For example, the probability that
$\Gamma(S)$ is bipartite is approximately $0.218$. It is conjectured
that a random sum-free set $S$ almost never satisfies $\Gamma(S)\cong H_3$.
In this direction, Schoen \cite{ch32:new14} has shown that, if
$\Gamma(S)\cong H_3$, then $S$ has density zero.

The Henson $K_n$-free graphs $H_n$, being homogeneous, are ubiquitous in the 
sense of Baire category: for example, the set of graphs isomorphic to $H_3$
is residual in the set of triangle-free graphs on a given countable vertex
set (so $H_3$ is ubiquitous, in the sense defined earlier). However, until
recently, no measure-theoretic analogue was known. We saw after Proposition
\ref{ch32:prop7.1} that a random triangle-free graph is almost surely
bipartite! However, Petrov and Vershik \cite{ch32:new13} recently
managed to construct
an exchangeable measure on graphs on a given countable vertex set which is
concentrated on Henson's graph. More recently, Ackerman, Freer and Patel
showed that the construction works much more generally: the necessary and
sufficient condition turns out to be the strong amalgamation property, which
we discussed in Section \ref{ch32:sec2.5}.

Universality of a structure $M$ was defined in a somewhat
introverted way in Section~\ref{ch32:sec2.5}: $M$ is universal if
every structure younger than $M$ is embeddable in $M$. A more
general definition would start with a class $\mathcal{C}$ of
structures, and say that $M \in \mathcal{C}$ is \emph{universal}
for $\mathcal{C}$ if every member of $\mathcal{C}$ embeds into $M$.
For a survey on this sort of universality, for
various classes of graphs, see Komjath and Pach \cite{ch32:bib33}.
Two negative results, for the classes of locally finite graphs and
of planar graphs, are due to De Bruijn (see Rado \cite{ch32:bib42})
and Pach \cite{ch32:bib40} respectively.

\subsection{The Urysohn space}

A remarkable example of a homogeneous structure is the celebrated
Urysohn space, whose construction predates Fra\"{\i}ss\'e's work by
more than two decades. Urysohn's paper \cite{ch32:new16} was published
posthumously, following his drowning in the Bay of Biscay at the age of
26 on his first visit to western Europe (one of the most romantic stories
in mathematics). An exposition of the Urysohn space is given by 
Vershik \cite{ch32:new17}.

The Urysohn space is a complete separable metric space $\mathbb{U}$ which
is universal (every finite metric space is isometrically embeddable in
$\mathbb{U}$) and homogeneous (any isometry between finite subsets can be
extended to an isometry of the whole space). Since $\mathbb{U}$ is
uncountable, it is not strictly covered by the Fra\"{\i}ss\'e theory, but
one can proceed as follows. The set of finite \emph{rational} metric
spaces (those with all distances rational) is a Fra\"{\i}ss\'e class;
the restriction to countable distances ensures that there are only countably
many non-isomorphic members. Its Fra\"{\i}ss\'e limit is the so-called
\emph{rational Urysohn space} $\mathbb{U}_Q$. Now the Urysohn space is the
completion of $\mathbb{U}_Q$.

Other interesting homogeneous metric spaces can be constructed similarly,
by restricting the values of the metric in the finite spaces. For example,
we can take integral distances, and obtain the \emph{integral Urysohn space}
$\mathbb{U}_Z$. We can also take distances from the set $\{0,1,2,\ldots,k\}$
and obtain a countable homogeneous metric space with these distances. For
$k=2$, we obtain precisely the path metric of the random graph $R$. (Property
$(\ast)$ guarantees that, given two points at distance $2$, there is a point
at distance $1$ from both; so, defining two points to be adjacent if they are
at distance $1$, we obtain a graph whose path metric is the given metric. It
is easily seen that this graph is isomorphic to $R$.)

Note that $R$ occurs
in many different ways as a reduct of $\mathbb{U}_Q$. Split the positive
rationals into two dense subsets $A$ and $B$, and let two points $v,w$ be
adjacent if $d(v,w)\in A$; the graph we obtain is $R$.

A study of the isometry group of the Urysohn space, similar to that done for
$R$, was given by Cameron and Vershik \cite{ch32:new7}. The automorphism
group is not simple, since the isometries which move every point by a bounded
distance form a non-trivial normal subgroup.

\subsection{KPT theory}

I conclude with a brief discussion of a dramatic development at the interface
of homogeneous structures, Ramsey theory, and topological dynamics.

The first intimation of such a connection was pointed out by 
Ne\v{s}et\v{r}il \cite{ch32:new10}, and in detail in Hubi\v{c}ka and
Ne\v{s}et\v{r}il \cite{ch32:new9}. We use the notation $A\choose B$ for the
set of all substructures of $A$ isomorphic to $B$. A class $\mathcal{C}$ of
finite structures is a \emph{Ramsey class} if, given a natural number $r$ and
a pair $A,B$ of structures in $\mathcal{C}$, there exists a structure
$C\in\mathcal{C}$ such that, if $C\choose A$ is partitioned into $r$
classes, then there is an element $B'\in{C\choose B}$ for which $B'\choose A$
is contained in a single class. In other words, if we colour the
$A$-substructures of $C$ with $r$ colours, then there is a $B$-substructure
of $C$, all of whose $A$-substructures belong to the same class. The
classical theorem of Ramsey asserts that the class of finite sets is a
Ramsey class.

\begin{theorem}
A hereditary, isomorphism-closed Ramsey class is a Fra\"{\i}ss\'e class.
\end{theorem}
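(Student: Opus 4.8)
The plan is to check the four defining conditions of a Fra\"{\i}ss\'e class for our class $\mathcal C$. Isomorphism-closure and closure under induced substructures (heredity) are hypotheses, so nothing is needed there. Countability of the set of isomorphism types is a feature of the language rather than of the Ramsey property: over a fixed, at most countable, relational language there are only countably many structures on each finite point set $\{0,1,\dots,n-1\}$, hence only countably many up to isomorphism. Thus the whole content of the theorem is to derive the amalgamation property (which, following the convention of this chapter, we take to include the case $A=\emptyset$) from the Ramsey property.

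For amalgamation over a nonempty $A$, given $A,B_1,B_2\in\mathcal C$ with embeddings of $A$ into each $B_i$, I would realise both $B_1$ and $B_2$ inside a single large Ramsey object and then use monochromaticity to force their copies of $A$ to be identified. Concretely: invoke the Ramsey property to obtain $C\in\mathcal C$ with $C\to(B_2)^A_r$ for a suitable number $r$ of colours; colour each copy $A'\in\binom{C}{A}$ by data recording the way $A'$ can be completed, inside a fixed host structure, to a copy of $B_1$ over $A'$; apply the partition property to extract $B_2'\in\binom{C}{B_2}$ all of whose copies of $A$ receive one and the same colour; and then read off from that uniform colour a copy of $B_1$ meeting $B_2'$ in precisely the distinguished copy of $A$. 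The substructure of $C$ spanned by $B_2'$ together with this copy of $B_1$ is the desired amalgam, and it lies in $\mathcal C$ by heredity. The empty-$A$ instance of this scheme is exactly the joint embedding (directedness) of $\mathcal C$, so it is worth isolating and treating first.

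The hard part will be engineering the colouring so that a single colour genuinely certifies an \emph{aligned} copy of $B_1$, i.e.\ so that ``all copies of $A$ in $B_2'$ behave alike'' translates into ``$B_1$ and $B_2$ overlap in exactly $A$.'' Two points demand care. First, if $A$ has nontrivial automorphisms then a subset $A'\subseteq C$ does not determine a unique identification with $A$, so the colour is ill defined on $\binom{C}{A}$; the standard remedy, and the reason Ramsey classes are usually taken to consist of rigid (e.g.\ linearly ordered) structures, is to colour \emph{embeddings} of $A$ rather than subsets, which restores well-definedness and makes monochromaticity yield an honest embedding rather than a mere set-theoretic overlap. Second, one must ensure the host structure used to define the colours already contains the relevant copies of $B_1$; this is where the joint-embedding step secured above is used. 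Once the colouring is set up correctly, extracting the amalgam and checking membership in $\mathcal C$ are immediate from heredity, completing the verification that $\mathcal C$ is a Fra\"{\i}ss\'e class.
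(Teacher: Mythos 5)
The paper states this theorem without proof, so I can only assess your argument on its own terms. Your overall strategy --- colour the copies of $A$ in a suitable Ramsey object according to how they complete to a copy of $B_1$, and extract the amalgam from a monochromatic copy of the target --- is indeed the standard Ne\v{s}et\v{r}il argument, and your remarks about colouring embeddings rather than substructures (to cope with automorphisms of $A$) and about heredity supplying membership of the amalgam in $\mathcal{C}$ are sound. But there is a concrete flaw in the way you invoke the partition property: you take $C\to(B_2)^A_r$ and extract a monochromatic $B_2'\in\binom{C}{B_2}$. Nothing prevents $B_2'$ from being monochromatic in the \emph{bad} colour (``no completion to a copy of $B_1$ over this copy of $A$ exists''), in which case there is nothing to ``read off''; the mere presence of copies of $B_1$ elsewhere in $C$ does not help. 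The repair is to use joint embedding at a different point: embed $B_1$ and $B_2$ jointly into some $B\in\mathcal{C}$, take $C$ with $C\to(B)^A_2$, and colour an embedded copy of $A$ good if it extends to an embedded copy of $B_1$ inside $C$. The monochromatic copy $B'$ of $B$ contains a copy of $B_1$ whose distinguished copy of $A$ is certainly good, so the uniform colour on $B'$ is good; hence the distinguished copy of $A$ inside the copy of $B_2$ in $B'$ also extends to a copy of $B_1$ in $C$, and the induced substructure on the union of these two copies is the required amalgam.

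Three smaller points. First, you are aiming at more than is needed: the amalgamation property only requires the two images of $A$ to coincide, not that the copies of $B_1$ and $B_2$ meet in exactly $A$ --- that is the \emph{strong} amalgamation property, which does not follow from the Ramsey property, so engineering the colouring to force ``overlap exactly $A$'' is both unnecessary and doomed. Second, joint embedding (the case $A=\emptyset$, which the paper's definition of amalgamation explicitly includes) does not follow from the partition relation as defined here, since $C\to(B)^A_r$ is vacuously satisfiable by $C=B$ whenever $A$ does not embed in $B$; you correctly isolate this step but never supply an argument, and in the literature it is taken as a hypothesis or built into the definition of a Ramsey class. Third, ``countably many isomorphism types'' is automatic only over a finite relational language; over a countably infinite relational language even one-point structures fall into $2^{\aleph_0}$ isomorphism classes, so this condition too is really an extra hypothesis rather than a consequence of countability of the language.
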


There are simple examples which show that a good theory of Ramsey classes
can only be obtained by making the objects rigid. The simplest way to do this
is to require that a total order is part of the structure. Note that, if a
Fra\"{\i}ss\'e class has the strong amalgamation property, than we may adjoin
to it a total order (independent of the rest of the structure) to obtain a
new Ramsey class. We refer to \emph{ordered structures} in this situation.
Now the theorem above suggests a procedure for finding Ramsey classes: take
a Fra\"{\i}ss\'e class of ordered structures and test the Ramsey property.
A number of Ramsey classes, old and new, arise in this way: ordered graphs,
$K_n$-free graphs, metric spaces, etc. Indeed, if we take an ordered set
and ``order'' it as above to obtain a set with two orderings, we obtain
the class of \emph{permutation patterns}, which is also a Ramsey class:
see Cameron \cite{ch32:new2}, B\"ottcher and Foniok \cite{ch32:new4}.

The third vertex of the triangle was quite unexpected. 

A \emph{flow} is a continuous action of a topological group $G$ on a 
topological space $X$, usually assumed to be a compact Hausdorff space.
A topological group $G$ admits a unique \emph{minimal flow}, or universal
minimal continuous action on a compact space $X$. (Here \emph{minimal} means
that $X$ has no non-empty proper closed $G$-invariant subspace, and
\emph{universal} means that it can be mapped onto any minimal $G$-flow.)

The group $G$ is said to be \emph{extremely amenable} if its minimal flow
consists of a single point.

The theorem of Kechris, Pestov and Todorcevic \cite{ch32:new10} asserts:

\begin{theorem}
Let $X$ be a countable set, and $G$ a closed subgroup of $\Sym(X)$. Then
$G$ is extremely amenable if and only if it is the automorphism group of
a homogeneous structure whose age is a Ramsey class of ordered structures.
\end{theorem}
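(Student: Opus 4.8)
The plan is to work through the canonical object of abstract topological dynamics, the \emph{greatest ambit} $S(G)$ of $G$ (its Samuel compactification for the right uniformity). The universal minimal flow is a minimal subflow of $S(G)$, so $G$ is extremely amenable precisely when $S(G)$ itself carries a $G$-fixed point. Using the Proposition that identifies closed subgroups of $\Sym(X)$ with automorphism groups of homogeneous relational structures, I may write $G=\Aut(M)$ for a homogeneous $M$ whose age $\mathcal{K}$ is a Fra\"{i}ss\'{e} class, and phrase everything combinatorially in terms of $\mathcal{K}$ and the finite substructures of $M$. The first point to settle is the \emph{necessity of an order}: the set $\mathrm{LO}(X)$ of linear orderings of $X$ is a closed, hence compact, subset of $2^{X\times X}$ on which $G$ acts continuously, so if $G$ is extremely amenable this flow has a fixed point, i.e.\ a $G$-invariant linear order $<$ on $X$. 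Adjoining $<$ to $M$ realises $G$ as $\Aut(M,<)$ for an \emph{ordered} homogeneous structure, so its age is automatically a class of ordered (hence rigid) structures, as the statement demands.

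The heart of the argument is the equivalence, for an ordered homogeneous $M$, between extreme amenability of $G$ and the Ramsey property of $\mathcal{K}$. The bridge is that a bounded right-uniformly continuous function $f\colon G\to[0,1]$ is, up to any $\varepsilon$, determined by the restriction $g\mapsto g|_{A}$ to some finite $A\subseteq M$, so $f$ essentially descends to a finite colouring of the set $\binom{M}{A}$ of copies of $A$ in $M$. Extreme amenability is equivalent to \emph{finite oscillation stability}: for every such $f$, every finite $V\subseteq G$, and every $\varepsilon$, there is $g\in G$ with $\operatorname{osc}(f|_{Vg})<\varepsilon$. I would prove the two implications separately against this dictionary.

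For the direction Ramsey $\Rightarrow$ extreme amenability, given finitely many colourings of copies of some $A$ (encoding the relevant functions) together with a finite target $B$, the Ramsey property produces a $C$ with $C\to(B)^{A}_{r}$; embedding $C$ into $M$ and using homogeneity yields a copy $B'$ of $B$ inside $M$ on which all the colourings are constant, which is exactly the approximate monochromaticity needed to locate a fixed point in $S(G)$. For the converse, if $\mathcal{K}$ failed the Ramsey property for some $A,B,r$, a compactness argument (K\"{o}nig's Infinity Lemma applied to the failing witnesses $C$) produces a single bad colouring $c\colon\binom{M}{A}\to r$ admitting no monochromatic copy of $B$; the closure of the $G$-orbit of the point of $S(G)$ coded by $c$ is then a flow with no fixed point, contradicting extreme amenability.

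The step I expect to be the main obstacle is this last equivalence, and within it the careful passage between the topological-dynamical notion---a fixed point in every compact flow---and the purely combinatorial Ramsey statement. Making precise the claim that $f$ is approximately determined by $g|_{A}$, and verifying that finite oscillation stability is genuinely equivalent to the fixed-point property of $S(G)$, are the delicate points; the compactness lift in the converse direction also needs care, to ensure that the bad colouring of $M$ blocks \emph{every} fixed point rather than merely some of them.
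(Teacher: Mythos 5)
The paper does not prove this statement: it is the Kechris--Pestov--Todorcevic theorem, quoted from \cite{ch32:new10} with no argument given, so there is no in-text proof to compare yours against. Judged on its own terms, your outline faithfully reproduces the actual KPT strategy: the greatest ambit $S(G)$ and the reduction of extreme amenability to a fixed point there; the compact flow $\mathrm{LO}(X)\subseteq 2^{X\times X}$ forcing a $G$-invariant linear order; the dictionary between right-uniformly continuous functions on $G$ and colourings of ${M\choose A}$; and the two implications via finite oscillation stability in one direction and a K\"onig-type compactness lift of a failing Ramsey instance in the other. The contrapositive form of your converse is sound: a fixed point of the orbit closure of a colouring $c$ in the compact space $r^{{M\choose A}}$ is a $G$-invariant, hence (by transitivity on copies of $A$) constant, colouring, and a constant colouring lies in $\overline{G\cdot c}$ only if $c$ admits monochromatic copies of $B$. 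Three points you gloss over deserve explicit treatment, though all are known to work: (i) after adjoining the invariant order $<$ you must check that $(M,<)$ is still homogeneous --- this is cleanest if $M$ is taken to be the canonical structure whose relations are all orbits of $G$ on tuples, so that $<$, being invariant, is already a union of such orbits; (ii) the transitivity of $G$ on ${M\choose A}$, which underpins both directions, requires the rigidity supplied by the ordering, and is exactly where ``ordered structures'' enters irreducibly; (iii) in the direction from the Ramsey property to a fixed point you must handle finitely many functions and finitely many finite sets $A_i$ simultaneously, via product colourings and a finite-intersection-property argument among closed subsets of $S(G)$. With those filled in, this is the standard proof.
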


As a simple example, the theorem shows that $\Aut(\mathbb{Q})$ (the group
of order-preserving permutations of $\mathbb{Q}$ is extremely amenable
(a result of Pestov).

The fact that the two conditions are equivalent allows information to be
transferred in both directions between combinatorics and topological dynamics.
In particular, known Ramsey classes such as ordered graphs, ordered $K_n$-free
graphs, ordered metric spaces, and permutation patterns give examples of
extremely amenable groups.

The theorem can also be used in determining the minimal flows for various
closed subgroups of $\Sym(X)$. For example, the minimal flow for $\Sym(X)$ is
the set of all total orderings of $X$ (a result of Glasner and Weiss
\cite{ch32:new8}).

\end{document}